\tikzset{individu/.style={draw,thick}}
\theoremstyle{plain}
\newtheorem{theorem}{Theorem}[section]
\newtheorem{corollary}[theorem]{Corollary}
\newtheorem{lemma}[theorem]{Lemma}
\newtheorem{proposition}[theorem]{Proposition}
\theoremstyle{definition}
\theoremstyle{remark}
\newtheorem{remark}[theorem]{Remark}
\newcommand{\N}{\mathbb{N}}
\newcommand{\Z}{\mathbb{Z}}
\newcommand{\R}{\mathbb{R}}
\newcommand{\ind}[1]{\mathbf{1}_{\left\{#1\right\}}}
\newcommand{\floor}[1]{{\left\lfloor #1 \right\rfloor}}
\newcommand{\ceil}[1]{{\left\lceil #1 \right\rceil}}
\newcommand{\calC}{\mathcal{C}}
\numberwithin{equation}{section}
\DeclareMathOperator{\E}{\mathbf{E}}
\renewcommand{\P}{\mathbf{P}}
\DeclareMathOperator{\Var}{\mathbf{V}\mathrm{ar}}
\newcommand{\calT}{\mathcal{T}}
\newcommand{\calF}{\mathcal{F}}
\newcommand{\calL}{\mathcal{L}}
\newcommand{\wconv}[2]{{\underset{#1\to #2}{\Longrightarrow}}}
\renewcommand{\bar}[1]{\overline{#1}}
\newcommand{\T}{\mathbf{T}}
\renewcommand{\tilde}[1]{\widetilde{#1}}
\renewcommand{\hat}[1]{\widehat{#1}}
\renewcommand{\rho}{\varrho}
\renewcommand{\epsilon}{\varepsilon}
\title{Branching random walk with selection at critical rate}
\author{Bastien Mallein\footnote{LPMA, UPMC and DMA, ENS. Research partially supported by the ANR project MEMEMO2.}}
\date{\today}
\begin{document}

\maketitle

\begin{abstract}
We consider a branching-selection particle system on the real line. In this model the total size of the population at time $n$ is limited by $\exp\left(a n^{1/3}\right)$. At each step $n$, every individual dies while reproducing independently, making children around their current position according to i.i.d. point processes. Only the $\exp\left(a(n+1)^{1/3}\right)$ rightmost children survive to form the $(n+1)^\mathrm{th}$ generation. This process can be seen as a generalisation of the branching random walk with selection of the $N$ rightmost individuals, introduced by Brunet and Derrida in \cite{BrD97}. We obtain the asymptotic behaviour of position of the extremal particles alive at time $n$ by coupling this process with a branching random walk with a killing boundary.
\end{abstract}

\section{Introduction}
\label{sec:introduction}

Let $\calL$ be the law of a point process on $\R$. A branching random walk on $\R$ with reproduction law $\calL$ is a particle process defined as follows: it starts at time $0$ with a unique individual $\emptyset$ positioned at $0$. At time $1$, this individual dies giving birth to children which are positioned according to a point process of law $\calL$. Then at each time $k \in \N$, each individual in the process dies, giving birth to children which are positioned according to i.i.d. point processes of law $\calL$, shifted by the position of their parent. We denote by $\T$ the genealogical tree of the process, encoded with the Ulam-Harris notation. Note that $\T$ is a Galton-Watson tree. For a given individual $u \in \T$, we write $V(u) \in \R$ for the position of $u$, and $|u| \in \Z_+$ for the generation of $u$. If $u$ is not the initial individual, we denote by $\pi u$ the parent of $u$. The marked Galton-Watson tree $(\T,V)$ is the branching random walk on $\R$ with reproduction law $\calL$.

Let $L$ be a point process with law $\calL$. In this article, we assume the Galton-Watson tree $\T$ never get extinct and is supercritical, i.e.
\begin{equation}
  \label{eqn:reproduction}
  \P\left( \# L = 0 \right) = 0 \text{ } \text{ and } \text{ } \E\left[ \# L \right]>1.
\end{equation}
We also assume the branching random walk $(-V,\T)$ to be in the so-called boundary case, with the terminology of \cite{BiK04}:
\begin{equation}
  \label{eqn:critical}
  \E\left[ \sum_{\ell \in L} e^{\ell}\right]=1 ,\text{ } \E\left[ \sum_{\ell \in L} \ell e^{\ell} \right] = 0 \text{ } \text{ and } \text{ } \sigma^2 := \E\left[ \sum_{\ell \in L} \ell^2 e^{\ell} \right] < +\infty.
\end{equation}
Under mild assumptions, discussed in \cite[Appendix A]{Jaf12}, there exists an affine transformation mapping a branching random walk into a branching random walk in the boundary case. We impose the following integrability condition
\begin{equation}
  \label{eqn:integrability}
  \E\left[\sum_{\ell \in L} e^{\ell} \left[ \log \left( \sum_{\ell' \in L} e^{\ell'-\ell} \right) \right]^2 \right] < +\infty.
\end{equation}
Under slightly stronger integrability conditions, Aïdékon \cite{Aid13} proved that
\[
  \max_{|u|=n} V(u) + \frac{3}{2} \log n \wconv{n}{+\infty} W,
\]
where $W$ is a random shift of a Gumble distribution.

In \cite{BrD97}, Brunet and Derrida described a discrete-time particle system\footnote{Extended in \cite{BDMM07} to a particle system on $\R$.} on $\Z$ in which the total size of the population remains constant equal to $N$. At each time $k$, individuals alive reproduce in the same way as in a branching random walk, but only the $N$ rightmost individuals are kept alive to form the $(k+1)^\mathrm{th}$ generation. This process is called the $N$-branching random walk. They conjectured that the cloud of particles in the process moves at some deterministic speed $v_N$, satisfying
\[
  v_N = -\frac{\pi^2 \sigma^2}{2(\log N)^2} \left( 1 + \frac{(6 + o(1))\log \log N}{\log N} \right) \quad \text{as } N \to +\infty.
\]

Bérard and Gouéré \cite{BeG10} proved that in a $N$-branching random walk satisfying some stronger integrability conditions, the cloud of particles moves at linear speed $v_N$ on $\R$, i.e. writing $m^N_n, M^N_n$ respectively the minimal and maximal position at time $n$, we have
\[ \lim_{n \to +\infty} \frac{M^N_n}{n} = \lim_{n \to +\infty} \frac{m^N_n}{n} = v_N \text{ a.s. and } \lim_{N \to +\infty} (\log N)^2 v_N = -\frac{\pi^2 \sigma^2}{2}, \]
partially proving the Brunet-Derrida conjecture.

We introduce a similar model of branching-selection process. We set $\phi : \N \to \N$, and we consider a process with selection of the $\phi(n)$ rightmost individuals at generation $n$. More precisely we define $\T^\phi$ as a non-empty subtree of $\T$, such that $\emptyset \in \T^\phi$ and the generation $k \in \N$ is composed of the $\phi(k)$ children of $\{ u \in \T^\phi : |u|=k-1 \}$ with largest positions, with ties broken uniformly at random\footnote{Or in any other predictable fashion.}. The marked tree $(\T^\phi,V)$ is the branching random walk with selection of the $\phi(n)$ rightmost individuals at time $n$. We write
\begin{equation}
  \label{eqn:defineM}
  m^\phi_n = \min_{u \in \T^\phi, |u|=n} V(u) \quad \mathrm{and} \quad M^\phi_n = \max_{u \in \T^\phi, |u|=n} V(u).
\end{equation}
The main result of the article is the following.
\begin{theorem}
\label{thm:main}
Let $a>0$, we set $\phi(n) = \floor{\exp\left(a n^{1/3}\right)}$. Under assumptions \eqref{eqn:reproduction}, \eqref{eqn:critical} and \eqref{eqn:integrability} we have
\begin{equation}
  \label{eqn:convmin}
  M_n^\phi \sim_{n \to +\infty} -\frac{3\pi^2 \sigma^2}{2 a^2} n^{1/3} \quad \mathrm{a.s.}
\end{equation}
\begin{equation}
  \label{eqn:convmax}
  m_n^\phi \sim -\left(\frac{3\pi^2 \sigma^2}{2 a^2} n^{1/3} + a\right) n^{1/3} \quad \mathrm{a.s.}
\end{equation}
\end{theorem}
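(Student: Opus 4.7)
My plan is to follow the paper's announced strategy: couple $(\T^\phi,V)$ with auxiliary branching random walks killed below barriers of the form $k\mapsto -ck^{1/3}$. The heuristic behind \eqref{eqn:convmin} is the Brunet--Derrida formula applied instantaneously: at time $k$ the selected population has size $\phi(k)=e^{ak^{1/3}}$, so the cloud should drift locally at speed $-\pi^2\sigma^2/(2(\log\phi(k))^2)=-\pi^2\sigma^2/(2a^2k^{2/3})$; integration over $[0,n]$ yields exactly $-\tfrac{3\pi^2\sigma^2}{2a^2}n^{1/3}$, matching the theorem.

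The technical heart is a sharp asymptotic for the number $Z_n(c)$ of particles $u\in\T$ with $|u|=n$ whose ancestral positions $V(u_k)$ remain above $-ck^{1/3}$ for every $k\le n$. By the many-to-one lemma applied to the tilted random walk provided by \eqref{eqn:critical},
\[
\E[Z_n(c)]=\E\!\bigl[e^{-S_n}\ind{S_k\ge-ck^{1/3},\,\forall k\le n}\bigr],
\]
where $S$ is a centered random walk of variance $\sigma^2$. Ballot-type estimates for random walks constrained above a concave curve, combined with the exponential tilt $e^{-S_n}$, should yield $\log\E[Z_n(c)]\sim\alpha(c)n^{1/3}$ for an explicit increasing function $\alpha$; the coefficient $3\pi^2\sigma^2/2$ arises from the principal Dirichlet eigenvalue $\pi^2/2$ of $\partial_{xx}$ on the half-line, appearing in the self-similar spectral problem associated to the moving barrier. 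A second-moment argument, after truncating ancestral trajectories performing atypical excursions, should upgrade this to the almost-sure concentration $Z_n(c)=\exp(\alpha(c)n^{1/3}+o(n^{1/3}))$.

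Given this auxiliary estimate, I would close \eqref{eqn:convmin}--\eqref{eqn:convmax} by a two-sided sandwich. For the lower bounds, choosing $c$ just above the critical level $c^\star$ for which $\alpha(c^\star)=a$, the auxiliary step provides a subtree of the full BRW with at least $\phi(k)$ particles at each time $k\le n$, all above the chosen barrier; a careful coupling argument in the spirit of B\'erard--Gou\'er\'e for the $N$-BRW should show that this subtree persists under the $\phi$-selection, giving $M_n^\phi\ge-(\frac{3\pi^2\sigma^2}{2a^2}+\epsilon)n^{1/3}$, and analogously the matching lower bound at the bottom end of the cloud. For the upper bounds, if the extremes of $\T^\phi$ exceeded the claimed values, applying the estimate with a slightly tighter barrier would give strictly fewer than $\phi(n)$ survivors, contradicting $|\T_n^\phi|=\phi(n)$.

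The main obstacle is the sharp asymptotic for $Z_n(c)$. Pinning down the prefactor $3\pi^2\sigma^2/2$ requires an Airy-type spectral analysis for the half-line Laplacian under the self-similar scaling imposed by the $k^{1/3}$ barrier. The almost-sure concentration is delicate because of potentially heavy right tails of $\calL$, typically handled via a two-level truncation of ancestral paths (first above a linear curve, then above the concave one) in the spirit of the A\"id\'ekon--Shi refinement of the Biggins--Kyprianou boundary-case analysis.
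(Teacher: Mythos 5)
Your overall strategy (couple the selected process with branching random walks killed below a barrier, and control the killed processes by first and second moments via the many-to-one lemma) is indeed the paper's announced route, but the proposal leaves the quantitative core unproven and, more importantly, points at the wrong mechanism for it. The expectation $\E[Z_n(c)]=\E[e^{-S_n}\ind{S_k\ge -ck^{1/3},\,k\le n}]$ is not governed by ballot-type estimates for a one-sided constraint (those give polynomial, not $e^{-\mathrm{const}\,n^{1/3}}$, corrections), nor by a ``principal Dirichlet eigenvalue $\pi^2/2$ of the half-line Laplacian'' — the half-line Laplacian has no principal eigenvalue; $\pi^2/2$ is the eigenvalue of the \emph{unit interval}. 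In the paper the constant comes from the Mogul'ski\u\i{} small-deviation estimate for a walk confined between \emph{two} curves: the killing curve $f$ and an upper curve $g$ chosen to solve the integral equation \eqref{eqn:defineg}, $g_t=g_0-\frac{\pi^2\sigma^2}{2}\int_0^t\frac{ds}{(g_s-f_s)^2}$. The upper curve is legitimate because Lemma \ref{lem:frontier} shows that with probability $1-e^{-\delta n^{1/3}}$ no particle of the killed process ever crosses it, and the matching lower bound requires the second-moment estimate of Lemma \ref{lem:secondorder}, which only closes after truncating on the spine quantity $\xi(u_j)\le \delta n^{1/3}$ (this is where \eqref{eqn:integrability} enters, via Corollary \ref{cor:mogulskii_modified}). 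Without identifying this two-curve structure and the optimisation it encodes, the coefficient $3\pi^2\sigma^2/2$ (already the content of Theorem \ref{thm:fzfhs}) is not reachable; ``Airy-type spectral analysis'' is the wrong toolbox here.

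The sandwich itself is also stated with the inequalities in the wrong direction, and the degenerate features of $\phi(n)=\floor{e^{an^{1/3}}}$ are not addressed. To bound the selected process from \emph{below} by killed BRWs one must show the killed population stays \emph{below} the selection threshold at every generation (so that, via the monotone coupling of Lemma \ref{lem:coupling}, selection removes no more than killing does, as in Lemma \ref{lem:couplingfrombelow}); your claim that ``a subtree with at least $\phi(k)$ particles above the barrier persists under selection'' is false as stated, because rightmost particles of the full tree need not descend from previously selected individuals. Symmetrically, the upper bound needs an almost sure \emph{lower} bound on the killed population (Lemma \ref{lem:couplingfromabove}); your contradiction argument with $\#\{u\in\T^\phi:|u|=n\}=\phi(n)$ does not substitute for it. Finally, the effective selection exponent $h_t=at^{1/3}$ vanishes at $t=0$, so the comparison cannot be run from time $0$: the paper runs it on $[\epsilon n,n]$ with $h_t=a(t+\epsilon)^{1/3}$ and then removes $\epsilon$ by telescoping over the geometric times $\floor{\epsilon^j n}$ with a Ces\`aro argument; and the statement \eqref{eqn:convmax} for $m^\phi_n$ needs an extra regeneration step over the last $O(n^{1/3})$ generations (end of the proof of Theorem \ref{thm:Thbehaviour}) because the killed processes alone carry too few particles at the bottom of the cloud. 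None of these points is a routine detail, so as it stands the proposal is a plausible plan rather than a proof.
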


We prove Theorem \ref{thm:main} using a coupling between the branching random walk with selection and a branching random walk with a killing boundary, introduced in \cite{BeG10}. We also provide in this article the asymptotic behaviour of the extremal positions in a branching random walk with a killing boundary; and the asymptotic behaviour of the extremal positions in a branching random walk with selection of the $\floor{e^{h_{k/n}n^{1/3}}}$ at time $k \leq n$, where $h$ is a positive continuous function.

We consider in this article populations with $e^{a n^{1/3}}$ individuals on the interval of time $[0,n]$. This rate of growth is in some sense critical. More precisely in \cite{BDMM07}, the branching random walk with selection of the $N$ rightmost individuals is conjectured to typically behave at the time scale $(\log N)^3$. This observation has been confirmed by the results of \cite{BeG10,BBS13,Mai13}. Using methods similar to the ones developed here, or in \cite{BeG10}, one can prove that the maximal displacement in a branching random walk with selection of the $e^{a n^\alpha}$ rightmost individuals behaves as $-\frac{\pi^2 \sigma^2}{2(1-2\alpha)a^2} n^{1 - 2 \alpha}$ for $\alpha < 1/2$. If $\alpha>1/2$, it is expected that the behaviour of the maximal displacement in the branching random walk with selection is similar to the one of the classical branching random walk, of order $\log n$.

In this article, $c,C$ stand for positive constants, respectively small enough and large enough, which may change from line to line and depend only on the law of the processes we consider. Moreover, the set $\{|u| = n\}$ represents the set of individuals alive at the $n^\mathrm{th}$ generation in a generic branching random walk $(\T,V)$ with reproduction law $\calL$.

The rest of the article is organised as follows. In Section \ref{sec:lemmas}, we introduce the spinal decomposition of the branching random walk, the Mogul'ski\u\i{} small deviation estimate and lower bounds on the total size of the population in a Galton-Watson process. Using these results, we study in Section \ref{sec:killing_critical} the behaviour of a branching random walk with a killing boundary. Section \ref{sec:selection} is devoted to the study of branching random walks with selection, that we use to prove Theorem \ref{thm:main}.

\section{Some useful lemmas}
\label{sec:lemmas}
\subsection{The spinal decomposition of the branching random walk}

For any $a \in \R$, we write $\P_a$ for the probability distribution of $(\T,V+a)$ the branching random walk with initial individual positioned at $a$, and $\E_a$ for the corresponding expectation. To shorten notations, we set $\P=\P_0$ and $\E=\E_0$. We write $\calF_n = \sigma(u, V(u), |u| \leq n)$ for the natural filtration on the set of marked trees. Let $W_n = \sum_{|u|=n} e^{V(u)}$. By \eqref{eqn:critical}, we observe that $(W_n)$ is a non-negative martingale with respect to the filtration $(\calF_n)$. We define a new probability measure $\bar{\P}_a$ on $\calF_\infty$ such that for all $n \in \N$,
\begin{equation}
  \label{eqn:measurechange}
  \left.\frac{d\bar{\P}_a}{d\P_a}\right|_{\calF_n} = e^{-a} W_n.
\end{equation}
We write $\bar{\E}_a$ for the corresponding expectation and $\bar{\P}=\bar{\P}_0$, $\bar{\E} = \bar{\E}_0$. The so-called spinal decomposition, introduced in branching processes by Lyons, Pemantle and Peres in \cite{LPP95}, and extended to branching random walks by Lyons in \cite{Lyo97} gives an alternative construction of the measure $\bar{\P}_a$, by introducing a special individual with modified reproduction law.

Let $L$ be a point process with law $\calL$, we introduce the law $\hat{\calL}$ defined by
\begin{equation}
  \label{eqn:measurespine}
  \frac{d\hat{\calL}}{d\calL}(L) = \sum_{\ell \in L} e^{\ell}.
\end{equation}
We describe a probability measure $\hat{\P}_a$ on the set of marked trees with spine $(\T, V, w)$, where $(\T,V)$ is a marked tree, and $w = (w_n,n \in \N)$ is a sequence of individuals such that for any $n \in \N$, $w_n \in \T$, $|w_n|=n$ and $\pi w_n = w_{n-1}$. The ray $w$ is called the spine of the branching random walk.

Under law $\hat{\P}_a$, the process starts at time 0 with a unique individual $w_0=\emptyset$ located at position $a$. It generates its children according to a point process of law $\hat{\calL}$. Individual $w_1$ is chosen at random among the children $u$ of $w_0$ with probability proportional to $e^{V(u)}$. At each time $n \in \N$, every individual $u$ in the $n^\mathrm{th}$ generation die, giving independently birth to children according to the measure $\calL$ if $u \neq w_n$ and $\hat{\calL}$ if $u=w_n$. Finally, $w_{n+1}$ is chosen at random among the children $v$ of $w_n$ with probability proportional to $e^{V(v)}$.

\begin{proposition}[Spinal decomposition \cite{Lyo97}]
\label{pro:spinaldecomposition}
Under assumption \eqref{eqn:critical}, for all $n \in \N$, we have
\[ \left.\hat{\P}_a\right|_{\calF_n} = \left. \bar{\P}_a\right|_{\calF_n}. \]
Moreover, for any $u \in \T$ such that $|u|=n$,
\[ \hat{\P}_a\left( \left. w_n = u \right| \calF_n\right) = \frac{e^{V(u)}}{W_n}, \]
and $(V(w_n),n \geq 0)$ is a centred random walk starting from $a$ with variance $\sigma^ 2$
\end{proposition}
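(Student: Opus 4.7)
My plan is to verify $\hat{\P}_a|_{\calF_n} = \bar{\P}_a|_{\calF_n}$ by a direct telescoping computation of the likelihood of a marked tree with spine under $\hat{\P}_a$, and then to read off both the conditional law of $w_n$ and the random walk structure of $(V(w_n))$ as corollaries of that computation.

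First, I would write down the density of a configuration $(\T, V, w_0, \ldots, w_n)$ under $\hat{\P}_a$ relative to the unbiased object under $\P_a$. Two distortions act at each level $k < n$: the spine vertex $w_k$ reproduces according to $\hat{\calL}$ rather than $\calL$, contributing by \eqref{eqn:measurespine} a Radon–Nikodym factor $\sum_{v \,:\, \pi v = w_k} e^{V(v) - V(w_k)}$; and the successor $w_{k+1}$ is then sampled among the children of $w_k$ with probability $e^{V(w_{k+1}) - V(w_k)} / \sum_{v \,:\, \pi v = w_k} e^{V(v) - V(w_k)}$. The product of these two factors collapses to $e^{V(w_{k+1}) - V(w_k)}$, and multiplying over $k = 0, \ldots, n-1$ the weight telescopes to $e^{V(w_n) - a}$, independent of the cloud of non-spine children. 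The non-spine branches remain independent branching random walks of law $\calL$, so they contribute nothing beyond the $\P_a$-law of $(\T, V)$. Hence for any $\calF_n$-measurable bounded $F$ and any $u$ with $|u| = n$,
\[
\hat{\E}_a[F \, \indset{\{w_n = u\}}] = \E_a\bigl[F \, e^{V(u) - a}\bigr];
\]
summing over $u$ yields $\hat{\E}_a[F] = e^{-a}\, \E_a[F\, W_n] = \bar{\E}_a[F]$, which is the first identity, and dividing the two yields the conditional formula $\hat{\P}_a(w_n = u \mid \calF_n) = e^{V(u)}/W_n$.

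For the random walk claim, I would compute the one-step law of $V(w_1) - V(w_0)$ directly from the spinal construction. Unwinding $\hat{\calL}$ and the selection rule, for any bounded measurable $f$,
\[
\hat{\E}_a[f(V(w_1) - a)] = \E\Bigl[\sum_{\ell \in L} f(\ell)\, e^{\ell}\Bigr],
\]
which by \eqref{eqn:critical} is a probability distribution with mean $0$ and variance $\sigma^2$. Because the spinal construction regenerates at each generation (given $V(w_k)$, the subtree rooted at $w_k$ with spine $(w_k, w_{k+1}, \ldots)$ has the law of the process started at $V(w_k)$), the increments $V(w_{n+1}) - V(w_n)$ are i.i.d.\ with this law, so $(V(w_n))_{n \geq 0}$ is a centred random walk of variance $\sigma^2$ started from $a$. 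The only real obstacle lies in the first step: keeping the spine contributions cleanly separated from the non-spine bushes so that the telescoping is visible; once the joint weight has been identified as $e^{V(w_n) - a}$, the rest is mechanical.
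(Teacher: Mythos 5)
Your argument is correct, but note that the paper itself gives no proof of this proposition: it is quoted directly from Lyons \cite{Lyo97}, and only the many-to-one lemma is derived from it in the text. What you write is essentially the classical argument of the cited reference: the Radon--Nikodym factor $\sum_{v:\pi v=w_k}e^{V(v)-V(w_k)}$ from the biased reproduction of the spine cancels against the denominator of the selection probability $e^{V(w_{k+1})-V(w_k)}/\sum_{v:\pi v=w_k}e^{V(v)-V(w_k)}$, the product telescopes to $e^{V(w_n)-a}$, and summing over $u$ with $|u|=n$ recovers $e^{-a}W_n$, i.e.\ the density \eqref{eqn:measurechange}; since $e^{V(u)}/W_n$ is $\calF_n$-measurable, the conditional law of $w_n$ follows, and the one-step computation $\hat{\E}_a[f(V(w_1)-a)]=\E[\sum_{\ell\in L}f(\ell)e^{\ell}]$ together with \eqref{eqn:critical} and the regeneration of the construction along the spine gives the centred random walk with variance $\sigma^2$. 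The only point to present carefully, as you acknowledge, is the formalisation of the telescoping (most cleanly done by induction on $n$, conditioning on $\calF_n$ and $w_n=u$), but this is exactly how the standard proof proceeds, so there is no gap.
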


This proposition in particular implies the following result, often called in the literature the many-to-one lemma, which has been introduced for the first time by Kahane and Peyrière in \cite{KaP76,Pey74}, and links additive moments of the branching random walks with random walk estimates.
\begin{lemma}[Many-to-one lemma \cite{KaP76,Pey74}]
\label{lem:manytoone}
There exists a centred random walk $(S_n, n \geq 0)$, starting from $a$ under $\P_a$, with variance $\sigma^2$ such that for any $n \geq 1$ and any measurable non-negative function $g$, we have
\begin{equation}
  \label{eqn:manytoone}
  \E_a\left[ \sum_{|u|=n} g(V(u_1),\cdots V(u_n))\right] = \E_a\left[ e^{a-S_n} g(S_1,\cdots S_n) \right].
\end{equation}
\end{lemma}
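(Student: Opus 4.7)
The plan is to read the identity directly off Proposition \ref{pro:spinaldecomposition}. The candidate walk $(S_n)$ is simply the one whose law under $\P_a$ matches that of the spine process $(V(w_n))_{n \geq 0}$ under $\hat{\P}_a$; the proposition guarantees such a walk exists with mean $a$ and variance $\sigma^2$. It remains to express an additive functional over generation $n$ of the branching random walk as a single expectation along this spine.

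The key computation is the following. For any non-negative measurable $F$ on $\R^n$, I write trivially $F(V(w_1),\dots,V(w_n)) = \sum_{|u|=n} \ind{w_n=u}\, F(V(u_1),\dots,V(u_n))$. Conditioning on $\calF_n$ and using the second identity of Proposition \ref{pro:spinaldecomposition} gives
\[
\hat{\E}_a\bigl[F(V(w_1),\dots,V(w_n))\bigr]
= \hat{\E}_a\Bigl[\sum_{|u|=n} \tfrac{e^{V(u)}}{W_n}\, F(V(u_1),\dots,V(u_n))\Bigr].
\]
Now invoke the first identity $\hat{\P}_a|_{\calF_n} = \bar{\P}_a|_{\calF_n}$ and the change of measure \eqref{eqn:measurechange} to translate back to $\P_a$ at the cost of the density $e^{-a}W_n$; the two factors of $W_n$ cancel and I am left with
\[
\hat{\E}_a\bigl[F(V(w_1),\dots,V(w_n))\bigr]
= \E_a\Bigl[\sum_{|u|=n} e^{V(u)-a}\, F(V(u_1),\dots,V(u_n))\Bigr].
\]

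To finish, I specialise to $F(x_1,\dots,x_n) = e^{a-x_n} g(x_1,\dots,x_n)$: the factor $e^{V(u)-a}$ on the right then cancels exactly, returning the bare sum $\sum_{|u|=n} g(V(u_1),\dots,V(u_n))$, while the left-hand side becomes $\hat{\E}_a[e^{a-V(w_n)} g(V(w_1),\dots,V(w_n))]$, which equals $\E_a[e^{a-S_n} g(S_1,\dots,S_n)]$ by the definition of $(S_n)$. This is \eqref{eqn:manytoone}.

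There is no real obstacle here; the only step requiring care is the bookkeeping of the two densities, namely lining up the spine weight $e^{V(u)}/W_n$ with the martingale density $e^{-a}W_n$ so that $W_n$ disappears, and then neutralising the residual $e^{V(u)-a}$ by absorbing an $e^{a-x_n}$ factor into the test function. The non-negativity of $g$ and monotone convergence take care of the extension from bounded to general measurable $g$.
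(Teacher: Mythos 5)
Your proof is correct and follows essentially the same route as the paper: both rest on the spinal decomposition, using the change of measure $e^{-a}W_n$ together with the conditional law $\hat{\P}_a(w_n=u\mid\calF_n)=e^{V(u)}/W_n$ so that the two factors of $W_n$ cancel, and then identifying $(S_n)$ with the law of $(V(w_n))$ under $\hat{\P}_a$. Writing the computation from the spine side rather than from $\E_a$ upward is only a cosmetic reversal of the paper's three-line calculation.
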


\begin{proof}
We use Proposition \ref{pro:spinaldecomposition} to compute
\begin{align*}
  \E_a\left[ \sum_{|u|=n} g(V(u_1),\cdots V(u_n))\right]
  &= \bar{\E}_a \left[ \frac{e^{a}}{W_{n}} \sum_{|z|=n} g(V(u_1), \cdots V(u_n)) \right]\\
  &= \hat{\E}_a\left[ e^{a}\sum_{|u|=n} \ind{u =w_n} e^{-V(u)} g(V(u_1),\cdots V(u_n)) \right]\\
  &= \hat{\E}_a \left[ e^{a-V(w_n)} g(V(w_1), \cdots, V(w_n)) \right].
\end{align*}
Therefore we define the random walk $S$ under $\P_a$ as a process with the same law as $(V(w_n), n \geq 0)$ under $\hat{\P}_a$, which ends the proof.
\end{proof}

Using the many-to-one lemma, to compute the number of individuals in a branching random walk who stay in a well-chosen path, we only need to understand the probability for a random walk to stay in this path. This is what is done in the next section.

\subsection{Small deviation estimate and variations}

The following theorem gives asymptotic bounds for the probability for a random walk to have small deviations, i.e., to stay until time $n$ within distance significantly smaller than $\sqrt{n}$ from the origin. Let $(S_n, n \geq 0)$ be a centred random walk on $\R$ with finite variance $\sigma^2$. We assume that for any $x \in \R$, $\P_x(S_0=x) = 1$ and we set $\P=\P_0$.

\begin{theorem}[Mogul'ski\u\i{} estimate \cite{Mog74}]
\label{thm:mogulskii}
Let $f<g$ be continuous functions on $[0,1]$ such that $f_0<0<g_0$ and $(a_n)$ a sequence of positive numbers such that
\[ \lim_{n \to +\infty} a_n=+\infty \quad \mathrm{and} \quad \lim_{n \to +\infty} \frac{a_n^2}{n} = 0. \]
For any $f_1\leq x < y \leq g_1$, we have
\begin{equation}
  \label{eqn:mogulskii}
  \lim_{n \to +\infty} \frac{a_n^2}{n} \log \P\left[\frac{S_n}{a_n} \in [x,y], \frac{S_j}{a_n} \in \left[ f_{j/n}, g_{j/n}\right], j \leq n \right] = -\frac{\pi^2 \sigma^2}{2} \int_0^1 \frac{ds}{(g_s-f_s)^2}.
\end{equation}
\end{theorem}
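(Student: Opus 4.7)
The plan is to prove this via a discretization argument, reducing to a constant-barrier small deviation estimate on short subintervals.

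The core ingredient is the constant-barrier case. Namely, for any $\alpha < \beta$, any starting point $x \in (\alpha,\beta)$ and any integer sequence $m = m_n$ with $a_n^2/m \to 0$, I would show
\[
\P_{x a_n}\left[ \frac{S_j}{a_n} \in [\alpha, \beta],\ j \leq m \right] = \exp\left(-(1+o(1)) \frac{\pi^2 \sigma^2 m}{2 a_n^2 (\beta-\alpha)^2}\right).
\]
The upper bound follows from the spectral decomposition of the walk's transition operator restricted to the slab $[\alpha a_n, \beta a_n]$: by a Fourier-analytic expansion of the characteristic function at the origin (using the finite variance), the principal eigenvalue is $1 - \pi^2 \sigma^2/(2 a_n^2 (\beta-\alpha)^2) + o(a_n^{-2})$, matching the Brownian value via Donsker-type comparison. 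The matching lower bound is obtained by forcing $S$ to be well-inside $[(\alpha+\epsilon)a_n, (\beta-\epsilon)a_n]$ at dyadic times of diffusive spacing $\sim a_n^2$, then applying the Markov property and a local central limit theorem to bound each block probability from below by the ground-state eigenfunction (a sine profile), and finally letting $\epsilon \to 0$.

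The second step is a time discretization of $[0,1]$ into $K$ equal subintervals. Using uniform continuity of $f$ and $g$ on $[0,1]$, for any $\eta > 0$ one can sandwich the barriers between piecewise constant functions $f^K, g^K$ satisfying $\|f - f^K\|_\infty, \|g - g^K\|_\infty \leq \eta$ for $K$ large. The upper bound then comes from dropping the endpoint constraint and using the Markov property:
\[
\P\left[ \frac{S_j}{a_n} \in [f_{j/n}, g_{j/n}],\ j \leq n \right] \leq \prod_{k=0}^{K-1} \sup_z \P_z\left[ \frac{S_j}{a_n} \in [f^K_{k/K} - \eta, g^K_{k/K} + \eta],\ j \leq n/K \right],
\]
each factor controlled by the constant-barrier estimate. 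The lower bound comes from forcing the walk to pass through narrow windows at each discretization time $k n/K$ and staying inside $[f^K_{k/K} + \eta, g^K_{k/K} - \eta] a_n$ in between. In both cases one obtains a Riemann sum which converges as $n \to \infty$, $K \to \infty$, $\eta \to 0$ to $-\frac{\pi^2 \sigma^2}{2} \int_0^1 ds/(g_s-f_s)^2$ by dominated convergence (the integrand is bounded since $g-f$ is continuous and positive on the compact $[0,1]$).

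Finally, the endpoint condition $S_n/a_n \in [x,y]$ is handled in the last discretization block: conditionally on staying in the slab until time $n - n/K$, the walk's position at time $n$ concentrates (after rescaling) on the sinusoidal principal eigenfunction of the corresponding Laplacian, so requiring the terminal value to lie in a fixed subinterval $[x,y] \subset [f_1, g_1]$ costs only a multiplicative factor in $(0,1)$ independent of $n$, which is washed out by the $\frac{a_n^2}{n} \log$ normalization. The main obstacle I expect is obtaining the sharp constant $\pi^2 \sigma^2/2$ in the constant-barrier case; the delicate part is the upper bound on the principal eigenvalue of the discrete operator on an interval of width $\sim a_n$, which requires a quantitative Fourier-analytic expansion valid uniformly as $a_n \to \infty$ and does not follow directly from Donsker's invariance principle.
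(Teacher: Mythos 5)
First, a point of comparison: the paper does not prove Theorem \ref{thm:mogulskii} at all --- it is quoted from \cite{Mog74} --- so the relevant benchmark is the paper's proofs of its variants, Corollary \ref{cor:mogulskii} and Lemma \ref{lem:mogulskii_modified}. Their architecture (piecewise-constant approximation of the barriers, Markov property over blocks, a Riemann sum converging to $\int_0^1 (g_s-f_s)^{-2}\,ds$, a terminal window costing only a multiplicative constant that is killed by the $\frac{a_n^2}{n}\log$ normalisation) is the same skeleton as yours, and those outer layers of your argument are fine.

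The genuine gap is in your core ingredient, the constant-barrier estimate. Your route to it --- existence of a principal eigenvalue for the transition operator killed outside the slab $[\alpha a_n,\beta a_n]$, together with the two-term expansion $1-\frac{\pi^2\sigma^2}{2a_n^2(\beta-\alpha)^2}+o(a_n^{-2})$ uniformly as $a_n\to+\infty$, plus concentration of the terminal position on the sine eigenfunction --- is asserted rather than proved. For a general centred increment law with only a finite second moment (possibly asymmetric, possibly lattice), the killed operator is neither self-adjoint nor obviously compact, so even the existence of such a principal eigenvalue, let alone the uniform expansion, is a substantial piece of analysis that a formal Fourier expansion of the characteristic function at the origin does not deliver; you flag this yourself as the unresolved obstacle. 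Moreover, the premise that pushed you onto this hard road --- that the sharp constant ``does not follow directly from Donsker'' --- is not correct. Chain the Markov property over sub-blocks of length $r_n=\floor{A a_n^2}$, bound each factor by a supremum over the starting point, reduce that supremum to finitely many grid starting points at the price of enlarging the slab by $\delta$ (the device used in the proof of Corollary \ref{cor:mogulskii}), apply Donsker's theorem on each block, whose Brownian duration $A\sigma^2$ is macroscopic, and invoke the exact Brownian slab asymptotics from \cite{KaS91}; letting $A\to+\infty$ and then $\delta\to 0$ yields both the upper and the lower constant-barrier bounds with the constant $\frac{\pi^2\sigma^2}{2}$. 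This is exactly how the paper obtains \eqref{eqn:firstestimate} in the proof of Lemma \ref{lem:mogulskii_modified} (stated there for the lower bound; the same chaining gives the upper bound). As written, then, your proposal rests on an unproved --- and unnecessary --- spectral claim; it becomes a complete proof once that step is replaced by the block/Donsker argument.
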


In the rest of this article, we use some modifications of the Mogul'ski\u\i{} theorem, that we use later choosing $a_n=n^{1/3}$. We start with a straightforward corollary: the Mogul'ski\u\i{} theorem holds uniformly with respect to the starting point.
\begin{corollary}
\label{cor:mogulskii}
Let $f<g$ be continuous functions on $[0,1]$ such that $f_0<g_0$ and $(a_n)$ a sequence of positive numbers such that
\[
  \lim_{n \to +\infty} a_n=+\infty \quad \mathrm{and} \quad \lim_{n \to +\infty} \frac{a_n^2}{n} = 0.
\]
For any $f_1\leq x < y \leq g_1$, we have
\begin{equation}
  \label{eqn:cor_mogulskii}
  \lim_{n \to +\infty} \frac{a_n^2}{n} \log \sup_{z \in \R}\P_{za_n}\left[\frac{S_n}{a_n} \in [x,y], \frac{S_j}{a_n} \in \left[ f_{j/n}, g_{j/n}\right], j \leq n \right]
  = -\frac{\pi^2 \sigma^2}{2} \int_0^1 \frac{ds}{(g_s-f_s)^2}.
\end{equation}
\end{corollary}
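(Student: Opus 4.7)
The plan is to reduce Corollary \ref{cor:mogulskii} to Theorem \ref{thm:mogulskii} via a translation argument. Writing $S$ under $\P_{z a_n}$ as $z a_n + \tilde S$ with $\tilde S$ a walk starting from $0$, one has
\[
\P_{z a_n}\!\left[\tfrac{S_j}{a_n} \in [f_{j/n}, g_{j/n}],\, j \leq n,\, \tfrac{S_n}{a_n} \in [x,y]\right] = \P_0\!\left[\tfrac{\tilde S_j}{a_n} \in [f_{j/n} - z, g_{j/n} - z],\, j \leq n,\, \tfrac{\tilde S_n}{a_n} \in [x-z, y-z]\right],
\]
which is the probability of a small-deviation event with corridor of the \emph{same} width $g_s - f_s$. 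Moreover the left-hand side vanishes as soon as $z \notin [f_0, g_0]$, so the supremum effectively reduces to $z \in [f_0, g_0]$.

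For the lower bound I would pick a fixed $z_0 \in (f_0, g_0)$ (which exists since $f_0 < g_0$) and apply Theorem \ref{thm:mogulskii} to the shifted functions $\tilde f_s := f_s - z_0$, $\tilde g_s := g_s - z_0$ (which satisfy $\tilde f_0 < 0 < \tilde g_0$ and $\tilde f_1 \leq x-z_0 < y - z_0 \leq \tilde g_1$) and the shifted endpoint interval $[x - z_0, y - z_0]$. Since $\tilde g_s - \tilde f_s = g_s - f_s$ this directly produces the claimed asymptotic rate, and the trivial inequality $\sup_z \P_{z a_n}[\cdots] \geq \P_{z_0 a_n}[\cdots]$ supplies the lower half of \eqref{eqn:cor_mogulskii}.

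For the upper bound, fix $\epsilon > 0$ small enough that $g_s - f_s > 2\epsilon$ on $[0,1]$ and cover $[f_0,g_0]$ by finitely many intervals $I_1, \dots, I_K$ of length $2\epsilon$, centred at points $z_k \in (f_0, g_0)$. For $z \in I_k$ the inclusions $[f_s - z, g_s - z] \subseteq [f_s - z_k - \epsilon, g_s - z_k + \epsilon]$ and $[x-z,y-z] \subseteq [x-z_k-\epsilon, y-z_k+\epsilon]$ yield
\[
\P_{z a_n}[\cdots] \leq \P_{z_k a_n}\!\left[\tfrac{S_j}{a_n} \in [f_{j/n} - \epsilon, g_{j/n} + \epsilon],\, j \leq n,\, \tfrac{S_n}{a_n} \in [x - \epsilon, y + \epsilon]\right].
\]
Applying Theorem \ref{thm:mogulskii} to each of the $K$ points $z_k$ with the widened functions $f_s - \epsilon, g_s + \epsilon$ (the finite number $K$ contributes $0$ to the exponential rate) gives
\[
\limsup_{n \to +\infty} \frac{a_n^2}{n}\log \sup_{z \in \R} \P_{z a_n}[\cdots] \leq -\frac{\pi^2 \sigma^2}{2} \int_0^1 \frac{ds}{(g_s - f_s + 2\epsilon)^2};
\]
letting $\epsilon \to 0$ by dominated convergence closes the gap.

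The only genuine subtlety is that Theorem \ref{thm:mogulskii} is not a priori uniform in the starting point, so one cannot simply invoke it with a moving $z$. The $\epsilon$-grid argument above pays a small price by widening the corridor, in exchange for replacing the supremum over an uncountable family of probabilities by a maximum over a finite one, at which stage Theorem \ref{thm:mogulskii} applies pointwise; everything else is bookkeeping.
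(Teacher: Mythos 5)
Your proposal is correct and follows essentially the same route as the paper: the lower bound by fixing a single starting point in $(f_0,g_0)$ and applying Theorem \ref{thm:mogulskii} after a shift, and the upper bound by noting the probability vanishes for $z \notin [f_0,g_0]$, discretizing the starting points into a finite grid, widening the corridor by the mesh size, applying the theorem pointwise, and letting the mesh go to $0$. This is exactly the paper's $\delta$-grid argument, so there is nothing to add.
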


\begin{proof}
We observe that
\begin{multline*}
  \sup_{z \in \R}\P_{z a_n}\left[\frac{S_n}{a_n} \in [x,y], \frac{S_j}{a_n} \in \left[ f_{j/n}, g_{j/n}\right], j \leq n \right]\\
  \geq \P_{a_n\frac{f_0+g_0}{2}}\left[ \frac{S_n}{a_n} \in [x,y], \frac{S_j}{a_n} \in \left[ f_{j/n}, g_{j/n}\right], j \leq n \right].
\end{multline*}
Therefore, applying Theorem \ref{thm:mogulskii}, we have
\begin{equation*}
  \liminf_{n \to +\infty} \frac{a_n^2}{n} \log \sup_{z \in \R}\P_{za_n}\left[\frac{S_n}{a_n} \in [x,y], \frac{S_j}{a_n} \in \left[ f_{j/n}, g_{j/n}\right], j \leq n \right]
  \geq -\frac{\pi^2 \sigma^2}{2} \int_0^1 \frac{ds}{(g_s-f_s)^2}.
\end{equation*}

We choose $\delta > 0$, and set $M = \ceil{\frac{g_0-f_0}{\delta}}$. We observe that
\[\P_{z a_n}\left[\frac{S_n}{a_n} \in [x,y], \frac{S_j}{a_n} \in \left[ f_{j/n}, g_{j/n}\right], j \leq n \right]=0,\]
thus
\begin{align*}
  &\sup_{z \in \R}\P_{z a_n}\left[\frac{S_n}{a_n} \in [x,y], \frac{S_j}{a_n} \in \left[ f_{j/n}, g_{j/n}\right], j \leq n \right]\\
  =& \max_{0 \leq k \leq M-1} \sup_{z \in [f_0+k\delta, f_0+(k+1)\delta]} \P_{z a_n}\left[\frac{S_n}{a_n} \in [x,y], \frac{S_j}{a_n} \in \left[ f_{j/n}, g_{j/n}\right], j \leq n \right]\\
  \leq& \max_{0 \leq k \leq M-1} \P_{a_n(f_0 + k \delta)}\left[ \frac{S_n}{a_n} \in [x, y + \delta], \frac{S_j}{a_n} \in \left[ f_{j/n}, g_{j/n}+\delta\right], j \leq n \right].
\end{align*}
As a consequence, we have
\begin{equation*}
  \limsup_{n \to +\infty} \frac{a_n^2}{n} \log \sup_{z \in \R}\P_{za_n}\left[\frac{S_n}{a_n} \in [x,y], \frac{S_j}{a_n} \in \left[ f_{j/n}, g_{j/n}\right], j \leq n \right]
  \leq -\frac{\pi^2 \sigma^2}{2} \int_0^1 \frac{ds}{(g_s-f_s+\delta)^2}.
\end{equation*}
Letting $\delta \to 0$ ends the proof.
\end{proof}

We present a more involved result on enriched random walks, a useful toy-model to study the spine of the branching random walk.  The following lemma is proved using a method similar to the original proof of Mogul'ski\u\i{}.
\begin{lemma}[Mogul'ski\u\i{} estimate for spine]
\label{lem:mogulskii_modified}
Let $((X_j, \xi_j), j \in \N)$ be an i.i.d. sequence of random variables taking values in $\R \times \R_+$, such that 
\[
  \E(X_1) = 0 \text{ } \text{ and } \text{ }  \sigma^2 := \E(X_1^2) < +\infty.
\]
We write $S_n = \sum_{j=1}^n X_j$ and $E_n = \{ \xi_j \leq n, j \leq n\}$. Let $(a_n) \in \R_+^\N$ be such that
\[
  \lim_{n \to +\infty} a_n = +\infty, \text{ } \lim_{n \to +\infty} \frac{a_n^2}{n} = 0 \text{ } \text{ and } \text{ } \lim_{n \to +\infty} a_n^2 \P(\xi_1 \geq n) = 0.
\]
Let $f<g$ be two continuous functions. For all $f_0<x<y<g_0$ and $f_1 < x'<y'<g_1$, we have
\[
  \lim_{n \to +\infty} \frac{a_n^2}{n} \inf_{z \in [x,y]} \log \P_{z a_n}\left(\frac{S_n}{a_n} \in [x',y'] \frac{S_j}{a_n} \in \left[ f_{j/n} ,g_{j/n} \right], j \leq n, E_n\right) = -\frac{\pi^2 \sigma^2}{2} \int_0^1 \frac{ds}{(g_s-f_s)^2}.
\]
\end{lemma}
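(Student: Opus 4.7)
The plan is the usual split into upper and lower bounds. The upper bound is essentially free: dropping the event $E_n$ only enlarges the probability, so
\[
  \frac{a_n^2}{n}\inf_{z\in[x,y]}\log\P_{za_n}(\ldots, E_n) \;\leq\; \frac{a_n^2}{n}\log\sup_{z\in\R}\P_{za_n}\left(\frac{S_n}{a_n}\in[x',y'],\, \frac{S_j}{a_n}\in[f_{j/n},g_{j/n}],\, j\leq n\right),
\]
and the right-hand side converges to $-\tfrac{\pi^2\sigma^2}{2}\int_0^1(g_s-f_s)^{-2}\,ds$ by Corollary \ref{cor:mogulskii}, which provides the matching $\limsup$ inequality.

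For the lower bound I exploit the fact that $((X_j,\xi_j))_{j\geq 1}$ is i.i.d., so conditioning on $E_n$ preserves the i.i.d.\ structure of $(X_j)$. Setting $p_n := \P(\xi_1>n) \leq \P(\xi_1\geq n) = o(1/a_n^2)$ and $\Q := \P(\cdot\mid E_n)$, the factorization $\P(E_n) = (1-p_n)^n$ gives $-\log\P(E_n) = (1+o(1))np_n = o(n/a_n^2)$, which is negligible at our scale. Under $\Q$ the $X_j$'s remain i.i.d.\ with mean $\mu_n$ and variance $\sigma_n^2$, and the centredness $\E(X_1)=0$ together with Cauchy-Schwarz yields
\[
  |\mu_n| \;=\; \frac{|\E(X_1\ind{\xi_1>n})|}{1-p_n} \;\leq\; C\sqrt{p_n} \;=\; o(1/a_n),
\]
while dominated convergence gives $\sigma_n^2\to\sigma^2$. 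In particular, the residual drift cost $n\mu_n^2/(2\sigma^2)=o(n/a_n^2)$ is also negligible, so the problem reduces to a Mogul'ski\u\i{}-type estimate for a nearly centred triangular array of walks with variance converging to $\sigma^2$.

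To conclude, I would mimic the classical Mogul'ski\u\i{} lower-bound construction under $\Q$, uniformly in $z\in[x,y]$: fix $\delta>0$ and $K$ large, choose sub-functions $f\leq\tilde f<\tilde g\leq g$ close to $f,g$, and pick a $\calC^1$ path $\varphi:[0,1]\to\R$ strictly inside $(\tilde f,\tilde g)$ with $\varphi(0)=z$ and $\varphi(1)\in(x',y')$. Subdivide $[0,n]$ into $K$ pieces of length $m=\floor{n/K}$ and force, on each piece, the walk to end in an $\eta a_n$-window around $\varphi(k/K)a_n$ while remaining in the sub-tube $[\tilde f_{j/n},\tilde g_{j/n}]a_n$. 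By the strong Markov property and a local central limit theorem for $(X_j)$ under $\Q$, each piece contributes a factor of order $(a_n/\sqrt{m})\exp\bigl(-\bigl(\pi^2\sigma^2/(2(\tilde g_{k/K}-\tilde f_{k/K})^2)+o(1)\bigr)m/a_n^2\bigr)$. Multiplying the $K$ factors, taking logarithms, and sending $n\to\infty$ then $K\to\infty$ (with $\tilde f,\tilde g\to f,g$) converts the Riemann sum into the full integral; uniformity in $z\in[x,y]$ is automatic since the first-piece Gaussian factor has logarithm of order $a_n^2 K/n \to 0$ at our scale.

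The main obstacle is the local-CLT step under the $n$-dependent law $\Q$: the Gaussian approximation on each piece must be uniform in $n$, in the starting position, and stable under the small drift $\mu_n$. This uniformity follows from a Lindeberg condition for the conditioned increments $(X_j^{(n)})$, inherited from $\E(X_1^2)<+\infty$ via dominated convergence applied to $\E(X_1^2\ind{|X_1|>M}\mid\xi_1\leq n)$; combined with the classical Brownian-bridge small-deviation formula this handles each piece, and the remainder reduces to the bookkeeping standard in Mogul'ski\u\i{}'s original proof.
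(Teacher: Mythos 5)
Your upper bound coincides with the paper's: drop $E_n$ and invoke Corollary \ref{cor:mogulskii}. For the lower bound you take a genuinely different route to dispose of $E_n$: you condition on it, using $-\log\P(E_n)=(1+o(1))\,n\,\P(\xi_1>n)=o(n/a_n^2)$, and then work with the i.i.d.\ conditioned increments, whose mean $|\mu_n|\leq C\sqrt{\P(\xi_1>n)}=o(1/a_n)$ and variance $\sigma_n^2\to\sigma^2$ you compute correctly. The paper instead keeps the original law and removes the event blockwise, via $\P(\text{block},E_{r_n})\geq\P(\text{block})-r_n\P(\xi_1\geq n)$ on blocks of length $r_n=\floor{A a_n^2}$, which is exactly where the hypothesis $a_n^2\P(\xi_1\geq n)\to0$ enters; your conditioning is a legitimate, arguably cleaner, alternative, at the price of having to prove a Mogul'ski\u\i{} lower bound for a triangular array (the increment law now depends on $n$).

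The step that does not hold as written is the per-piece estimate. With pieces of macroscopic length $m=\floor{n/K}$, the claimed factor $(a_n/\sqrt m)\exp\bigl(-(\pi^2\sigma^2/(2(\tilde g-\tilde f)^2)+o(1))\,m/a_n^2\bigr)$ is not a consequence of a local central limit theorem: since $a_n\ll\sqrt m$, confinement in a strip of width $\asymp a_n$ over $m$ steps is itself a small-deviation event, and the sharp constant $\pi^2\sigma^2/2$ is precisely what Mogul'ski\u\i{}'s machinery is needed for, namely sub-blocks of length $\asymp A a_n^2$, an invariance principle on each sub-block, and the exact Brownian strip-confinement formula (this is how the paper proceeds, via Donsker and Karatzas--Shreve, before letting $A\to+\infty$). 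As stated, your per-piece claim assumes the constant-boundary case of the lemma rather than proving it; you must run that sub-block scheme under the conditioned law, for which a triangular-array Donsker theorem suffices (the Lindeberg condition you mention does hold). Relatedly, the ``residual drift cost $n\mu_n^2/(2\sigma^2)$'' is a Girsanov/exponential-tilting computation that is not available under a second-moment assumption only. The correct and simpler way to neutralise the drift is at the sub-block scale: over a block of length $\asymp A a_n^2$ the accumulated drift is $A a_n^2|\mu_n|=o(a_n)$, negligible compared with the strip width, and the endpoint windows at the block boundaries re-anchor the walk; over your macroscopic pieces the accumulated drift, of order $n|\mu_n|/K=o(n/(K a_n))$, need not be $o(a_n)$, so at that scale the drift cannot be dismissed without the tilting you cannot justify. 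With the blocking done at scale $a_n^2$ and the per-block estimate proved by Donsker plus the Brownian formula (transported to the conditioned law), your conditioning strategy does yield the lower bound, including the uniformity in $z\in[x,y]$, which is obtained as in the paper by discretising the starting window.
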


\begin{proof}
For any $z \in [x,y]$, we have
\begin{equation*}
  \P_{z a_n}\left(\frac{S_n}{a_n} \in [x',y'], \frac{S_j}{a_n} \in \left[ f_{j/n} ,g_{j/n} \right], j \leq n, E_n\right)
  \leq \sup_{h \in \R} \P_{ha_n} \left(\frac{S_j}{a_n} \in \left[ f_{j/n} ,g_{j/n}\right], j \leq n\right).
\end{equation*}
So the upper bound in this lemma is a direct consequence of Corollary \ref{cor:mogulskii}. We now consider the lower bound.

We suppose in a first time that $f$ and $g$ are two constants. Let $n \geq 1$, $f<x<y<g$ and $f<x'<y'<g$, we bound from below the quantity
\[
  P^{x',y'}_{x,y} (f,g)
   = \inf_{z \in [x,y]} \P_{za_n} \left( \frac{S_n}{a_n} \in [x',y'], \frac{S_j}{a_n} \in [f,g], j \leq n, E_n \right).
\]
Setting $A\in \N$ and $r_n = \floor{A a_n^2}$, we divide $[0,n]$ into $K = \floor{\frac{n}{r_n}}$ intervals of length $r_n$. For $k \leq K$, we write $m_k= k r_n$, and $m_{K+1}=n$. By restriction to the set of trajectories verifying $S_{m_k} \in [x'a_n,y'a_n]$, and applying the Markov property at time $m_{K}, \ldots m_1$, and restricting to trajectories which are at any time $m_k$ in $[x'a_n,y'a_n]$, we have
\begin{equation}
  \label{eqn:beta}
  P_{x,y}^{x',y'}(f,g) \geq \pi^{x',y'}_{x,y}(f,g) \left(\pi^{x',y'}_{x',y'}\right)^{K},
\end{equation}
writing
\[
  \pi^{x',y'}_{x,y}(f,g) = \inf_{z \in [x,y]} \P_{za_n} \left( \frac{S_{r_n}}{a_n} \in [x',y'], \frac{S_j}{a_n} \in [f,g], j \leq r_n, E_{r_n} \right).
\]

Let $\delta > 0$ chosen small enough such that $M = \ceil{\frac{y-x}{\delta}} \geq 3$ we observe easily that
\begin{align}
  \pi^{x',y'}_{x,y}(f,g) &\geq \min_{0 \leq m \leq M} \pi_{x+m\delta,x+(m+1)\delta}^{x',y'}(f, g)\nonumber\\
  &\geq \min_{0 \leq m \leq M} \pi^{x'-(m-1)\delta, y-(m+1)\delta}_{x,x}(f-(m-1)\delta, g-(m+1)\delta).\label{eqn:alpha}
\end{align}
Moreover, we have
\begin{align*}
  \pi^{x',y'}_{x,x}(f,g) &= \P_{xa_n}\left( \frac{S_{r_n}}{a_n} \in [x',y'], \frac{S_j}{a_n} \in [f,g], E_{r_n} \right)\\
  &\geq \P_{xa_n}\left( \frac{S_{r_n}}{a_n} \in [x',y'], \frac{S_j}{a_n} \in [f,g] \right) - r_n\P(\xi_1 \geq n).
\end{align*}
Using the Donsker theorem \cite{Don51}, $\left(\frac{S_\floor{r_n t}}{a_n}, t \in [0,1]\right)$ converges, under law $\P_{xa_n}$, as $n \to +\infty$ to a Brownian motion with variance $\sigma \sqrt{A}$ starting from $x$. In particular
\[
  \liminf_{n \to +\infty} \pi^{x',y'}_{x,x}(f,g) \geq \P_x(B_{A\sigma^2} \in (x',y'), B_u \in (f,g), u \leq A \sigma^2).
\]

Using \eqref{eqn:alpha}, we have
\[
  \liminf_{n \to +\infty} \pi^{x',y'}_{x,y}(f,g) \geq \min_{0 \leq m \leq M}\P_{x+m\delta}(B_{A\sigma^2} \in (x'+\delta,y'-\delta), B_u \in (f+\delta,g-\delta), u \leq A \sigma^2).
\]
As a consequence, recalling that $K \sim \frac{n}{A a_n^2}$, \eqref{eqn:beta} leads to
\begin{multline}
  \liminf_{n \to +\infty} \frac{a_n^2}{n} \log P_{x,y}^{x',y'}(f,g)\geq \\
  \frac{1}{A}\min_{0 \leq m \leq M} \log \P_{x+m\delta}(B_{A\sigma^2} \in (x'+\delta,y'-\delta), B_u \in (f+\delta,g-\delta), u \leq A \sigma^2). \label{eqn:gamma}
\end{multline}
According to Karatzas and Shreve \cite{KaS91}, probability $\P_x(B_t \in (x',y'), B_s \in (f,g), s \leq t)$ is exactly computable, and
\[ \lim_{t \to +\infty} \frac{1}{t} \log_x \P(B_t \in (x',y'), B_s \in (f,g), s \leq t) = -\frac{\pi^2}{2(g-f)^2}. \]
Letting $A \to +\infty$ then $\delta \to 0$, \eqref{eqn:gamma} becomes
\begin{equation}
  \liminf_{n \to +\infty} \frac{a_n^2}{n} \log P_{x,y}^{x',y'}(f,g) \geq - \frac{\pi^2 \sigma^2}{2(g-f)^2}.
  \label{eqn:firstestimate}
\end{equation}

We now take care of the general case. Let $f<g$ be two continuous functions such that $f_0<0 < g_0$. We write $h_t = \frac{f_t+g_t}{2}$. Let $\epsilon>0$ be such that
\[
  12 \epsilon \leq \inf_{t \in [0,1]} g_t - f_t
\]
and $A \in \N$ such that
\[ \sup_{|t-s| \leq \frac{2}{A}} |f_t-f_s| + |g_t- g_s|  + |h_t-h_s| \leq \epsilon. \]
For any $a \leq A$, we write $m_a = \floor{an/A}$,
\[
  I_{a,A} = [f_{a/A}+\epsilon, g_{a/A}-\epsilon] \quad \mathrm{and} \quad J_{a,A} = [h_{a/A}-\epsilon, h_{a/A}+\epsilon],
\]
except $J_{0,A} = [x,y]$ and $J_{A,A} = [x',y']$.

We apply the Markov property at times $m_{A-1},\ldots, m_1$, we have
\begin{multline*}
  \inf_{z \in J_{0,A}} \P_{z a_n}\left(\frac{S_j}{a_n} \in \left[ f_{j/n} ,g_{j/n} \right], j \leq n, E_{n}\right)\\
  \geq \prod_{a=0}^{A-1} \inf_{z \in J_{a,A}} \P_{za_n}\left( \frac{S_{m_{a+1}}}{a_n} \in J_{a+1,A}, E_{m_{a+1}-m_a}
    \frac{S_j}{a_n} \in I_{a,A},j \leq m_{a+1}-m_a \right).
\end{multline*}
Applying equation \eqref{eqn:firstestimate}, we conclude
\begin{multline*}
  \liminf_{n \to +\infty} \frac{a_n^2}{n} \log \inf_{z \in J_{0,A}} \P_{z a_n} \left( \frac{S_j}{a_n} \in \left[ f_{j/n} ,g_{j/n} \right] \quad \mathrm{and} \quad \xi_j \leq n, j \leq n\right)\\
  \geq -\frac{1}{A}\sum_{a=0}^{A-1} \frac{\pi^2 \sigma^2}{2(g_{a,A}-f_{a,A}-2\epsilon)^2}.
\end{multline*}
Letting $\epsilon \to 0$ then $A \to +\infty$, we conclude the proof.
\end{proof}

Lemma \ref{lem:mogulskii_modified} is extended in the following fashion, to take into account functions $g$ such that $g(0)=0$.

\begin{corollary}
\label{cor:mogulskii_modified}
Let $((X_j, \xi_j), j \in \N)$ be an i.i.d. sequence of random variables taking values in $\R \times \R_+$ such that
\[
  \E(X_1) = 0  \text{ } \text{ and } \text{ } \sigma^2 := \E(X_1^2) < +\infty.
\]
We write $S_n = \sum_{j=1}^n X_j$ and $E_n = \left\{ \xi_j \leq n, j \leq n \right\}$. Let $(a_n) \in \R_+^\N$ verifying
\[
  \lim_{n \to +\infty} a_n = +\infty, \text{ } \limsup_{n \to +\infty} \frac{a_n^3}{n} < +\infty \text{ } \text{ and } \text{ } \lim_{n \to +\infty} a_n^2 \P(\xi_1 \geq n) = 0.
\]
Let $f<g$ be two continuous functions such that $f_0<0$ and $\liminf_{t \to 0} \frac{g_t}{t}>-\infty$. For any $f_1 \leq x' < y' \leq g_1$, we have
\[
  \lim_{n \to +\infty} \frac{a_n^2}{n} \log \P\left( \frac{S_n}{a_n} \in [x',y'], \frac{S_j}{a_n} \in [f_{j/n}, g_{j/n}],j \leq n, E_n \right) = - \frac{\pi^2 \sigma^2}{2}\int_0^1 \frac{ds}{(g_s-f_s)^2}.
\]
\end{corollary}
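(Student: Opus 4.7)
The upper bound follows from Corollary \ref{cor:mogulskii} by an $\epsilon$-truncation. Fix $\epsilon>0$, drop $E_n$ together with the path constraints on $[0,\epsilon n]$, apply the Markov property at time $\epsilon n$, and invoke Corollary \ref{cor:mogulskii} uniformly in the starting point (after rescaling time to $[0,1]$). This yields
\[
\limsup_{n \to +\infty} \frac{a_n^2}{n}\log \P(\cdots)\leq -\frac{\pi^2\sigma^2}{2}\int_\epsilon^1\frac{ds}{(g_s-f_s)^2}.
\]
Because $f_0<0\leq g_0$ and $f,g$ are continuous, $g-f$ is bounded below by a positive constant on a neighbourhood of $0$, so the integrand is bounded near $0$ and the integral is continuous at $0$; letting $\epsilon\downarrow 0$ gives the desired upper bound.

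\textbf{Lower bound: splitting.} Lemma \ref{lem:mogulskii_modified} cannot be applied directly when $g_0=0$: the walk starts at $0$, which lies on the upper boundary of the rescaled constraint region. My plan is to split at $\tau_n=\floor{Aa_n^2}$ for a small constant $A$ and, by the Markov property at time $\tau_n$, to lower-bound $\P$ by $\P(A_1)\cdot\inf_{z\in[-\eta-\delta,-\eta]}\P_{za_n}(A_2)$, where $A_1$ is the event that the walk reaches $[-\eta-\delta,-\eta]a_n$ at time $\tau_n$ while staying inside $[f_{j/n}a_n,\,g_{j/n}a_n]$ and in $E_{\tau_n}$ (for $\eta\in(0,|f_0|)$ and $\delta>0$ small), and $A_2$ is the analogous event on $[\tau_n,n]$ ending in $[x',y']a_n$. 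Because $\tau_n/n\to 0$ (from $a_n^2/n\to 0$, itself a consequence of $a_n^3/n=O(1)$) and $-\eta\in(f_0,g_0)$, after shifting time Lemma \ref{lem:mogulskii_modified} applies in phase 2 with $\tilde f_t=f_{\tau_n/n+t(1-\tau_n/n)}$ and $\tilde g_t$ defined analogously; these converge uniformly to $f,g$, so phase 2 delivers the main rate $-\frac{\pi^2\sigma^2}{2}\int_0^1\frac{ds}{(g_s-f_s)^2}$.

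\textbf{Phase 1 estimate and main obstacle.} The critical step is showing $\frac{a_n^2}{n}\log\P(A_1)\to 0$; here $\limsup a_n^3/n<+\infty$ is used essentially. From $\liminf_{t\to 0}g_t/t>-\infty$ there exists $B>0$ with $g_t\geq -Bt$ near $0$, so $g_{j/n}a_n\geq -B\tau_n a_n/n=O(a_n^3/n)=O(1)$ for all $j\leq\tau_n$: the upper boundary stays bounded in absolute value throughout phase 1. Rescaling time by $\tau_n=\Theta(a_n^2)$ and space by $a_n$, Donsker's theorem gives convergence of the rescaled walk to $\sigma\sqrt{A}B$, and a discrete ballot/local-limit estimate of the form $\P(S_j\leq 0,\,j\leq N,\,S_N\in(b,b+db))\asymp(|b|\vee 1)N^{-3/2}\,db$ (for centred walks of variance $\sigma^2<+\infty$ and $|b|\lesssim\sqrt{N}$), applied with $N=\tau_n$ and $b\asymp-\eta a_n$, yields $\P(A_1)\geq c/a_n$. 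Combined with $a_n=O(n^{1/3})$, this gives $\frac{a_n^2}{n}\log\P(A_1)=O(a_n^2\log a_n/n)\to 0$. The $E_{\tau_n}$ factor costs nothing since $\tau_n\P(\xi_1\geq n)=O(a_n^2\P(\xi_1\geq n))\to 0$. The main obstacle is obtaining the ballot/local-limit estimate under only the moment condition $\E(X_1^2)<+\infty$ and no assumption on the lower tail of $X_1$; the naive Brownian analogue is degenerate (a Brownian bridge starting on the boundary almost surely exceeds it), so the estimate must be proved at the discrete level, for example by concatenating a short deterministic-type descent with a free bridge.
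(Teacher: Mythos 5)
Your upper bound is fine (in fact the hypotheses $f_0<0$ and $\liminf_{t\to 0} g_t/t>-\infty$ force $g_0\geq 0>f_0$, so Corollary \ref{cor:mogulskii} applies directly, without the $\epsilon$-truncation). The lower bound, however, has a genuine gap, and it sits exactly at the step you yourself flag as the main obstacle. Your phase-1 bound $\P(A_1)\geq c/a_n$ is a ballot/local-limit estimate for a walk started \emph{on} (or even above, since $g_{j/n}a_n$ may be a negative constant during phase 1) a barrier, forced to stay below it for $\asymp a_n^2$ steps and to end at depth $\eta a_n$; under only $\E(X_1^2)<+\infty$ this is precisely a "bridge below a barrier" estimate, and the repair you sketch — a short deterministic-type descent followed by a free bridge — does not prove it, because the free bridge is not constrained to remain below the barrier, so you are back to the very estimate you were trying to avoid. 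There is a second, quantitative, flaw in phase 1: you dispose of the $\xi$'s by saying $\tau_n\P(\xi_1\geq n)=O\bigl(a_n^2\P(\xi_1\geq n)\bigr)\to 0$, but since $\xi_j$ is not independent of $X_j$ this error must be \emph{subtracted} from $\P(A_1)$, which you claim is only of order $1/a_n\to 0$; the hypothesis gives $a_n^2\P(\xi_1\geq n)\to 0$, not $a_n^3\P(\xi_1\geq n)\to 0$, so the error term may swamp the main term. The strategy could presumably be completed with Bolthausen-type results on walks conditioned to stay negative plus a more careful treatment of the $\xi$'s, but as written the crucial estimate is assumed rather than proved.

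The paper's proof avoids all of this, and seeing how also clarifies what $\limsup_{n\to+\infty} a_n^3/n<+\infty$ is really for. Pick $x<y<0$ and $A>0$ with $p:=\P(X_1\in[x,y],\,\xi_1\leq A)>0$, and force only the first $N=\floor{\delta a_n}$ increments (not $\asymp a_n^2$ of them) to lie in $[x,y]$ with $\xi_j\leq A$. This event has probability at least $p^N$, it keeps the walk below $g_{j/n}a_n$ (which decreases by at most $d\,a_n/n\ll |y|$ per step, using $g_t\geq -dt$) and far above $f_{j/n}a_n$, it satisfies the $\xi$-constraint pathwise (so no subtraction of vanishing probabilities is needed), and it lands the walk at rescaled position in a compact subinterval of $(f_0,g_0)$ strictly below $0$. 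Its cost on the rate scale is $\frac{a_n^2}{n}N\log p=O\bigl(\delta\,a_n^3/n\bigr)=O(\delta)$ — this is exactly where $a_n^3=O(n)$ enters. One then applies Lemma \ref{lem:mogulskii_modified} uniformly over that compact starting set, with the time-shifted boundary functions, and lets $\delta\to 0$ to erase the phase-1 cost. In short: descending only to depth $\asymp\delta a_n$ by brute force is affordable and removes the need for any conditioned-walk local limit theorem.
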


\begin{proof}
Let $d>0$ be such that for all $t \in [0,1]$, $g(t) \geq - d t$. We set $x < y < 0$ and $A>0$ verifying $\P(X_1 \in [x,y], \xi_1 \leq A)>0$. For any $\delta>0$, we set $N = \floor{\delta a_n}$. Applying the Markov property at time $N$, for any $n \in \N$ large enough, we have
\begin{multline*}
  \P\left( \frac{S_n}{a_n} \in [x',y'], \frac{S_j}{a_n} \in \left[ f_{j/n}, g_{j/n} \right], j \leq n, E_n \right)
  \geq \P\left( S_j \in [jx, jy], j \leq N, E_N \right) \\
  \times \inf_{z \in [2 \delta x, \delta y/2]}\P_{z a_n}\left(\frac{S_{n-N}}{a_n} \in [x', y '], \frac{S_{j-N}}{a_n} \in \left[ f_{\frac{j+N}{n}},g_{\frac{j+N}{n}} \right], j \leq n-N, E_{n-N} \right)
\end{multline*}
with $\P\left( S_j \in [jx, jy], j \leq N, E_N \right) \geq \P\left( X_1 \in [x,y], \xi_1 \leq A \right)^N$. As $\limsup_{n \to +\infty} \frac{a_n^3}{n} < +\infty$, we have
\begin{multline*}
  \liminf_{n \to +\infty} \frac{a_n^2}{n} \log \P\left( \frac{S_n}{a_n} \in [x',y'], \frac{S_j}{a_n} \in [f_{j/n}, g_{j/n}],j \leq n, E_n \right)\\
  \geq \liminf_{n \to +\infty} \frac{a_n^2}{n}\inf_{z \in [2 \delta x, \delta y/2]}\P_{z a_n}\left(\frac{S_{n-N}}{a_n} \in [x', y '], \frac{S_{j-N}}{a_n} \in \left[ f_{\frac{j+N}{n}},g_{\frac{j+N}{n}} \right], j \leq n-N, E_{n-N} \right).
\end{multline*}
Consequently, applying Lemma \ref{lem:mogulskii_modified} and letting $\delta \to 0$, we have
\[
  \liminf_{n \to +\infty} \frac{a_n^2}{n} \log \P\left( \frac{S_n}{a_n} \in [x',y'], \frac{S_j}{a_n} \in [f_{j/n}, g_{j/n}],j \leq n, E_n \right) \geq - \frac{\pi^2 \sigma^2}{2} \int_0^1 \frac{ds}{(g_s-f_s)^2}.
\]
The upper bound is a direct consequence of Corollary \ref{cor:mogulskii}.
\end{proof}

\subsection{Lower bounds for the total size of the population in a Galton-Watson process}

We start this section by recalling the definition of a Galton-Watson process. Let $\mu$ be a law on $\Z_+$, and $(X_{k,n},(k,n)\in \N^2)$ an i.i.d. array of random variables with law $\mu$. The process $(Z_n, n \geq 0)$ defined inductively by
\[
  Z_0 = 1 \quad \mathrm{and} \quad Z_{n+1} = \sum_{k=1}^{Z_n} X_{k,n+1}
\]
is a Galton-Watson process with reproduction law $\mu$. The quantity $Z_n$ represents the size of the population at time $n$, and $X_{k,n}$ the number of children of the $k^\text{th}$ individual alive at time $n$. Galton-Watson processes have been extensively studied since their introduction by Galton and Watson\footnote{Independently from the seminal work of Bienaymé, who also introduced and studied such a process in 1847.} in 1874. The results we use here can been found in \cite{AtN04}.

We write
\[
  f : \begin{array}{rcl}
  [0,1] & \to & [0,1]\\
  s &\mapsto & \E\left[ s^{X_{1,1}} \right] = \sum_{k=0}^{+\infty} \mu(k)s^k.
  \end{array}
\]
We observe that for all $n\in \N$, $\E\left(s^{Z_n}\right) = f^n(s)$, where $f^n$ is the $n^\text{th}$ iterate of $f$. Moreover, if $m:=\E(X_{1,1})<+\infty$, then $f$ is a $\mathcal{C}^1$ strictly increasing convex function on $[0,1]$ that verifies 
\[f(0) = \mu(0), \quad f(1)=1 \quad \mathrm{and} \quad f'(1) = m.\]
We write $q$ the smallest solution of the equation $f(q)=q$. It is a well-known fact that $q$ is the probability of extinction of the Galton-Watson process, i.e., $\P(\exists n \in \N : Z_n = 0) = q$. Observe that $q<1$ if and only if $m>1$. If $m>1$, we also introduce $\alpha :=- \frac{\log f'(q)}{\log m} \in (0,+\infty]$.
\begin{lemma}
\label{lem:gw}
Let $(Z_n, n \geq 0)$ be a Galton-Watson process with reproduction law $\mu$. We write $b = \min\{k \in \Z_+ : \mu(k) > 0\}$, $m=\E(Z_1) \in (1,+\infty)$ and $q$ for the smallest solution of the equation $\E(q^{Z_1}) = q$. There exists $C>0$ such that for all $z \in (0,1)$ and $n \in \N$, we have
\[
  \P(Z_n \leq z m^n) \leq
  \begin{cases}
    q + C z^{\frac{\alpha}{\alpha + 1}} &\text{if } b=0\\
    C z^{\alpha} & \text{if } b=1\\
    \exp\left[ - C z^{-\frac{\log b}{\log m - \log b}} \right] & \text{if } b \geq 2.
  \end{cases}
\]
\end{lemma}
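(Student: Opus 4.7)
The plan is to apply the exponential Markov inequality to the Laplace transform of $Z_n/m^n$: for $\lambda > 0$,
\[
\P(Z_n \leq zm^n) \leq e^{\lambda z}\, \E(e^{-\lambda Z_n/m^n}) = e^{\lambda z}\, f^n(e^{-\lambda/m^n}),
\]
then optimise over $\lambda$ using a bound on $f^n$ that reflects the local behaviour of $f$ at its attracting fixed point $q\in[0,1)$. The functional equation $\phi_{n+1}(\lambda) = f(\phi_n(\lambda/m))$, with $\phi_n(\lambda) := f^n(e^{-\lambda/m^n})$, is the key recursion used to propagate such bounds through the $n$ compositions.

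For $b=1$ we have $q=0$ and $f(s) = \mu(1)s + O(s^2)$, so an induction based on $f(s) \leq \mu(1)s(1+Cs)$ near $0$ gives $f^n(s) \leq C\mu(1)^n s$ for $s$ in a fixed compact subset of $[0,1)$. This yields $\phi_n(\lambda) \leq C \lambda^{-\alpha}$ uniformly in $n$, with $\alpha = -\log \mu(1)/\log m$, and optimising $e^{\lambda z}\lambda^{-\alpha}$ at $\lambda = 1/z$ gives $Cz^\alpha$. For $b \geq 2$, $q=0$ and $f(s) \leq Cs^b$ near $0$; the iteration is no longer linear and a more careful analysis yields $-\log \phi_n(\lambda) \geq c \lambda^\gamma$ uniformly in $n$ for $\lambda$ in a suitable range, with $\gamma = \log b/\log m \in (0,1)$. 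The heuristic behind this is that in order for $Z_n \leq zm^n$ to hold, the first $k_z$ generations (with $k_z \sim \log(1/z)/\log(m/b)$) must essentially produce exactly $b$ children each, an event of probability of order $\mu(b)^{b^{k_z}/(b-1)}$, which is of order $\exp(-c z^{-\gamma/(1-\gamma)})$. Optimising $e^{\lambda z - c\lambda^\gamma}$ at $\lambda$ of order $z^{-1/(1-\gamma)}$ reproduces this exponent $\gamma/(1-\gamma) = \log b/(\log m - \log b)$.

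For $b=0$ we have $q>0$, and $|f(s) - q| = f'(q)|s-q| + O(|s-q|^2)$, so iterating, $f^n$ contracts towards $q$ at geometric rate $f'(q)^n$. This gives $\phi_n(\lambda) \leq q + C \lambda^{-\alpha}$ uniformly in $n$, with $\alpha = -\log f'(q)/\log m$. Markov yields $e^{\lambda z}(q + C\lambda^{-\alpha})$; to avoid inflating the $q$-term by $e^{\lambda z}$ one restricts to $\lambda z \leq 1$, so that $e^{\lambda z} \leq 1 + O(\lambda z)$ and the bound becomes $q + Cq\lambda z + C\lambda^{-\alpha}$. Balancing $q\lambda z$ against $\lambda^{-\alpha}$ at $\lambda$ of order $z^{-1/(\alpha+1)}$ produces the claimed $q + Cz^{\alpha/(\alpha+1)}$; the scaling constraint $\lambda z \leq 1$ is precisely the origin of the weaker exponent $\alpha/(\alpha+1)$ rather than $\alpha$.

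The principal obstacle is making the bound on $f^n(e^{-\lambda/m^n})$ uniform in $n$: classical asymptotics concern the Laplace transform of the martingale limit $W = \lim Z_n/m^n$ as $\lambda \to \infty$, whereas here one needs a bound valid at every finite $n$ in the relevant range of $\lambda$. This is achieved by tracking the iteration of $f$ from the argument $e^{-\lambda/m^n}$, which is close to $1$ when $\lambda/m^n$ is small: the first iterations behave linearly, moving the argument towards $q$ at rate $m$ per step, until it enters a neighbourhood of $q$ where the subsequent iterations contract at the local rate $f'(q)$. Composing these two regimes produces the desired uniform bound in each of the three cases.
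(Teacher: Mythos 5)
Your proposal is correct and follows essentially the same route as the paper: a Markov--Chernoff bound $\P(Z_n\le zm^n)\le s^{-zm^n}f^n(s)$ with $1-s\asymp \lambda m^{-n}$, the exponents coming from the behaviour of $f$ near $1$ (expansion at rate $m$) and near $q$ (contraction at rate $f'(q)$, or doubly exponential decay when $b\ge 2$), followed by the same optimisation in each of the three cases. The only difference is that the paper takes $s=s_{k-n}$ on the backward orbit $s_j=f^j(s_0)$ of a reference point, so that $f^n(s)=s_k$ is known exactly from one-sided asymptotics of the orbit and the uniformity-in-$n$ issue you identify as the principal obstacle never arises; optimising over the integer $k$ is precisely your optimisation over $\lambda\asymp m^k$.
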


\begin{remark}
One may notice that these estimates are in fact tight, under some suitable integrability conditions, uniformly in large $n$, as $z \to 0$. To obtain a lower bound, it is enough to compute the probability for a Galton-Watson tree to remain as small as possible until some time $k$ chosen carefully, then reproduce freely until time $n$. A more precise computation of the left tail of the Galton-Watson process can be found in \cite{FlW07}.
\end{remark}

\begin{proof}
We write $s_0= \frac{q+1}{2}$, and for all $k \in \Z$, $s_k = f^k(s_0)$, where negative iterations are iterations of $f^{-1}$. Using the properties of $f$, there exists $C_->0$ such that $1-s_k \sim_{k \to -\infty} C_- m^{k}$. Moreover, if $\mu(0) + \mu(1)>0$, there exists $C_+>0$ such that $s_k - q \sim_{k \to +\infty} C_+ f'(q)^k$. Otherwise,
\[
  s_k = f^{(b)}(0)^{\frac{b^k}{b-1} + o(b^k)} \quad \text{as } k \to +\infty
\]
where $f^{(b)}(0) = b! \mu(b)$ is the $b^\mathrm{th}$ derivative of $f$ at point 0.

Observe that for all $z < m^{-n}$, we have $\P(Z_n \leq z m^n) = \P(Z_n=0) \leq 1$. Therefore, we always assume in the rest of the proof that $z \geq m^{-n}$. By the Markov inequality, we have, for all $z \in (m^{-n},1)$ and $s \in (0,1)$,
\[
  \P(Z_n \leq z m^n) = \P(s^{Z_n} \geq s^{z m^n}) \leq \frac{\E(s^{Z_n})}{s^{z m^n}} =\frac{f^n_s}{s^{z m^n}}.
\]
In particular, for $s=s_{k-n}$, we have $\P(Z_n \leq z m^n) \leq \frac{s_k}{\left(s_{k-n}\right)^{z m^n}}$. The rest of the proof consists in choosing the optimal $k$ in this equation, depending on the value of $b$. 

If $b=0$, we choose $k = \frac{-\log z}{\log m - \log f'(q)}$ which grows to $+\infty$ as $z \to 0$, while $k \leq n\frac{\log m}{\log m - \log f'(q)}$ so $k-n \to -\infty$. As a consequence, there exists $c>0$ such that for all $n \geq 1$ and $z \geq m^{-n}$,
\[
  \left(s_{k-n}\right)^{-zm^{n}} \leq \exp\left( C z m^k\right).
\]
As $\lim_{z \to 0} zm^k= 0$, we conclude that there exists $C>0$ such that for all $n \geq 1$ and $z \geq m^{-n}$,
\[
  \P(Z_n \leq z m^n) \leq q + C f'(q)^\frac{-\log z}{\log m - \log f'(q)} + C z m^k = q + C z^{-\frac{\log f'(q)}{\log m - \log f'(q)}} = q + C z^\frac{\alpha}{\alpha + 1}.
\]

Similarly, if $b=1$, then $q=0$ and $f'(0) = \mu(1)$. We set $k = \frac{-\log z}{\log m}$. There exists $C>0$ such that for all $n \geq 1$ and $z \geq m^{-n}$, we have
\[
  \P(Z_n \leq z m^n) \leq C \mu(1)^{- \frac{\log z}{\log m}} \leq C z^{- \frac{\log \mu(1)}{\log m}}.
\]
Finally, if $b \geq 2$, we choose $k = - \frac{\log z}{\log m - \log b}$, there exists $c>0$ (small enough) such that
\[
  \P(Z_n \leq z m^n) \leq \exp\left[ - c z^{-\frac{\log b}{\log m - \log b}} \right],
\]
which ends the proof.
\end{proof}

Lemma \ref{lem:gw} is used to obtain a lower bound on the size of the population in a branching random walk above a given position.
\begin{lemma}
\label{lem:lbsizepop}
Under assumptions \eqref{eqn:reproduction}, there exist $a>0$ and $\rho>1$ such that a.s. for $n \geq 1$ large enough
\[
  \# \left\{ |u| = n : \forall j \leq n, V(u_j) \geq - n a \right\} \geq \rho^n.
\]
\end{lemma}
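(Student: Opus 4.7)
The plan is to locate inside the branching random walk $\T$ a pruned Galton--Watson subtree which is supercritical and whose individuals automatically stay above the linear barrier $-an$. By monotone convergence and \eqref{eqn:reproduction}, fix $a_0>0$ such that
\[
  \tilde m := \E\!\left[\#\{\ell\in L:\ell\ge -a_0\}\right] > 1,
\]
and let $\T^{(a_0)}\subset\T$ be the subtree obtained by pruning every edge of displacement strictly less than $-a_0$. Then $\T^{(a_0)}$ is a Galton--Watson tree of mean $\tilde m>1$ with extinction probability $\tilde q\in[0,1)$, and every $u\in\T^{(a_0)}$ with $|u|=n$ satisfies $V(u_j)\ge -a_0 j\ge -a_0 n$ for all $j\le n$. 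Applying Lemma~\ref{lem:gw} to $\T^{(a_0)}$ with $z=(\rho/\tilde m)^n$ gives, for any $\rho\in(1,\tilde m)$,
\[
  \P\bigl(\#\{u\in\T^{(a_0)}:|u|=n\}\le\rho^n\bigr)\le\tilde q+C(\rho/\tilde m)^{n\alpha/(\alpha+1)},
\]
and combined with the standard exponential decay $|\P(\T^{(a_0)}\text{ extinct by time }n)-\tilde q|=O(\gamma^n)$ this makes $\P(1\le\#\{u\in\T^{(a_0)}:|u|=n\}\le\rho^n)$ summable in $n$, so Borel--Cantelli yields $\#\{u\in\T^{(a_0)}:|u|=n\}\ge\rho^n$ a.s.\ for $n$ large on the event $\{\T^{(a_0)}\text{ survives}\}$.

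The second step removes the possible extinction of $\T^{(a_0)}$ by launching infinitely many independent copies. Conditional on $\calF_N$, the subtrees rooted at the individuals $u$ with $|u|=N$ are independent branching random walks shifted by $V(u)$, so the pruned subtrees $\T^{(a_0),u}$ survive independently each with probability $1-\tilde q$. Hence
\[
  \P\bigl(\T^{(a_0),u}\text{ extinct for every }|u|=N\bigr) = \E\bigl[\tilde q^{\#\{|u|=N\}}\bigr] \longrightarrow 0
\]
as $N\to+\infty$, because supercriticality of $\T$ and Lemma~\ref{lem:gw} together give $\#\{|u|=N\}\to+\infty$ a.s. Intersecting over $N$, the event ``no launched copy ever survives'' has probability at most $\inf_N\E[\tilde q^{\#\{|u|=N\}}]=0$, so almost surely there is a random pair $(N^*,u^*)$ with $|u^*|=N^*$ whose launched copy $\T^{(a_0),u^*}$ survives.

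The proof is finished by concatenation. Inside $\T^{(a_0),u^*}$ the first step produces at least $\rho^{n-N^*}$ individuals at relative generation $n-N^*$ almost surely for $n$ large. Any such descendant $v$ with $|v|=n$ satisfies $V(v_j)\ge\min_{i\le N^*}V(u^*_i)$ for $j\le N^*$ and $V(v_j)\ge V(u^*)-a_0(j-N^*)\ge V(u^*)-a_0 n$ for $j>N^*$. Since $N^*$, $V(u^*)$ and $\min_{i\le N^*}V(u^*_i)$ are a.s.\ finite, for any fixed $a>a_0$ both right-hand sides exceed $-an$ once $n$ is large enough, whence $\#\{|u|=n:V(u_j)\ge -an,\ j\le n\}\ge\rho^{n-N^*}\ge(\rho')^n$ a.s.\ for $n$ large, for any $\rho'\in(1,\rho)$. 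The principal obstacle is the second step: converting the conditional independence of the launched subtrees, together with the a.s.\ unboundedness of $\#\{|u|=N\}$, into a genuine a.s.\ (not merely high-probability) existence of a surviving copy. A secondary technical point is extracting from \eqref{eqn:integrability} the moment control needed for the quantitative extinction-rate bound used in the first step.
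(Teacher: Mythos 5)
Your proposal is correct and takes essentially the same route as the paper: prune the branching random walk along a linear barrier $-a_0 j$ to obtain a coupled supercritical Galton--Watson process, use Lemma~\ref{lem:gw} with $z=(\rho/\tilde{m})^n$ plus Borel--Cantelli to get $Z_n\ge\rho^n$ eventually on survival, and then remove the conditioning by locating at an a.s.\ finite random generation a surviving pruned copy, whose finite spatial offset is absorbed by enlarging the slope $a$. The only differences are implementation details (the paper uses the reduced tree of individuals with infinite lines of descent and a geometric search along the leftmost ray, whereas you use the geometric convergence of $\P(Z_n=0)$ to $\tilde{q}$ and launch copies from all generation-$N$ individuals), and the ``principal obstacle'' you flag is in fact already settled by your bound $\P(\text{no copy ever survives})\le\inf_N\E\bigl[\tilde{q}^{\#\{|u|=N\}}\bigr]=0$, up to the routine remark that the eventual-growth estimate holds simultaneously for the countably many possible roots, and that the pruned offspring mean must be checked (or truncated) to be finite before invoking Lemma~\ref{lem:gw}, exactly as in the paper.
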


\begin{proof}
Using monotone convergence and \eqref{eqn:reproduction}, we have
\[\lim_{P \to +\infty} \lim_{a \to +\infty} \E\left[ P \wedge \sum_{|u|=1} \ind{V(u) \geq -a} \right] = \E\left[ \sum_{|u|=1} 1\right]> 1.\]
Hence there exists $a>0$ and $P \in \N$ such that $\rho_1 := \E\left[ P \wedge \sum_{|u|=1} \ind{V(u) \leq a} \right] >1$. We set $N  = P \wedge \sum_{|u|=1} \ind{V(u) \geq -a}$ --note that $\E(N) < +\infty$. Observe that a Galton-Watson process $Z$ with reproduction law $N$ can be coupled with the branching random walk $(\T,V)$ such that the following holds
\[
  \sum_{|u|=n} \ind{\forall j \leq n, V(u_j) \geq -ja} \geq Z_n.
\]
We write $p := \P(\forall n \in \N, Z_n>0)>0$ for the survival probability of this Galton-Watson process.

For $n \in \N$, we write $\tilde{Z}_n$ for the number of individuals with an infinite number of descendants. Conditionally on the survival of $Z$, the process $(\tilde{Z}_n, n \geq 0)$ is a supercritical Galton-Watson process that survives almost surely (see e.g. \cite{AtN04}). Applying Lemma \ref{lem:gw}, there exists $\rho>1$ such that
\[
  \P(\tilde{Z}_n \leq \rho^n) \leq \rho^{-n}.
\]
Applying the Borel-Cantelli lemma, a.s. for any $n \geq 1$ large enough $\tilde{Z}_n \geq \rho^n$.

We introduce a sequence of individuals $(u_n) \in \T^\N$ such that $|u_n|=n$, $u_0 = \emptyset$ and $u_{n+1}$ is the leftmost child of $u_n$, with ties broken uniformly at random. We write $q = \P(N \geq 2)$ for the probability that $u_n$ has at least two children, both of them above $-a$. We introduce the random time $T$ defined as the smallest $k \in \N$ such that the second leftmost child $v$ of $u_k$ is above $-a$, and the Galton-Watson process coupled with the branching random walk rooted at $v$ survives. We observe that $T$ is stochastically bounded by a geometric random variable with parameter $pq$, and that conditionally on $T$, the Galton-Watson tree that survives has the same law as $\tilde{Z}$.

Thanks to these observations, we note that $T<+\infty$ and $\inf_{j \leq T} V(u_j)>-\infty$. For any $n \geq 1$ large enough such that $T<n$ and $\inf_{j \leq T} V(u_j) \geq -na$ we have
\[
  \# \left\{ u \in \T : |u|=2n, \forall j \leq n, V(u_j) \geq -3na \right\} \geq \rho^n,
\]
as desired.
\end{proof}

\section{Branching random walk with a killing boundary at critical rate}
\label{sec:killing_critical}

In this section, we study the behaviour of a branching random walk on $\R$ in which individuals below a given barrier are killed. Given a continuous function $f \in \calC([0,1])$ such that $\limsup_{t \to 0} \frac{f_t}{t}<+\infty$ and $n \in \N$, for any $k \leq n$ every individual alive at generation $k$ below level $f_{k/n} n^{1/3}$ are removed, as well as all their descendants. Let $(\T,V)$ be a branching random walk, we denote by
\[
  \T^{(n)}_f = \left\{ u \in \T : |u| \leq n, \forall j \leq |u|, V(u_j) \geq n^{1/3} f(k/n) \right\},
\]
and note that $\T^{(n)}_f$ is a random tree. The process $(\T^{(n)}_f,V)$, called branching random walk with a killing boundary, has been introduced in \cite{AiJ11,Jaf12}, where a criterion for the survival of the process is obtained. In this section, we study the asymptotic behaviour of $(\T^{(n)}_f,V)$. More precisely, we compute the probability that $\T^{(n)}_f$ survives until time $n$, and provide bounds on the size of the population in $\T^{(n)}_f$ at any time $k \leq n$.

To obtain these estimates, we first find a function $g$ such that with high probability, no individual alive at generation $k$ in $\T^{(n)}_f$ is above $n^{1/3} g_{k/n}$. We compute in a second time the first and second moments of the number of individuals in $\T$ that stay at any time $k \leq n$ between $n^{1/3} f_{k/n}$ and $n^{1/3} g_{k/n}$.

With a careful choice of functions $f$ and $g$, one can compute the asymptotic behaviour of the consistent maximal displacement at time $n$, which is \cite[Theorem 1]{FaZ10} and \cite[Theorem 1.4]{FHS12}; or the asymptotic behaviour as $\epsilon \to 0$ of the probability there exists an individual in the branching random walk staying at any time $n \in \N$ above $-\epsilon n$, which is \cite[Theorem 1.2]{GHS11}. We present these results respectively in Theorem \ref{thm:fzfhs} and Theorem \ref{thm:ghs}, with weaker integrability conditions than in the seminal articles.

\subsection{Number of individuals in a given path}
\label{subsec:computations}

For any two continuous functions $f < g$, we denote by
\[
  H_t(f,g) = \frac{\pi^2 \sigma^2}{2} \int_0^t \frac{ds}{(g_s-f_s)^2}.
\]
For $n \geq 1$ and $k \leq n$, we write $I^{(n)}_k = [f_{k/n} n^{1/3}, g_{k/n} n^{1/3}]$. We compute in a first time the number of individuals in $\T^{(n)}_f$ that crosses for the first time at some time $k \leq n$ the frontier $g_{k/n}n^{1/3}$. We set
\[
  Y^{(n)}_{f,g} = \sum_{u \in \T^{(n)}_f} \ind{V(u) > g_{|u|/n}n^{1/3}} \ind{V(u_j) \leq g_{j/n} n^{1/3}, j < |u|}.
\]

\begin{lemma}
\label{lem:frontier}
Let $f \leq g$ such that $f_0 \leq 0 \leq g_0$. Under assumptions \eqref{eqn:reproduction} and \eqref{eqn:critical},
\begin{equation}
  \label{eqn:frontier}
  \limsup_{n \to +\infty} n^{-1/3} \log \E\left[ Y^{(n)}_{f,g}\right] \leq -\inf_{t \in [0,1]} g_t + H_t (f,g).
\end{equation}
\end{lemma}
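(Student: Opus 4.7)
The plan is to decompose $Y^{(n)}_{f,g}$ as a sum over the generation $k$ at which an individual first crosses the upper curve, then apply the many-to-one lemma to turn the expectation into a random walk probability, and finally apply the Mogul'ski\u\i{} upper bound. Write
\[
  Y^{(n)}_{f,g} = \sum_{k=0}^{n} \sum_{|u|=k} \ind{V(u) > g_{k/n} n^{1/3}}\ind{V(u_j) \in [f_{j/n} n^{1/3}, g_{j/n} n^{1/3}], j < k}.
\]
Applying Lemma \ref{lem:manytoone} to the $k$-th term gives
\[
  E_k := \E\bigl[ e^{-S_k} \ind{S_k > g_{k/n} n^{1/3}, S_j \in [f_{j/n} n^{1/3}, g_{j/n} n^{1/3}], j < k}\bigr],
\]
and since $S_k > g_{k/n} n^{1/3}$ on the event in question, I bound $e^{-S_k}$ by $e^{-g_{k/n} n^{1/3}}$ and pull it out.

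Next I estimate the remaining random walk probability. Dropping the constraint at time $k$ (or keeping it, with appropriate rescaling) and reparametrizing the functions $f$ and $g$ on $[0,k/n]$, I want to apply the upper bound from Corollary \ref{cor:mogulskii} (with $a_n = n^{1/3}$, so $a_n^2/n = n^{-1/3} \to 0$) to obtain
\[
  \P\bigl(S_j \in [f_{j/n} n^{1/3}, g_{j/n} n^{1/3}], j \leq k\bigr) \leq \exp\bigl(-n^{1/3}(H_{k/n}(f,g) - \eta)\bigr)
\]
for arbitrarily small $\eta > 0$ and all $n$ large enough. This combines with the factor $e^{-g_{k/n} n^{1/3}}$ to give
\[
  E_k \leq \exp\bigl(-n^{1/3}(g_{k/n} + H_{k/n}(f,g) - \eta)\bigr).
\]

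Summing the $n+1$ contributions and taking $n^{-1/3} \log$ then produces
\[
  \limsup_{n \to +\infty} n^{-1/3} \log \E[Y^{(n)}_{f,g}] \leq -\inf_{t \in [0,1]} \bigl(g_t + H_t(f,g)\bigr) + \eta,
\]
since $n^{-1/3} \log(n+1) \to 0$. Letting $\eta \to 0$ finishes the argument.

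The main obstacle is that Corollary \ref{cor:mogulskii} is stated as a limit on $[0,1]$, not as a uniform-in-$k$ upper bound, so I must either reparametrize time (using $\tilde f(s) = f(sk/n)$, $\tilde g(s) = g(sk/n)$ and the scale $a_{n,k} = n^{1/3}$ with horizon $k$, noting $a_{n,k}^2/k = n^{2/3}/k \to 0$ whenever $k/n$ stays bounded below) or, more simply, split the sum into a range of small $k$ (where $g_{k/n} \approx g_0$ and the bound is controlled by the $-g_0$ contribution using that $H_{k/n}$ is small) and a range $k \geq \epsilon n$ (where a single uniform application of Mogul'ski\u\i{} to the rescaled functions on $[0,t]$ for $t = k/n \in [\epsilon, 1]$ works, using continuity of $t \mapsto H_t(f,g)$ and of $t \mapsto g_t$ to replace the pointwise limits by a common $\sup$ with vanishing error). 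After letting $\epsilon \to 0$, the infimum on the right-hand side is taken over the full interval $[0,1]$ and matches the claim.
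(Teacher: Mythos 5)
Your proposal is correct and follows essentially the same route as the paper: decompose $Y^{(n)}_{f,g}$ over the generation of first crossing, apply the many-to-one lemma, bound $e^{-S_k}$ by $e^{-g_{k/n}n^{1/3}}$, and control the remaining path probability by the Mogul'ski\u\i{} upper bound, handling the uniformity in $k$ by a time discretization together with the continuity of $t \mapsto g_t$ and $t \mapsto H_t(f,g)$. The paper implements that uniformity step by grouping the generations $k \in (\floor{na/A}, \floor{n(a+1)/A}]$, dropping the constraints beyond time $\floor{na/A}$ via the Markov property (the monotonicity you leave implicit) and enlarging the corridor to $[f_{j/n}-\delta, g_{j/n}+\delta]$ — an enlargement which also covers the non-strict hypotheses $f \leq g$, $f_0 \leq 0 \leq g_0$, where Theorem \ref{thm:mogulskii} and Corollary \ref{cor:mogulskii} do not directly apply — before letting $\delta \to 0$ and $A \to +\infty$.
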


\begin{proof}
Using Lemma \ref{lem:manytoone}, we have
\begin{align*}
  \E\left[ Y^{(n)}_{f,g} \right]
  &= \sum_{k=1}^n \E\left[ \sum_{|u| = k} \ind{V(u) \geq g_{k/n} n^{1/3}} \ind{V(u_j) \in I^{(n)}_j, j < k} \right]\\
  &= \sum_{k=1}^n \E\left[ e^{-S_k} \ind{S_k \geq g_{k/n} n^{1/3}} \ind{S_j \in I^{(n)}_j, j < k} \right]\\
  &\leq \sum_{k=1}^n e^{-n^{1/3} g_{k/n}} \P\left( S_j \in I^{(n)}j_j, j < k \right).
\end{align*}

Let $\delta > 0$, we set $I^{(n),\delta}_k = \left[ (f_{k/n}-\delta)n^{1/3}, (g_{k/n}+\delta) n^{1/3}\right]$. Let $A \in \N$, for $a \leq A$ we write $m_a = \floor{na/A}$ and $\underline{g}_{a,A} = \inf_{s \in [a/A,(a+1)/A]} g_s$. Applying the Markov property at time $m_a$, for any $k > m_a$, we have
\[
  e^{-n^{1/3} g_{k/n}} \P\left( S_j \in I^{(n)}j_j, j < k \right) \leq e^{-n^{1/3} \underline{g}_{a,A}} \P\left( S_j \in I^{(n)},\delta_j, j \leq m_a \right).
\]
Applying Theorem \ref{thm:mogulskii}, we have
\[
  \limsup_{n \to +\infty} n^{-1/3} \log \E\left[ Y^{(n)}_{f,g} \right] \leq \max_{a < A} -\underline{g}_{a,A} - H_{a/A}(f-\delta,g+\delta).
\]
Letting $\delta \to 0$ and $A \to +\infty$, we conclude that
\[
  \limsup_{n \to +\infty} n^{-1/3} \log \E\left[ Y^{(n)}_{f,g} \right] \leq \sup_{t \in [0,1]} - g_t - H_t(f,g).
\]
\end{proof}

Using this lemma, we note that if $\inf_{t \in [0,1]} g_t + H_t(f,g) \geq \delta$, then with high probability no individual in $\T^{(n)}_f$ crosses the curve $g_{./n} n^{1/3}$ with probability at least $1 - e^{-\delta n^{1/3}}$. In a second time, we take interest in the number of individuals that stays between $f_{./n}n^{1/3}$ and $g_{./n}n^{1/3}$. For any $f_1 \leq x < y \leq g_1$, we set
\[
  Z^{(n)}_{f,g}(x,y) = \sum_{|u|=n} \ind{V(u) \in [xn^{1/3},yn^{1/3}]} \ind{V(u_j) \in I^{(n)}_j, j \leq n}.
\] 

\begin{lemma}
\label{lem:meannumber}
Let $f<g$ be such that $\liminf_{t \to 0} \frac{g_t}{t}>-\infty$ and $\limsup_{t \to 0} \frac{f_t}{t}<+\infty$. Under assumptions \eqref{eqn:reproduction} and \eqref{eqn:critical}, we have
\[
  \lim_{n \to +\infty} n^{-1/3} \log \E\left(Z^{(n)}_{f,g}(x,y)\right) =  -(x + H_1(f,g)).
\]
\end{lemma}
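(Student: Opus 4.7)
The natural starting point is the many-to-one lemma (Lemma \ref{lem:manytoone}), which converts the expectation of $Z^{(n)}_{f,g}(x,y)$ into an expectation over a single centred random walk:
\[
  \E\left[Z^{(n)}_{f,g}(x,y)\right] = \E\left[ e^{-S_n}\ind{S_n/n^{1/3} \in [x,y]}\ind{S_j/n^{1/3} \in [f_{j/n}, g_{j/n}],\, j \leq n}\right],
\]
where $(S_n)$ is centred with variance $\sigma^2$ and $S_0 = 0$. The probability of the event inside is precisely of the type handled by the Mogul'ski\u\i{} estimates of the previous subsection, applied with $a_n = n^{1/3}$ (for which $a_n^2/n = n^{-1/3} \to 0$ and $a_n^3/n = 1$), while the weight $e^{-S_n}$ contributes the deterministic factor $e^{-xn^{1/3}(1+o(1))}$ once $S_n$ is localised near $xn^{1/3}$.

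For the upper bound, I would use $e^{-S_n} \leq e^{-xn^{1/3}}$ on the indicator event and bound the remaining probability by the supremum over starting positions of the same event, which Corollary \ref{cor:mogulskii} controls, yielding
\[
  \limsup_{n \to +\infty} n^{-1/3} \log \E\left[Z^{(n)}_{f,g}(x,y)\right] \leq -x - H_1(f,g).
\]

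For the lower bound, I would pin the terminal position to a thin window. Fix $\epsilon \in (0, y-x)$, restrict to $\{S_n/n^{1/3} \in [x, x+\epsilon]\}$ and use $e^{-S_n} \geq e^{-(x+\epsilon)n^{1/3}}$ there, so that
\[
  \E\left[Z^{(n)}_{f,g}(x,y)\right] \geq e^{-(x+\epsilon)n^{1/3}}\, \P\left(S_n/n^{1/3} \in [x, x+\epsilon],\, S_j/n^{1/3} \in [f_{j/n}, g_{j/n}],\, j \leq n\right).
\]
Applying Corollary \ref{cor:mogulskii_modified} with $\xi_j \equiv 0$ (which makes $E_n$ certain and trivialises the tail condition $a_n^2 \P(\xi_1 \geq n) \to 0$) gives $\lim_n n^{-1/3} \log \P(\cdots) = -H_1(f,g)$, and letting $\epsilon \to 0$ matches the upper bound.

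The one mildly delicate point is that Corollary \ref{cor:mogulskii_modified} requires $f_0 < 0$, whereas our hypothesis $\limsup_{t \to 0} f_t/t < +\infty$ also permits $f_0 = 0$. In that case the strict inequality $f < g$ forces $g_0 > 0$, and the lower bound is rescued by a symmetric first-stage construction: instead of letting the walk drift downward for $\floor{\delta n^{1/3}}$ initial steps as in the proof of Corollary \ref{cor:mogulskii_modified}, force those steps to lie in a small positive interval $[x', y'] \subset (0, +\infty)$. The assumption $\limsup_{t \to 0} f_t/t < +\infty$ ensures this keeps $S_j \geq f_{j/n} n^{1/3}$ while bringing the walk into the interior of the strip, at a cost of order $e^{-c \delta n^{1/3}}$ that is negligible on the $n^{-1/3}\log$ scale once $\delta \to 0$.
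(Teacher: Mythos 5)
Your proposal is correct and follows essentially the same route as the paper: many-to-one, the bounds $e^{-xn^{1/3}} \geq e^{-S_n} \geq e^{-(x+\epsilon)n^{1/3}}$ on the localised event, and the Mogul'ski\u\i{} estimates (Corollary \ref{cor:mogulskii} for the upper bound, Corollary \ref{cor:mogulskii_modified} for the lower bound), letting $\epsilon \to 0$. Your explicit symmetric first-stage construction for the case $f_0 = 0 < g_0$ is exactly what the paper leaves implicit when it notes that ``either $f_0<0$ or $g_0>0$'' and invokes Corollary \ref{cor:mogulskii_modified} (equivalently, one may reflect the walk and apply the corollary to $-g < -f$), so there is no substantive difference.
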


\begin{proof}
Applying \eqref{eqn:manytoone}, we have
\[
  \E\left(Z^{(n)}_{f,g}(x,y)\right) = \E\left[ e^{-S_n} \ind{S_n \in [x n^{1/3}, y n^{1/3}]} \ind{S_j \in I^{(n)}_j , j \leq n} \right],
\]
which yields
\begin{equation}
  \label{eqn:uppermean}
  \E\left(Z^{(n)}_{f,g}(x,y)\right) \leq e^{-x n^{1/3}} \P\left( S_n \in [x n^{1/3}, y n^{1/3}], S_j \in I^{(n)}_j, j \leq n\right).
\end{equation}
Moreover, note that for any $\epsilon>0$, $Z^{(n)}_{f,g}(x,y) \geq Z^{(n)}_{f,g}(x,x+\epsilon)$, and we have
\begin{equation}
   \E(Z^{(n)}_{f,g}(x,y) \geq e^{-(x+\epsilon) n^{1/3}} \P\left( S_n \in [xn^{1/3}, (x+\epsilon)n^{1/3}], S_j \in I^{(n)}_j, j \leq n \right). \label{eqn:lowermean}
\end{equation}
As $f<g$, $\liminf_{t \to 0} \frac{g_t}{t}>-\infty$ and $\limsup_{t \to 0} \frac{f_t}{t}<+\infty$, either $f_0<0$ or $g_0>0$. Consequently, applying Corollary \ref{cor:mogulskii_modified}, for any $f_1 \leq x' < y' \leq g_1$ we have
\[
  \lim_{n \to +\infty} n^{-1/3} \log \P\left( S_n \in [x' n^{1/3}, y' n^{1/3}], S_j \in I^{(n)}_j, j \leq n \right) = -H_1(f,g).
\]
Therefore, \eqref{eqn:uppermean} yields
\[
  \limsup_{n \to +\infty} n^{-1/3} \log \E(Z^{(n)}_{f,g}(x,y)) \leq -x - H_1(f,g)
\]
and \eqref{eqn:lowermean} yields
\[
  \liminf_{n \to +\infty} n^{-1/3} \log \E(Z^{(n)}_{f,g}(x,y)) \geq -x -\epsilon - H_1(f,g).
\]
Letting $\epsilon \to 0$ concludes the proof.
\end{proof}

Lemma \ref{lem:meannumber} is used to bound from above the number of individuals in $\T^{(n)}_f$ who are at time $n$ in a given interval. To compute a lower bound we use a second moment concentration estimate. To successfully bound from above the second moment, we are led to restrict the set of individuals we consider to individuals with ``not too many siblings'' in the following sense. For $u \in \T$, we set 
\[
  \xi(u) = \log \left( 1 + \sum_{v \in \Omega(u)} e^{V(v)-V(u)} \right)
\]
where $\Omega(u)$ is the set of siblings of $u$, i.e., the set of children of the parent of $u$ except $u$ itself. For any $\delta > 0$ and $f_1 \leq x < y \leq g_1$, we write
\[
  \tilde{Z}^{(n)}_{f,g}(x,y,\delta) = \sum_{|u|=n} \ind{V(u) \in [x n^{1/3}, y n^{1/3}]} \ind{V(u_j) \in I^{(n)}_j, \xi(u_j) \leq \delta n^{1/3}, j \leq n},
\]
and note that for any $\delta >0$, $\tilde{Z}^{(n)}_{f,g}(x,y,\delta) \leq Z^{(n)}_{f,g}(x,y)$.

\begin{lemma}
\label{lem:secondorder}
Let $f< g$ be such that $\liminf_{t \to 0} \frac{g_t}{t}>-\infty$ and $\limsup_{t \to 0} \frac{f_t}{t}<+\infty$. Under assumptions \eqref{eqn:reproduction}, \eqref{eqn:critical} and \eqref{eqn:integrability}, for any $f_1 \leq x < y \leq g_1$ and $\delta > 0$ we have
\begin{equation}
  \label{eqn:secondordermean}
  \liminf_{n \to +\infty} n^{-1/3} \log \E(\tilde{Z}^{(n)}_{f,g}(x,y,\delta)) \geq  - (x + H_1(f,g)),
\end{equation}
\begin{equation}
  \label{eqn:secondordervariance}
  \limsup_{n \to +\infty} n^{-1/3} \log \E\left[ \left( \tilde{Z}^{(n)}_{f,g}(x,y,\delta)\right)^2 \right] \leq -2 (x + H_1(f,g)) + \delta + \sup_{t \in [0,1]} g_t + H_t(f,g).
\end{equation}
\end{lemma}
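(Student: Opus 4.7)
For the lower bound \eqref{eqn:secondordermean}, I would extend the many-to-one of Lemma~\ref{lem:manytoone} to carry the spine-sibling information $\xi(w_j)$ (this follows directly from Proposition~\ref{pro:spinaldecomposition} applied to the enriched filtration recording $(V(w_j),\xi(w_j))$), yielding
\[
  \E\bigl[\tilde Z^{(n)}_{f,g}(x,y,\delta)\bigr] = \hat{\E}\bigl[e^{-V(w_n)} \ind{V(w_n) \in [xn^{1/3}, yn^{1/3}], V(w_j) \in I^{(n)}_j, \xi(w_j) \leq \delta n^{1/3}, j \leq n}\bigr],
\]
where under $\hat{\P}$ the pairs $(V(w_j)-V(w_{j-1}),\xi(w_j))$ are i.i.d. Restricting $V(w_n)$ to $[xn^{1/3},(x+\epsilon)n^{1/3}]$ pulls out the factor $e^{-(x+\epsilon)n^{1/3}}$, and the remaining probability would be estimated by Corollary~\ref{cor:mogulskii_modified} applied to the rescaled sequence $\bigl(V(w_j)-V(w_{j-1}),(\xi(w_j)/\delta)^3\bigr)$, chosen so that the convention $\{\xi_j'\leq n\}$ of the corollary matches $\{\xi(w_j)\leq \delta n^{1/3}\}$. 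The required tail bound $n^{2/3}\hat{\P}(\xi(w_1)\geq \delta n^{1/3})\to 0$ follows from $\hat{\E}[\xi(w_1)^2]<\infty$, which is exactly the content of \eqref{eqn:integrability}. This produces probability $\exp(-H_1(f,g)n^{1/3}+o(n^{1/3}))$; letting $\epsilon \to 0$ yields \eqref{eqn:secondordermean}.

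For the upper bound \eqref{eqn:secondordervariance}, let $\chi(u)\in\{0,1\}$ denote the summand of $\tilde Z$. Since $\chi^2=\chi$, one has $\E[\tilde Z^2]=\E[\tilde Z]+\E\sum_{u\neq v}\chi(u)\chi(v)$, and I would decompose the off-diagonal sum by the generation $k=|u\wedge v|$ of the most recent common ancestor. The key point is to apply many-to-one to the $u$-line only: this rewrites the level-$k$ contribution as
\[
  \hat{\E}\Bigl[e^{-V(w_n)}\chi(w_n)\!\!\!\sum_{\substack{u'\text{ child of }w_k\\u'\neq w_{k+1}}}\!\!\!\ind{V(u')\in I^{(n)}_{k+1},\,\xi(u')\leq \delta n^{1/3}}\,m(V(u'),k+1)\Bigr],
\]
where $m(\ell,k+1)$ is the expected descendant count from position $\ell$ at generation $k+1$, bounded via a shifted version of Lemma~\ref{lem:meannumber} by $e^{\ell-xn^{1/3}}\exp\bigl(-(H_1-H_{(k+1)/n})(f,g)n^{1/3}+o(n^{1/3})\bigr)$. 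The identity $\sum_{u'\text{ child of }w_k}e^{V(u')}=e^{V(w_{k+1})+\xi(w_{k+1})}$, combined with the constraints $V(w_{k+1})\leq g_{(k+1)/n}n^{1/3}$ and $\xi(w_{k+1})\leq \delta n^{1/3}$ enforced by $\chi(w_n)$, gives $\sum_{u'}e^{V(u')}\leq e^{(g_{(k+1)/n}+\delta)n^{1/3}}$. Combined with $e^{-V(w_n)}\leq e^{-xn^{1/3}}$ and the Mogul'ski estimate $\hat{\P}[\chi(w_n)=1]=\exp(-H_1(f,g)n^{1/3}+o(n^{1/3}))$, the level-$k$ term is bounded by $\exp\bigl(n^{1/3}[-2x-2H_1(f,g)+H_{(k+1)/n}(f,g)+g_{(k+1)/n}+\delta+o(1)]\bigr)$. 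Summing over $k$ and taking the supremum over $t=k/n$ yields \eqref{eqn:secondordervariance}.

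The main obstacle is identifying this asymmetric ``one spine'' decomposition. The naive alternative --- applying many-to-one at the common-ancestor level $k$ and bounding $\bigl(\sum_{u'}m(V(u'))\bigr)^2$ by $P_k^2 e^{-2xn^{1/3}}\bigl(\sum_{u'}e^{V(u')}\bigr)^2$ --- produces a bound with $2(g_t+\delta)$ in the exponent, which is too loose by roughly $(g_t+\delta)n^{1/3}$ to match the statement. Promoting one descendant line to the spine instead makes its $e^{V(u')}$ factor telescope (via the Girsanov tilt) to $e^{-V(w_n)}\leq e^{-xn^{1/3}}$, so that the cousin sum $\sum_{u'}e^{V(u')}$ enters only once --- and the $\xi$ truncation appears with weight~$1$ rather than~$2$.
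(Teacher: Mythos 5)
Your argument is correct and is essentially the paper's own proof: the lower bound is the same spinal/many-to-one computation followed by Corollary \ref{cor:mogulskii_modified} (with exactly the tail bound $n^{2/3}\hat{\P}(\xi(w_1)\geq \delta n^{1/3})\to 0$ coming from \eqref{eqn:integrability}), and your MRCA decomposition with one line promoted to the spine is precisely the paper's decomposition of $Z^{(n)}_{f,g}(x,y)$ according to the generation at which individuals split from the spine, with the cousin sum controlled once through $\sum_{u'}e^{V(u')}\leq e^{V(w_{k+1})+\xi(w_{k+1})}$. The only step you gloss over --- making the bound on $m(\ell,k+1)$, and hence the $o(n^{1/3})$ error, uniform in the splitting generation $k$ --- is handled in the paper by the same block discretization ($k\in(m_a,m_{a+1}]$ together with Corollary \ref{cor:mogulskii}) that your sketch implicitly requires.
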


\begin{proof}
For any $\epsilon > 0$, applying Proposition \ref{pro:spinaldecomposition} we have
\begin{align*}
  &\E\left[ \tilde{Z}^{(n)}_{f,g}(x, y,\delta) \right]\\
  &= \bar{\E}\left[ \frac{1}{W_n} \sum_{|u|=n} \ind{V(u) \in [xn^{1/3}, yn^{1/3}]} \ind{V(u_j) \in I^{(n)}_j, j \leq n} \ind{\xi(u_j) \leq \delta n^{1/3}, j \leq n} \right]\\
  &\geq \hat{\E}\left[ e^{-V(w_n)} \ind{V(w_n) \in [xn^{1/3},(x+\epsilon)n^{1/3}]} \ind{V(w_j) \in I^{(n)}_j, \xi(w_j) \leq \delta n^{1/3}, j \leq n} \right]\\
  &\geq e^{-(x+\epsilon)n^{1/3}}\hat{\P} \left[ V(w_n) \in [xn^{1/3},(x+\epsilon)n^{1/3}], V(w_j) \in I^{(n)}_j, \xi(w_j) \leq \delta n^{1/3}, j \leq n \right].
\end{align*}
Setting $X=\xi(w_1)$, \eqref{eqn:integrability} implies $\hat{\E}(X^2)<+\infty$, thus $\lim_{z \to +\infty} z^2 \hat{\P}(X \geq z) = 0$. Applying Corollary \ref{cor:mogulskii_modified}, we obtain
\[
  \liminf_{n \to +\infty} n^{-1/3} \log \E\left[ \tilde{Z}^{(n)}_{f,g}(x,y,\delta) \right] \geq -(x+\epsilon) - H_1(f,g),
\]
and conclude the proof of \eqref{eqn:secondordermean} by letting $\epsilon \to 0$.

We now take care of the second moment. Using again Proposition \ref{pro:spinaldecomposition}, we have
\begin{align}
  &\E\left[(\tilde{Z}^{(n)}_{f,g}(x,y,\delta))^2\right]\nonumber\\
  = &\bar{\E}\left[ \frac{\tilde{Z}^{(n)}_{f,g}(x,y,\delta)}{W_n} \sum_{|u|=n} \ind{V(u) \in [x n^{1/3},y n^{1/3}]} \ind{V(u_j) \in I^{(n)}_j, j \leq n} \ind{\xi(u_j) \leq \delta n^{1/3}, j \leq n}\right]\nonumber\\
  \leq &\hat{\E}\left[ e^{-V(w_n)} Z^{(n)}_{f,g}(x,y) \ind{V(w_n) \in [x n^{1/3}, y n^{1/3}]} \ind{V(w_j) \in I^{(n)}_j, j \leq n} \ind{\xi(w_j) \leq \delta n^{1/3}, j \leq n} \right]\nonumber\\
  \leq &e^{-x n^{1/3}} \hat{\E}\left[ Z^{(n)}_{f,g}(x,y) \ind{V(w_n) \in [x n^{1/3}, y n^{1/3}]} \ind{V(w_j) \in I^{(n)}_j, j \leq n} \ind{\xi(w_j) \leq \delta n^{1/3}, j \leq n} \right].\label{eqn:z}
\end{align}
We decompose $Z^{(n)}_{f,g}(x,y)$ according to the generation at which individuals split with the spine. For $u,v \in \T$, we write $v \geq u$ if $v$ is a descendant of $u$. For $u \in \T$ we set
\[
 \Lambda(u) = \sum_{|v|=n, v \geq u} \ind{V(v) \in [x n^{1/3}, y n^{1/3}]} \ind{V(v_j) \in I^{(n)}_j, j \leq n}.
\]
We have
\[
  Z^{(n)}_{f,g}(x,y) = \ind{V(w_n) \in [x n^{1/3}, y n^{1/3}]} \ind{V(w_j) \in I^{(n)}_j, j \leq n} + \sum_{k=1}^{n} \sum_{u \in \Omega_k} \Lambda(u),
\]
where $\Omega_k = \Omega(w_k)$ is the set of siblings of $w_{k}$.

By definition of $\hat{\P}$, conditionally on $\hat{\calF}_k$ the subtree of the descendants of $u \in \Omega_k$ is distributed as a branching random walk starting from $V(u)$. For any $k \leq n$ and $u \in \Omega_k$, applying Lemma \ref{lem:manytoone} we have
\begin{align*}
  \E\left[ \left. \Lambda(u)\right|\hat{\calF}_k\right] 
  &= \ind{V(w_j) \in I^{(n)}_j, j \leq k-1} \E_{V(u)} \left[ \sum_{|v|=n-k} \ind{V(v) \in [x n^{1/3}, y n^{1/3}]} \ind{V(v_j) \in I^{(n)}_{k+j}, j \leq n-k} \right]\\
  &= \ind{V(w_j) \in I^{(n)}_j, j \leq k-1} e^{-V(u)} \E_{V(u)} \left[ e^{-S_{n-k}} \ind{S_{n-k} \in [x n^{1/3}, y n^{1/3}]} \ind{S_j \in I^{(n)}_{k+j}, j \leq n-k} \right]\\
  &\leq e^{V(w_k) - xn^{1/3}} e^{V(u) - V(w_k)} \P_{V(u)} \left[ S_j \in I^{(n)}_{k+j}, j \leq n-k \right].
\end{align*}
Thus, by definition of $\xi(w_{k-1})$,
\[
  \sum_{u \in \Omega_k} \E\left[ \left. \Lambda(u) \right| \hat{\calF}_k \right] \leq e^{V(w_k) - x n^{1/3}} e^{\xi(w_{k})} \sup_{z \in \R} \P_z \left[ S_j \in I^{(n)}_{k+j}, j \leq n-k \right].
\]

Let $A \in \N$. For any $a \leq A$ we write $m_a = \floor{na/A}$. For any $k \leq m_a$ and $z \in \R$, applying the Markov property at time $m_a-k$ we have
\[
  \P_z \left[ S_j \in I^{(n)}_{k+j}, j \leq n-k \right] \leq \sup_{z' \in \R} \P_{z'}\left[ S_j \in I^{(n)}_{m_a+j}, j \leq n-m_a \right].
\]
We write $\Psi^{(n)}_a = \sup_{z' \in \R} \P_{z'}\left[ S_j \in I^{(n)}_{m_a+j}, j \leq n-m_a \right]$. By Corollary \ref{cor:mogulskii}, we have
\[
  \limsup_{n \to +\infty} n^{-1/3} \log \Psi^{(n)}_a \leq - \left( H_1(f,g) - H_{a/A}(f,g) \right). 
\]
Moreover, \eqref{eqn:z} becomes
\begin{multline*}
  \E\left[ \left(\tilde{Z}^{(n)}_{f,g}(x,y) \right)^2 \right] \leq e^{-xn^{1/3}} \P( S_j \in I^{(n)}_j, j \leq n)\\
  + e^{-2 x n^{1/3}} \sum_{a=0}^{A-1}\Psi^{(n)}_{a+1} \sum_{k=m_a+1}^{m_{a+1}} \E\left[ e^{V(w_{k})} e^{\xi(w_{k})} \ind{V(w_j) \in I^{(n)}_j, \xi(w_j) \leq \delta n^{1/3}, j \leq n} \right].
\end{multline*}
We set $\bar{g}_{a,A} = \sup_{s \in [\frac{a}{A},\frac{a+1}{A}]} g_s$, we have
\[
  \E\left[ e^{V(w_{k})} e^{\xi(w_{k})} \ind{V(w_j) \in I^{(n)}_j, \xi(w_j) \leq \delta n^{1/3}, j \leq n} \right] \leq  e^{n^{1/3}(\bar{g}_{a,A}+\delta)} \P(S_j \in I^{(n)}_j, j \leq n).
\]
We apply Theorem \ref{thm:mogulskii} to obtain
\[
  \limsup_{n \to +\infty} n^{-1/3} \log \sum_{k=m_a+1}^{m_{a+1}} \E\left[ e^{V(w_{k})} \xi(w_{k}) \ind{V(w_j) \in I^{(n)}_j, \xi(w_j) \leq \delta n^{1/3}, j \leq n} \right] \leq \bar{g}_{a,A} + \delta - H_1(f,g).
\]

We conclude that
\[  \limsup_{n \to +\infty} n^{-1/3} \log \E\left[(\tilde{Z}^{(n)}_{f,g}(x,y))^2\right] \leq -(2x + H_1(f,g)) + \delta + \max_{a < A} \bar{g}_{a,A} + H_\frac{a+1}{A}(f,g).\]
Letting $A \to +\infty$ concludes the proof.
\end{proof}

A straightforward consequence of Lemma \ref{lem:secondorder} is a lower bound on the asymptotic behaviour of the probability for $Z^{(n)}_{f,g}$ to be positive.
\begin{corollary}
\label{cor:lowboundproba}
Under the assumptions of Lemma \ref{lem:secondorder}, we have
\[
  \liminf_{n \to +\infty} n^{-1/3} \log \P\left[ Z^{(n)}_{f,g}(x,y) \geq 1 \right] \geq -\sup_{t \in [0,1]} g_t + H_t(f,g).
\]
\end{corollary}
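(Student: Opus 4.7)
The plan is to use the second moment method (Paley--Zygmund inequality) applied to the truncated count $\tilde Z^{(n)}_{f,g}(x,y,\delta)$ introduced in Lemma~\ref{lem:secondorder}, whose first and second moment estimates are exactly what one needs.

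First, since $\tilde Z^{(n)}_{f,g}(x,y,\delta) \leq Z^{(n)}_{f,g}(x,y)$, the event $\{\tilde Z^{(n)}_{f,g}(x,y,\delta) \geq 1\}$ is contained in $\{Z^{(n)}_{f,g}(x,y) \geq 1\}$. The Cauchy--Schwarz inequality applied to $\tilde Z \cdot \mathbf{1}_{\{\tilde Z \geq 1\}}$ yields
\[
  \P\bigl[ Z^{(n)}_{f,g}(x,y) \geq 1 \bigr] \;\geq\; \P\bigl[ \tilde Z^{(n)}_{f,g}(x,y,\delta) \geq 1 \bigr] \;\geq\; \frac{\bigl(\E[\tilde Z^{(n)}_{f,g}(x,y,\delta)]\bigr)^2}{\E\bigl[\bigl(\tilde Z^{(n)}_{f,g}(x,y,\delta)\bigr)^2\bigr]}.
\]

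Second, I plug in the two estimates from Lemma~\ref{lem:secondorder}: the lower bound \eqref{eqn:secondordermean} on $\E[\tilde Z]$ gives a numerator of order $\exp\bigl(-2 n^{1/3}(x + H_1(f,g)) + o(n^{1/3})\bigr)$, while \eqref{eqn:secondordervariance} gives a denominator of order $\exp\bigl(n^{1/3}(-2(x + H_1(f,g)) + \delta + \sup_{t \in [0,1]} (g_t + H_t(f,g))) + o(n^{1/3})\bigr)$. The terms $-2(x + H_1(f,g))$ cancel in the logarithm of the ratio, leaving
\[
  \liminf_{n \to +\infty} n^{-1/3} \log \P\bigl[ Z^{(n)}_{f,g}(x,y) \geq 1 \bigr] \;\geq\; -\delta - \sup_{t \in [0,1]} \bigl(g_t + H_t(f,g)\bigr).
\]

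Finally, since the left-hand side does not depend on $\delta$, I let $\delta \to 0$ to conclude. No step is genuinely an obstacle here: all the analytic work (spinal decomposition, Mogul'ski\u\i{} asymptotics, control of the log-sibling term via \eqref{eqn:integrability}, and the decomposition by the generation of splitting from the spine) has already been absorbed into Lemma~\ref{lem:secondorder}, so the corollary is just the mechanical Paley--Zygmund consequence of those two moment estimates.
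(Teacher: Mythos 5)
Your proposal is correct and is essentially the paper's own proof: the paper likewise bounds $\P[Z^{(n)}_{f,g}(x,y) \geq 1] \geq \P[\tilde Z^{(n)}_{f,g}(x,y,\delta) \geq 1] \geq \E[\tilde Z^{(n)}_{f,g}(x,y,\delta)]^2 / \E[(\tilde Z^{(n)}_{f,g}(x,y,\delta))^2]$ and then invokes the two estimates of Lemma \ref{lem:secondorder}, the extra $\delta$ disappearing since $\delta>0$ is arbitrary. Nothing further is needed.
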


\begin{proof}
For any $\delta > 0$, we have $Z^{(n)}_{f,g}(x,y) \geq \tilde{Z}^{(n)}_{f,g}(x,y,\delta)$. As a consequence,
\[
  \P\left[ Z^{(n)}_{f,g}(x,y) \geq 1 \right]
  \geq \P\left[ \tilde{Z}^{(n)}_{f,g}(x,y,\delta) \geq 1 \right]
  \geq \frac{\E\left[ \tilde{Z}^{(n)}_{f,g}(x,y,\delta) \right]^2}{\E\left[ \tilde{Z}^{(n)}_{f,g}(x,y,\delta)^2 \right]}
\]
by the Cauchy-Schwarz inequality. Therefore using Lemma \ref{lem:secondorder} we have
\[
  \liminf_{n \to +\infty} n^{-1/3} \log \P\left[ Z^{(n)}_{f,g}(x,y) \geq 1 \right] \geq -\sup_{t \in [0,1]} g_t + H_t(f,g).
\]
\end{proof}

Another application of Lemma \ref{lem:secondorder} is a lower bound on the value of the sum of a large number of i.i.d. versions of $Z^{(n)}_{f,g}(x,y)$. This is useful observing that by Lemma \ref{lem:lbsizepop}, at time $k$ there exists with high probability at least $\rho^k$ individuals, each of which starting an independent branching random walk.

\begin{corollary}
\label{cor:sizepop}
Under the assumptions of Lemma \ref{lem:secondorder}, we set $(Z_{f,g}^{(n),j}(x,y), j \in \N)$ i.i.d. copies of $Z^{(n)}_{f,g}(x,y)$. Let $z > 0$, we write $p = \floor{e^{zn^{1/3}}}$. For any $\epsilon>0$, we have
\[
  \limsup_{n \to +\infty} n^{-1/3} \log \P\left[ \sum_{j=1}^p Z_{f,g}^{(n),j}(x,y) \leq \exp\left( n^{1/3}(z - x - H_1(f,g) - \epsilon\right) \right] \leq -z + \sup_{t \in [0,1]} g_t + H_t(f,g) .
\]
\end{corollary}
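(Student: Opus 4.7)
The plan is to replace each $Z^{(n),j}_{f,g}(x,y)$ by its truncated version $\tilde Z^{(n),j}_{f,g}(x,y,\delta)$ (with $\delta>0$ small, to be sent to $0$ at the end), and then apply a second-moment/Chebyshev argument to the sum of $p$ i.i.d.\ copies of the truncated variable. This is natural because Lemma \ref{lem:secondorder} provides a matching lower bound on $\E[\tilde Z^{(n)}_{f,g}(x,y,\delta)]$ and an upper bound on its second moment, so variance estimates are directly available.

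Concretely, write $\tilde Z^{(n),j} := \tilde Z^{(n),j}_{f,g}(x,y,\delta)$, $S_p := \sum_{j=1}^p \tilde Z^{(n),j}$, and $\mu_n := \E[\tilde Z^{(n),1}]$, $v_n := \E[(\tilde Z^{(n),1})^2]$. Since $Z^{(n),j}_{f,g}(x,y) \ge \tilde Z^{(n),j}$, it suffices to control $\P[S_p \le T_n]$ where $T_n := \exp(n^{1/3}(z - x - H_1(f,g) - \epsilon))$. Fix an auxiliary $\eta>0$ much smaller than $\epsilon$. By Lemma \ref{lem:secondorder}, for $n$ large enough
\[
\mu_n \ge \exp\bigl(n^{1/3}(-(x+H_1(f,g)) - \eta)\bigr),
\qquad
v_n \le \exp\bigl(n^{1/3}(-2(x+H_1(f,g)) + \delta + \sup_{t\in[0,1]}(g_t+H_t(f,g)) + \eta)\bigr).
\]
Combining these with $p \ge e^{zn^{1/3}}/2$ gives $p\mu_n \ge \tfrac{1}{2}\exp(n^{1/3}(z-x-H_1(f,g)-\eta))$, so for $\eta<\epsilon$ and $n$ large we have $T_n \le \tfrac{1}{2} p\mu_n$.

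Therefore, by the Chebyshev inequality applied to $S_p$, whose variance is at most $p v_n$,
\[
\P[S_p \le T_n] \le \P\bigl[|S_p - p\mu_n| \ge \tfrac{1}{2} p\mu_n\bigr]
\le \frac{4 p v_n}{(p\mu_n)^2} = \frac{4 v_n}{p \mu_n^2}.
\]
Plugging in the bounds on $\mu_n$ and $v_n$, and using $p \ge \tfrac{1}{2}e^{zn^{1/3}}$, one gets
\[
\P[S_p \le T_n] \le 8 \exp\Bigl(n^{1/3}\bigl(-z + \sup_{t \in [0,1]}(g_t + H_t(f,g)) + \delta + 3\eta\bigr)\Bigr).
\]
Since $Z^{(n),j}_{f,g}(x,y) \ge \tilde Z^{(n),j}$, the same upper bound holds for $\P[\sum_{j=1}^p Z^{(n),j}_{f,g}(x,y) \le T_n]$. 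Taking $\log$, dividing by $n^{1/3}$, sending $n \to +\infty$ and then letting $\delta, \eta \to 0$ yields the desired bound.

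The argument is essentially routine once the truncation $\tilde Z$ is in place; the only subtlety is making sure the approximations $\eta,\delta \to 0$ are taken in an order that preserves the final estimate, and that $T_n$ is strictly below $p\mu_n/2$ for large $n$, which is ensured by the gap $\epsilon - \eta > 0$. No genuine obstacle is expected beyond bookkeeping of these constants.
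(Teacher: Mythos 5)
Your proof is correct and follows essentially the same route as the paper: reduce to the truncated variables $\tilde Z^{(n)}_{f,g}(x,y,\delta)$, use the first and second moment bounds of Lemma \ref{lem:secondorder} together with the Bienaym\'e--Chebyshev inequality for the sum of $p$ i.i.d.\ copies, and then let the auxiliary parameters tend to $0$. The bookkeeping with $\eta<\epsilon$ ensuring the threshold lies below half the mean is exactly the point the paper also relies on, so there is nothing to add.
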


\begin{proof}
The proof is based on the following observation. Let $(X_j, j \in \N)$ be i.i.d. random variables with finite variance. Using the Bienaymé-Chebychev inequality, we have
\begin{multline}
  \label{eqn:concentration}
  \P\left( \sum_{j=1}^p X_j \leq \frac{1}{2}\E\left( \sum_{j=1}^p X_j \right) \right) \leq \P\left( \left|\sum_{j=1}^p X_j - p \E(X_1) \right| \geq p\E(X_1)/2 \right)\\
  \leq 4 \frac{\Var\left( \sum_{j=1}^p X_j \right)}{p^2 \E(X_1)^2} \leq 4 \frac{\Var(X_1)}{p \E(X_1)} \leq 4 \frac{\E(X_1^2)}{p\E(X_1)^2}.
\end{multline}

Let $\delta > 0$, as $Z^{(n)}_{f,g}(x,y) \geq \tilde{Z}^{(n)}_{f,g}(x,y,\delta)$, we have
\begin{multline*}
  \P\left[ \sum_{j=1}^p Z_{f,g}^{(n),j}(x,y) \leq \exp\left( n^{1/3}(z - x - H_1(f,g) - \epsilon)\right) \right] \\ \leq \P\left[ \sum_{j=1}^p \tilde{Z}_{f,g}^{(n),j}(x,y,\delta) \leq \exp\left( n^{1/3}(z - x - H_1(f,g) - \epsilon\right) \right],
\end{multline*}
where $(\tilde{Z}^{(n),j}_{f,g}(x,y,\delta), j \in \N)$ is a sequence of i.i.d. copies of $\tilde{Z}_{f,g}^{(n),j}(x,y,\delta)$. By Lemma \ref{lem:secondorder},
\[
  \liminf_{n \to +\infty} n^{-1/3} \log \E\left( \tilde{Z}^{(n)}_{f,g}(x,y,\delta) \right) \geq - \left( x + H_1(f,g) \right),
\]
thus, for any $\epsilon>0$, for any $n \geq 1$ large enough we have
\[
  \E\left( \tilde{Z}^{(n)}_{f,g}(x,y,\delta) \right)/2 \geq e^{- n^{1/3}(x+H_1(f,g) + \epsilon)}.
\]
Therefore, using again Lemma \ref{lem:secondorder} and \eqref{eqn:concentration}, we have
\begin{multline*}
  \limsup_{n \to +\infty} n^{-1/3} \log \P\left[ \sum_{j=1}^p \tilde{Z}_{f,g}^{(n),j}(x,y,\delta) \leq \exp\left( n^{1/3}(z - x-H_1(f,g) - \epsilon \right) \right] \\
   \leq -z +\delta + \sup_{t \in [0,1]} g_t + H_t(f,g).
\end{multline*}
Consequently, letting $\delta \to 0$ we have
\[
  \limsup_{n \to +\infty} n^{-1/3} \log \P\left[ \sum_{j=1}^p Z_{f,g}^{(n),j}(x,y) \leq \exp\left( n^{1/3}(z - x-H_1(f,g) - \epsilon \right) \right] \leq -z + \sup_{t \in [0,1]} g_t + H_t(f,g).
\]
\end{proof}

\subsection{Asymptotic behaviour of the branching random walk with a killing boundary}
\label{subsec:behaviour}

The results of Section \ref{subsec:computations}, in particular Lemma \ref{lem:frontier} and Corollaries \ref{cor:lowboundproba} and \ref{cor:sizepop}, emphasize the importance of the functions $g$ verifying
\begin{equation}
  \label{eqn:defineg}
  \forall t \in [0,1], g_t = g_0 - H_t(f,g) > f_t,
\end{equation}
in the study of $\T^{(n)}_f$. For such a function, the estimates of Lemmas \ref{lem:frontier}, \ref{lem:meannumber} and \ref{lem:secondorder} are tight. They enable to precisely study the asymptotic behaviour of $\T^{(n)}_f$.

\begin{theorem}
\label{thm:Tfbehaviour}
We consider a branching random walk $(\T,V)$ satisfying \eqref{eqn:reproduction}, \eqref{eqn:critical} and \eqref{eqn:integrability}. Let $f \in \calC([0,1])$ be such that $f_0<0$. If there exists a continuous function $g$ such that
\[
  g_0=0 , \quad \forall t \in [0,1], g_t = -\frac{\pi^2 \sigma^2}{2} \int_0^t \frac{ds}{(g_s-f_s)^2} \quad \mathrm{and} \quad \forall t \in [0,1], g_t > f_t,
\]
then almost surely for $n \geq 1$ large enough, $\{u \in \T^{(n)}_f : |u|=n\} \neq \emptyset$ and
\begin{multline}
  \label{eqn:boundsfortf}
  \lim_{n \to +\infty} \frac{1}{n^{1/3}} \# \left\{ u \in \T^{(n)}_f : |u| = n \right\} = g_1 -f_1,\\
  \lim_{n \to +\infty} \frac{1}{n^{1/3}}\min_{u \in \T^{(n)}_f , |u|=n} V(u)= f_1 \quad \mathrm{and} \quad \lim_{n \to +\infty} \frac{1}{n^{1/3}}\max_{u \in \T^{(n)}_f , |u|=n} V(u) = g_1 \quad \mathrm{a.s.} 
\end{multline}
Otherwise, writing
\begin{equation}
  \label{eqn:deflambda}
  \lambda = \inf\left\{ g_0 , g \in \calC([0,1]) : \forall t \in [0,1], g_t = g_0 - \frac{\pi^2 \sigma^2}{2}\int_0^t \frac{ds}{(g_s-f_s)^2} > f_t \right\},
\end{equation}
then
\begin{equation}
  \label{eqn:finlambda}
  \lim_{n \to +\infty} n^{-1/3} \log \P\left(\left\{u \in \T^{(n)}_f : |u|=n\right\} \neq \emptyset \right) = -\lambda.
\end{equation}
\end{theorem}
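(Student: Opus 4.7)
I would prove the two cases separately, combining the first-moment estimates (Lemmas \ref{lem:frontier} and \ref{lem:meannumber}), the second-moment concentration (Lemma \ref{lem:secondorder}), and the many-descendants bound (Lemma \ref{lem:lbsizepop}), together with Corollaries \ref{cor:lowboundproba} and \ref{cor:sizepop}. I read the displayed limit for the cardinality in \eqref{eqn:boundsfortf} as $\lim n^{-1/3}\log\#$, which is the only reading consistent with the exponential growth implied by Lemma \ref{lem:meannumber}.

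For the upper bounds in the survival case, the key move is to perturb $g$. Let $g^{(\delta)}$ be the ODE solution with $g^{(\delta)}_0=\delta$, $g^{(\delta)}_t=\delta-H_t(f,g^{(\delta)})$: by continuous dependence of this fixed-point equation on initial data, for $\delta>0$ small the solution $g^{(\delta)}$ still sits strictly above $f$, and the identity $g^{(\delta)}_t+H_t(f,g^{(\delta)})=\delta$ makes Lemma \ref{lem:frontier} give $\E Y^{(n)}_{f,g^{(\delta)}}\leq e^{-\delta n^{1/3}+o(n^{1/3})}$. Markov's inequality and Borel--Cantelli then show that, almost surely for $n$ large, no individual in $\T^{(n)}_f$ ever crosses the curve $g^{(\delta)}_{\cdot/n}n^{1/3}$, and in particular $\max V\leq g^{(\delta)}_1 n^{1/3}$ while $\#\{u\in\T^{(n)}_f:|u|=n\}\leq Z^{(n)}_{f,g^{(\delta)}}(f_1,g^{(\delta)}_1)$. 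The latter has expectation $e^{(g^{(\delta)}_1-f_1+o(1))n^{1/3}}$ by Lemma \ref{lem:meannumber}, so a second Markov/Borel--Cantelli caps $n^{-1/3}\log\#$ above by $g^{(\delta)}_1-f_1$; sending $\delta\to 0$ gives the upper bounds $g_1$ and $g_1-f_1$.

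For the matching lower bounds I partition $[f_1,g_1]$ into slices of width $\eta$ and, on each slice, apply Paley--Zygmund to $\tilde Z^{(n)}_{f,g}(x,x+\eta,\delta_0)$. Lemma \ref{lem:secondorder} gives first moment of order $e^{(g_1-x+o(1))n^{1/3}}$ and, using $\sup_t[g_t+H_t(f,g)]=0$, second moment bounded by $e^{(2(g_1-x)+\delta_0+o(1))n^{1/3}}$, so each slice survives with probability at least $e^{-(\delta_0+o(1))n^{1/3}}$. To amplify this to probability tending to $1$, I use Lemma \ref{lem:lbsizepop} to generate $\rho^{n_0}$ essentially independent BRW subtrees at a burn-in time $n_0=\lfloor\epsilon n^{1/3}\rfloor$ with roots above the linear barrier $-a n_0$; choosing $\epsilon$ so that $a\epsilon<|f_0|$ makes this barrier lie above $f_{j/n}n^{1/3}$ on $[0,n_0]$, so the subtrees are genuinely embedded in $\T^{(n)}_f$. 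Feeding the $(n-n_0)$-step continuations into Corollary \ref{cor:sizepop} (after restricting to a subfamily of roots in a bounded absolute window, so the shifted tubes are essentially $[f,g]$, and choosing $\epsilon\log\rho>\delta_0$) shows that, with failure probability decaying like $\exp(-e^{cn^{1/3}})$, each slice carries at least $e^{(g_1-f_1-k\eta-o(1))n^{1/3}}$ individuals at time $n$. A union bound over the $O(1/\eta)$ slices plus Borel--Cantelli, then $\eta\to 0$, yields $\liminf n^{-1/3}\log\#\geq g_1-f_1$, $\liminf n^{-1/3}\max V\geq g_1$, and $\limsup n^{-1/3}\min V\leq f_1$; the reverse $\min V\geq f_1 n^{1/3}$ is automatic from the killing rule.

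In the extinction case $\lambda>0$, the lower bound on $\P(\T^{(n)}_f\cap\{|u|=n\}\neq\emptyset)$ is immediate: for any $g_0>\lambda$ the ODE solution $g$ with that initial value satisfies $\sup_t[g_t+H_t(f,g)]=g_0$, so Corollary \ref{cor:lowboundproba} gives $\P[Z^{(n)}_{f,g}(f_1,g_1)\geq 1]\geq e^{-g_0 n^{1/3}+o(n^{1/3})}$ and sending $g_0\to\lambda^+$ gives $-\lambda$. For the matching upper bound, one decomposes each potential survivor according to whether its trajectory stays below $g_{\cdot/n}n^{1/3}$ or crosses it, yielding $\P(\T^{(n)}_f\cap\{|u|=n\}\neq\emptyset)\leq\E Z^{(n)}_{f,g}(f_1,g_1)+\E Y^{(n)}_{f,g}\leq e^{(g_1-g_0-f_1+o(1))n^{1/3}}+e^{(-g_0+o(1))n^{1/3}}$. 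I expect the main obstacle to be controlling the first term as $g_0\to\lambda^+$: when the critical profile $g^{(\lambda)}$ touches $f$ at $t=1$ both terms collapse to $e^{-\lambda n^{1/3}}$, but if the touching occurs at some $t^*<1$ the ODE barrier leaves $g_1-f_1$ bounded away from $0$, forcing one to replace $g$ by a non-ODE upper barrier that is thin near $t^*$ and reopens afterwards, engineered so that $g_1-f_1$ becomes comparably small while $g_0$ stays near $\lambda$; the variational definition of $\lambda$ should make such a barrier available, but verifying its existence is the delicate point.
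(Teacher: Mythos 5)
Your treatment of the survival case ($g^{0}>f$ on $[0,1]$) follows essentially the paper's own route — perturbed ODE curves $g^{(\delta)}$ plus Lemma \ref{lem:frontier} and Lemma \ref{lem:meannumber} with Markov/Borel--Cantelli for the upper bounds, and Lemma \ref{lem:lbsizepop} feeding Corollaries \ref{cor:lowboundproba} and \ref{cor:sizepop} for the lower bounds — and your reading of the cardinality statement as $\lim n^{-1/3}\log\#$ is the intended one. Two small inaccuracies there: the failure probability furnished by Corollary \ref{cor:sizepop} is of order $e^{-cn^{1/3}}$ (Chebyshev), not $\exp(-e^{cn^{1/3}})$ — still summable, so Borel--Cantelli goes through; and the roots produced by Lemma \ref{lem:lbsizepop} at the burn-in time lie in a window of width $O(\epsilon n^{1/3})$, not a bounded window, which is in fact all you need since it only shifts the rescaled tube by $O(\epsilon)$.

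The genuine gap is the upper bound of \eqref{eqn:finlambda} in the extinction case, and it is exactly the point you flag but do not resolve. Your decomposition $\P(\exists\,|u|=n:u\in\T^{(n)}_f)\le \E\bigl[Z^{(n)}_{f,g}(f_1,g_1)\bigr]+\E\bigl[Y^{(n)}_{f,g}\bigr]$ with $g$ the ODE solution started at $g_0>\lambda$ gives the exponent $\max\bigl(g_1-f_1-g_0,\,-\lambda\bigr)$, and when the critical profile $g^{\lambda}$ touches $f$ at an interior time $t_\lambda<1$ (which happens, e.g., when $f$ has a high hump and then drops), $g_1-f_1$ stays bounded away from $0$ as $g_0\downarrow\lambda$, so the first term destroys the bound; the "thin then reopening" barrier you invoke is never constructed, and it is precisely the crux of the matching upper bound. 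The paper avoids this entirely: it works only up to time $\lfloor t_\lambda n\rfloor$, noting that any individual of $\T^{(n)}_f$ alive at time $n$ has an ancestor at time $\lfloor t_\lambda n\rfloor$ positioned above $f_{t_\lambda}n^{1/3}=g^{\lambda}_{t_\lambda}n^{1/3}$, so its trajectory must cross the critical curve $g^{\lambda}$ by that time; since $g^{\lambda}_s+H_s(f,g^{\lambda})\equiv\lambda$ on $[0,t_\lambda]$, Lemma \ref{lem:frontier} applied to the rescaled pair $(f^{(1)},g^{(1)})$ on $\lfloor t_\lambda n\rfloor$ steps gives $\E[Y]\le e^{(-\lambda+o(1))n^{1/3}}$, and Markov's inequality concludes — no two-term decomposition and no auxiliary barrier needed. (Your pinched-barrier idea can be made to work — run the ODE curve with $g_0=\lambda+\epsilon$ to the touching time, then continue at a small constant distance $\delta$ above $f$, so that $H_1$ blows up and kills the $\E[Z]$ term while $\inf_t(g_t+H_t)\ge\lambda-o_\delta(1)$ — but as written this verification is missing, so the second half of the theorem is not proved in your proposal.)
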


\begin{proof}
We study the solutions of the differential equation \eqref{eqn:defineg}. As $(t,x) \mapsto -\frac{\pi^2 \sigma^2}{2(x-f_t)^2}$ is locally Lipschitz on $\{(t,x) \in [0,1] \times \R : x > f_t\}$, the Cauchy-Lipschitz theorem implies that for any $x > f_0$, there exists a unique continuous function $g^x$ defined on the maximal interval $[0,t_x]$ such that $g^x_0 = x$, either $t_x =1$ or $g_{t_x}=f_{t_x}$, and for any $t< t_x$
\[
  g^x_t = x - \frac{\pi^2 \sigma^2}{2} \int_0^t \frac{ds}{(g^x_s - f_s)^2}.
\]
Moreover, we observe that $t_x$ is increasing with respect to $x$ and $g^x_t$ is decreasing in $t$ and increasing in $x$ on $\{ (t,x) \in [0,1] \times (f_0,+\infty) : t \leq t_x\}$. With these notations, we have
\[
  \lambda = \inf\left\{ x >f_0 : t_x = 1 \right\}.
\]
As $\lim_{x \to +\infty} \sup_{t \in [0,1]} \frac{\pi^2 \sigma^2}{2(x-f_t)^2} = 0$, there exists $x>0$ large enough such that $t_x=1$. This implies $\lambda < +\infty$.

We note that for any $x > 0$ such that $g^x > f$ on $[0,1]$, applying Corollary \ref{cor:lowboundproba} we obtain
\begin{align*}
  \liminf_{n \to +\infty} n^{-1/3} \log \P\left[ \{u \in \T^{(n)}_f : |u|=n\} \neq \emptyset \right]
  &\geq \liminf_{n \to +\infty} n^{-1/3} \log \P\left[ Z^{(n)}_{f,g^x}(f_1),g^x_1) \geq 1 \right]\\
  &\geq - x.
\end{align*}
Therefore, we have $\liminf_{n \to +\infty} n^{-1/3} \log \P\left[ \{u \in \T^{(n)}_f : |u|=n\} \neq \emptyset \right] \geq - \min(\lambda,0)$.

If $\lambda \geq 0$, writing $t = t_\lambda$, we use the fact that at some time before $t_\lambda$ every individual in $\T^{(n)}_f$ crosses $n^{1/3} g_{./n}$ before time $tn$, thus
\[
  \P\left( \exists |u| = n : u \in \T^{(n)}_f \right) \leq \P\left( \exists u \in \T^{(n)}_f : V(u) \geq n^{1/3} g_{|u|/n} \right).
\]
We set $f^{(1)}_s = f_{st}/t^{1/3}$ and $g^{(1)}_s = g^\lambda_{st}/t^{1/3}$. Applying Lemma \ref{lem:frontier}, and writing $m=\floor{tn}$ we have
\[
  \limsup_{n \to +\infty} n^{-1/3} \log \E\left(Y^{(m)}_{f^{(1)},g^{(1)}}\right) \leq - \lambda,
\]
which by Markov inequality yields
\[
  \limsup_{n \to +\infty} n^{-1/3} \log \P\left( u \in \T_f : |u| \leq tn, V(u) \geq n^{1/3} g_{|u|/n} \right) \leq - \lambda,
\]
concluding the proof of \eqref{eqn:finlambda}.

We now assume $\lambda < 0$, or equivalently $g^0 > f$. Applying Lemma \ref{lem:frontier}, for any $\epsilon>0$ we have
\[
  \limsup_{n \to +\infty} n^{-1/3} \log \P\left( \exists u \in \T^{(n)}_f : V(u) \geq n^{1/3} g^\epsilon_{|u|/n} \right) \leq -\inf_{t \in [0,1]} g^\epsilon_t + H_t(f,g^\epsilon) = - \epsilon.
\]
By the Borel-Cantelli lemma, almost surely for any $n \geq 1$ large enough, we have
\begin{equation}
  \label{eqn:frontiereps}
  \left\{ u \in \T^{(n)}_f : V(u) \geq n^{1/3} g^\epsilon_{|u|/n} \right\} = \emptyset.
\end{equation}
In particular, letting $\epsilon \to 0$ we have
\[
  \limsup_{n \to +\infty} \frac{1}{n^{1/3}} \max_{u \in \T^{(n)}_f , |u|=n} V(u) = g_1 \quad \mathrm{a.s.}
\]
Moreover, by Lemma \ref{lem:meannumber} we have
\[
  \limsup_{n \to +\infty} \frac{1}{n^{1/3}} \log \E\left[ Z^{(n)}_{f,g^\epsilon}(f_1,g_1^\epsilon) \right] \leq - (f_1 + H_1(f,g^\epsilon)) = g^\epsilon_1-f_1 - \epsilon. 
\]
Thus, by the Markov inequality and the Borel-Cantelli Lemma
\[
  \limsup_{n\to +\infty} n^{-1/3} \log Z^{(n)}_{f,g^\epsilon}(f_1,g^\epsilon_1) \leq g^\epsilon_1 - f_1.
\]
Mixing with \eqref{eqn:frontiereps} and letting $\epsilon \to 0$, we conclude
\[
  \limsup_{n \to +\infty} \frac{1}{n^{1/3}} \log \# \left\{ u \in \T^{(n)}_f : |u| = n \right\} \leq g_1 - f_1.
\]

To obtain the other bounds of \eqref{eqn:boundsfortf}, we apply Lemma \ref{lem:lbsizepop}. For any $\epsilon>0$ there exists $\rho>1$ and $\delta>0$  such that almost surely for any $n \geq 1$ large enough,
\[
  \# \left\{ u \in \T^{(n)}_f : |u| = \floor{\delta n^{1/3}} \text{ and } V(u) \in [ -\epsilon n^{1/3}, \epsilon n^{1/3} ] \right\} \geq \rho^{\delta n^{1/3}}.
\]
We write $S_n$ this event. On $S_n$, each of these $\rho^{\delta n^{1/3}}$ individuals starts an independent branching random walk from some point in $[-\epsilon n^{1/3},\epsilon n^{1/3}]$ with a killing boundary $n^{1/3} f_{./n}$. For $\epsilon$ small enough, we use Corollary \ref{cor:sizepop} to bound from below the number of descendants that stay between $f+2\epsilon$ and $g^{-2\epsilon}+2\epsilon$. We have
\begin{multline*}
  \limsup_{n \to +\infty} n^{-1/3} \log \P\left[ \left.\# \left\{ u \in \T^{(n)}_f : |u| = n \right\} \leq e^{n^{1/3}(g^{-2\epsilon}_1-f_1)} \right| S_n \right]\\
  \leq - \eta + \sup_{t \in [0,1]} g^{-2\epsilon}_t + 2 \epsilon + H_t(f+2\epsilon, g^{-2\epsilon} + 2 \epsilon) = -\eta.
\end{multline*}
Using again the Borel-Cantelli lemma, we obtain
\[
  \liminf_{n \to +\infty} n^{-1/3} \log \# \left\{ u \in \T^{(n)}_f : |u| = n \right\}  \geq g^{-2\epsilon}_1-f_1 \quad \mathrm{a.s.}
\]
Consequently, letting $\epsilon \to 0$ we conclude
\[
  \lim_{n \to +\infty} n^{-1/3} \log \# \left\{ u \in \T^{(n)}_f : |u| = n \right\} = g^0_1-f_1 \quad \mathrm{a.s.}
\]
In particular, almost surely for $n \geq 1$ large enough, $\T^{(n)}_f$ survives until time $n$, which is enough to prove
\[
  \liminf_{n \to +\infty} \frac{1}{n^{1/3}}\min_{u \in \T^{(n)}_f , |u|=n} V(u) \geq f_1 \quad \mathrm{a.s.}
\]

We observe by Corollary \ref{cor:lowboundproba} that for any $\epsilon>0$ small enough, for any $f_1 +2\epsilon < x < y < g^{-2\epsilon}_1 + 2\epsilon$ we have
\[
  \liminf_{n \to +\infty} n^{-1/3} \log \P\left( Z^{(n)}_{f+2\epsilon,g^{-2\epsilon}+2\epsilon}(x,y) > 0 \right) \geq 0.
\]
Therefore, for any $f_1 < x < y < g_1$, for any $\epsilon>0$ small enough we have
\[
  \P\left( \left. Z^{(n)}_{f,g}(x,y) = 0 \right| S_n \right) = \left( 1 - e^{o(n^{1/3})} \right)^{e^\eta n^{1/3}}.
\]
We conclude that for any $\zeta > 0$ small enough,
\[
  \liminf_{n \to +\infty} n^{-1/3} \log \left( - \log \P\left(Z^{(n)}_{f,g}(f_1 + \zeta, f_1 + 2 \zeta) = 0 \right) \right) > 0
\]
as well as
\[
  \liminf_{n \to +\infty} n^{-1/3} \log \left( - \log \P\left(Z^{(n)}_{f,g}(g_1 - 2 \zeta, g_1 - \zeta) = 0 \right) \right) > 0.
\]
Using once again the Borel-Cantelli lemma, we obtain respectively
\[
  \limsup_{n \to +\infty} \frac{1}{n^{1/3}}\min_{u \in \T^{(n)}_f , |u|=n} V(u) \leq f_1 \quad \mathrm{a.s.}
\]
and
\[
  \liminf_{n \to +\infty} \frac{1}{n^{1/3}}\max_{u \in \T^{(n)}_f , |u|=n} V(u) \geq g_1^0 \quad \mathrm{a.s.}
\]
which concludes the proof.
\end{proof}

\subsection{Applications}
\label{subsec:applications}

Using the results developed in this section, we deduce the asymptotic behaviour of the consistent maximal displacement at time $n$ of the branching random walk.
\begin{theorem}[Consistent maximal displacement of the branching random walk, \cite{FaZ10,FHS12}]
\label{thm:fzfhs}
We consider a branching random walk $(\T,V)$ satisfying \eqref{eqn:reproduction}, \eqref{eqn:critical} and \eqref{eqn:integrability}. We have
\[
  \lim_{n \to +\infty} \frac{\max_{|u|=n} \min_{k \leq n} V(u_k)}{n^{1/3}} = - \left( \frac{3\pi^2\sigma^2}{2} \right)^{1/3}.
\]
\end{theorem}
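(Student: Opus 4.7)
The plan is to apply Theorem \ref{thm:Tfbehaviour} with a constant barrier $f \equiv -c$, for varying $c > 0$, and identify the critical $c$. Note that $\max_{|u|=n} \min_{k \leq n} V(u_k) \geq -cn^{1/3}$ if and only if $\{u \in \T^{(n)}_{-c} : |u| = n\} \neq \emptyset$, so the theorem reduces to determining the threshold between survival and extinction of $\T^{(n)}_{-c}$ on the $n^{1/3}$ scale.

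First I would explicitly solve the ODE \eqref{eqn:defineg}. With $f_t \equiv -c$ and $h_t := g_t + c$, the fixed-point equation becomes $h_t^2 h_t' = -\pi^2 \sigma^2/2$, whose solutions are
\[
  h_t = \left( h_0^3 - \tfrac{3\pi^2 \sigma^2}{2} t \right)^{1/3}.
\]
The requirement $g_t > f_t$ on $[0,1]$ amounts to $h_t > 0$ on $[0,1]$, which, since $h$ is decreasing, is equivalent to $h_0 > c^* := \left( \tfrac{3\pi^2 \sigma^2}{2} \right)^{1/3}$. Since $g_0 = h_0 - c$, the critical case $g_0 = 0$ admits a valid solution if and only if $c > c^*$, and in general the quantity $\lambda$ from \eqref{eqn:deflambda} equals $\lambda = \max(c^* - c, 0)$ if one takes $f \equiv -c$ with $c > 0$.

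Next, for any $c > c^*$, Theorem \ref{thm:Tfbehaviour} gives that almost surely for $n$ large enough, $\{u \in \T^{(n)}_{-c} : |u|=n\} \neq \emptyset$. Hence
\[
  \liminf_{n \to +\infty} \frac{\max_{|u|=n} \min_{k \leq n} V(u_k)}{n^{1/3}} \geq -c \quad \text{a.s.},
\]
and letting $c \downarrow c^*$ yields the lower bound $-c^*$. Conversely, for any $c < c^*$, the same theorem gives
\[
  \P\left( \{u \in \T^{(n)}_{-c} : |u| = n\} \neq \emptyset \right) = \exp\left( -(c^* - c + o(1)) n^{1/3} \right),
\]
which is summable in $n$. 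The Borel--Cantelli lemma then implies that almost surely for $n$ large enough, no individual of generation $n$ has stayed above $-c n^{1/3}$, so
\[
  \limsup_{n \to +\infty} \frac{\max_{|u|=n} \min_{k \leq n} V(u_k)}{n^{1/3}} \leq -c \quad \text{a.s.},
\]
and letting $c \uparrow c^*$ yields the matching upper bound $-c^*$.

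There is no serious obstacle: the ODE is separable and solves in closed form, and Theorem \ref{thm:Tfbehaviour} already packages all the branching-random-walk analysis. The only minor care needed is to verify that $f \equiv -c$ meets the hypotheses of that theorem (namely $f_0 < 0$ and $\limsup_{t\to 0} f_t/t < +\infty$, both trivial for $c > 0$), and to justify applying Borel--Cantelli along the sequence of events indexed by $n$ on the same underlying branching random walk, which is immediate from summability.
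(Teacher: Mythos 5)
Your proposal is correct and follows essentially the same route as the paper: apply Theorem \ref{thm:Tfbehaviour} to a constant barrier, solve the separable ODE to locate the critical level $\left(\frac{3\pi^2\sigma^2}{2}\right)^{1/3}$, use the almost-sure survival statement for subcritical barriers and the exponential decay \eqref{eqn:finlambda} plus Borel--Cantelli for supercritical ones. The only (harmless) imprecision is the formula $\lambda=\max(c^*-c,0)$, since for $c>c^*$ the infimum in \eqref{eqn:deflambda} is the negative number $c^*-c$; this plays no role because you only invoke $\lambda$ when $c<c^*$.
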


\begin{proof}
To prove this result, we only have to show that for any $\delta > 0$, almost surely for $n \geq 1$ large enough we have
\[
  \left\{ u \in \T^{(n)}_{\left(-\frac{3\pi^2\sigma^2}{2} \right)^{1/3}+\delta} : |u|=n \right\} = \emptyset \quad \mathrm{and} \quad \left\{ u \in \T^{(n)}_{\left(-\frac{3\pi^2\sigma^2}{2} \right)^{1/3}-\delta} : |u|=n \right\} \neq \emptyset.
\]
We solve for $x < 0$ the differential equation
\[
  g_t = - \frac{\pi^2 \sigma^2}{2} \int_0^t \frac{ds}{(g_s - x)^2},
\]
which is $g_t = x + \left( -x^3 - \frac{3\pi^2 \sigma^2}{2} t \right)^{1/3}$ for $t < \frac{-2x^3}{3 \pi^2 \sigma^2}$. By Theorem \ref{thm:Tfbehaviour}, for any $x >  -\left( \frac{3\pi^2\sigma^2}{2} \right)^{1/3}$, almost surely for any $n \geq 1$ large enough the tree $\T^{(n)}_x$ gets extinct before time $n$. For any $x < -\left( \frac{3\pi^2\sigma^2}{2} \right)^{1/3}$, almost surely for $n \geq 1$ large enough the tree $\T^{(n)}_x$ survives until time $n$.
\end{proof}

Similarly, we provide the asymptotic behaviour, as $\epsilon \to 0$ of the probability of survival of a branching random walk with a killing boundary of slope $-\epsilon$.
\begin{theorem}[Survival probability in the killed branching random walk \cite{GHS11}]
\label{thm:ghs}
Let $(\T,V)$ be a branching random walk satisfying \eqref{eqn:reproduction}, \eqref{eqn:critical} and \eqref{eqn:integrability}. We have
\[
  \lim_{\epsilon \to 0} \epsilon^{1/2} \log \P\left( \forall n \in \N, \exists |u|=n : V(u_j) \geq -\epsilon j, j \leq n  \right) = -\frac{\pi \sigma}{2^{1/2}}.
\]
\end{theorem}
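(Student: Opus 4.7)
My plan is to prove both bounds by reducing to Theorem \ref{thm:Tfbehaviour} applied to a suitably rescaled linear killing boundary. Write $p(\epsilon)$ for the probability in the statement and $\ell := \epsilon^{-1/2}$. For each fixed $c > 0$, take $n = n_c(\epsilon) := \floor{(c/\epsilon)^{3/2}}$, so that $\epsilon n^{2/3} \to c$, $n^{1/3} \sim c^{1/2}\ell$, and the barrier $-\epsilon j$ at generation $j \leq n$ equals $n^{1/3} f_c(j/n)$ for $f_c(t) := -ct$. Since surviving the barrier forever implies surviving up to generation $n$,
\[
  p(\epsilon) \leq \P\bigl(\{u \in \T^{(n)}_{f_c} : |u| = n\} \neq \emptyset\bigr),
\]
and Theorem \ref{thm:Tfbehaviour} yields $\limsup_{\epsilon \to 0} \epsilon^{1/2}\log p(\epsilon) \leq -\sqrt{c}\,\lambda(c)$, where $\lambda(c)$ is the critical exponent \eqref{eqn:deflambda} associated to $f_c$.

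To compute $\lambda(c)$ I would solve \eqref{eqn:defineg}, which here reads $g'(t) = -\pi^2\sigma^2/(2(g(t)+ct)^2)$ with $g + ct > 0$ on $[0,1]$. The substitution $h := g + ct$ autonomises it to $h'(t) = c - \pi^2\sigma^2/(2h(t)^2)$, with equilibrium $h^* := \pi\sigma/\sqrt{2c}$. For $h(0) > h^*$ the solution increases and exists on $[0,1]$, while for $h(0) < h^*$ it decreases to $0$ at a finite time $t^*(h(0))$. Thus $\lambda(c)$ is the unique $\lambda \in (0, h^*)$ with $t^*(\lambda) = 1$. Separating variables and setting $u = h/h^*$ gives
\[
  1 = t^*(\lambda) = \frac{(h^*)^3}{k}\int_0^{\lambda/h^*}\frac{u^2}{1-u^2}\,du, \qquad k := \frac{\pi^2\sigma^2}{2},
\]
and $(h^*)^3/k = \pi\sigma/(\sqrt{2}\,c^{3/2}) \to 0$ as $c \to \infty$ forces the integral to diverge, so $\lambda(c)/h^*(c) \to 1$ and therefore $\sqrt{c}\,\lambda(c) \to \pi\sigma/\sqrt{2}$. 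Sending $c \to \infty$ in the previous display yields $\limsup_{\epsilon \to 0} \epsilon^{1/2}\log p(\epsilon) \leq -\pi\sigma/\sqrt{2}$.

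For the matching lower bound I would apply Corollary \ref{cor:lowboundproba} with $f = f_c$ and any $g$ satisfying \eqref{eqn:defineg} with $g_0 = \lambda(c) + \eta$ for small $\eta > 0$ and $g_1 > -c$; such $g$ exists by definition of $\lambda(c)$. This produces, with probability at least $e^{-(\lambda(c)+\eta)n^{1/3}(1+o(1))}$, an individual $u$ at generation $n$ with $V(u) \geq x\,n^{1/3}$ for some $x > -c$, that is, standing at height $H := (x+c)n^{1/3} \to \infty$ above the current position of the barrier. After translating to $u$, the condition $V(v) \geq -\epsilon|v|$ on descendants becomes $W(v) \geq -H$ for the shifted branching random walk $W := V + \epsilon|\cdot|$, which has strictly positive drift $\epsilon$. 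Bounding this translated survival probability from below by a positive constant uniformly in small $\epsilon$, and then letting $\eta \to 0$ and $c \to \infty$, will give $\liminf_{\epsilon \to 0}\epsilon^{1/2}\log p(\epsilon) \geq -\pi\sigma/\sqrt{2}$.

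The main obstacle is precisely this uniform lower bound: replacing the subtree survival probability by $p(\epsilon)$ itself is circular, so I must exploit that $W$ is strictly supercritical. My plan is to combine Lemma \ref{lem:lbsizepop} applied to $W$ (which yields $\rho^k$ descendants whose trajectories remain within $O(k)$ of the origin) with a Borel--Cantelli argument along a rapidly growing sequence of generations, forcing $\{W(v) \geq -H\}$ to hold along an infinite lineage with probability converging to the Galton--Watson survival probability $1 - q > 0$ as $H \to \infty$. Since this step costs only a multiplicative constant, it contributes an $o(\ell)$ term to $\log p(\epsilon)$ and does not alter the final asymptotic rate.
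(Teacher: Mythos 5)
Your upper bound is essentially correct. The reduction $p(\epsilon)\le\P\bigl(\{u\in\T^{(n)}_{f_c}:|u|=n\}\neq\emptyset\bigr)$ with $n=\floor{(c/\epsilon)^{3/2}}$, the identification of the critical exponent via the autonomous ODE $h'=c-\pi^2\sigma^2/(2h^2)$, and the asymptotics $\sqrt{c}\,\lambda(c)\to\pi\sigma/\sqrt{2}$ are all sound, and this is a clean alternative to the paper's route, which avoids the ODE altogether by feeding the explicit test curve $g_t=\lambda(1-t)^{1/3}-\theta t$ into Lemma \ref{lem:frontier}. One caveat: Theorem \ref{thm:Tfbehaviour} is stated under $f_0<0$, while your $f_c$ has $f_c(0)=0$, so strictly you are citing it outside its hypotheses; this is easily patched (use $f(t)=-\delta-ct$ and let $\delta\to0$, or invoke Lemma \ref{lem:frontier} directly, as the paper does), but it should be said.

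The lower bound, however, has a genuine gap, exactly at the step you flag. With $g_0=\lambda(c)+\eta$ the function $h=g+ct$ starts below the equilibrium $h^*=\pi\sigma/\sqrt{2c}$, hence decreases, so the terminal height above the barrier produced by Corollary \ref{cor:lowboundproba} is $H=h(1)\,n^{1/3}=\kappa\,\epsilon^{-1/2}$ with $\kappa=h(1)\sqrt{c}<\pi\sigma/\sqrt{2}$, and in fact $\kappa\to0$ as $\eta\to0$. From such a height the continuation probability is \emph{not} bounded below uniformly in $\epsilon$: applying your own upper-bound argument from the shifted starting point (now with a legitimately negative $f_0=-\kappa/\sqrt{c}$), the probability of surviving the barrier forever from height $\kappa\epsilon^{-1/2}$ is at most $\exp\bigl(-\bigl(\pi\sigma/\sqrt{2}-\kappa+o(1)\bigr)\epsilon^{-1/2}\bigr)$, i.e.\ exponentially small in $\epsilon^{-1/2}$ whenever $\kappa<\pi\sigma/\sqrt{2}$. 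So the second stage does not cost ``only a multiplicative constant''; a constant-order continuation would require reaching height at least roughly $\tfrac{\pi\sigma}{\sqrt{2}}\epsilon^{-1/2}$, which your choice of $g$ does not give, and even then the uniform constant bound is a statement of essentially the same difficulty as the theorem itself. The tools you propose cannot supply it: Lemma \ref{lem:lbsizepop} gives $\rho^k$ descendants only inside an envelope of slope $-a$ with $a$ a fixed constant much larger than $\epsilon$, so those particles are below $-H-\epsilon j$ after about $H/a$ generations, and a Borel--Cantelli argument along growing generations yields almost-sure statements for a fixed law, not quantitative bounds along the coupled regime $H=H(\epsilon)$, $\epsilon\to0$. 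The paper's proof is structured precisely to avoid needing a constant-probability continuation: it concatenates blocks consisting of finite-horizon survival at slope $\epsilon$ (probability of order $e^{-\lambda\epsilon^{-1/2}}$, from Corollary \ref{cor:lowboundproba} with the constant-width corridor $g_t=\lambda-\theta t$) followed by a short burst producing $\rho^{\delta n}$ particles inside a slope $-a$ envelope (probability at least $r$ uniformly), the concatenation having overall slope $-2\epsilon$; this dominates a Galton--Watson process with offspring $N_\epsilon$ with probability $p_\epsilon$, whose survival probability is shown to be at least $h\,p_\epsilon$ by a generating-function estimate, so the exponential rate is carried by $p_\epsilon$ itself. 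Some iteration of this kind is needed to close your argument; as written, the final step fails.
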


\begin{proof}
For any $\epsilon>0$ and $n \in \N$, we set $\rho(n,\epsilon) = \P\left( \exists |u|=n : V(u_j) \geq - \epsilon j, j \leq n \right)$ and
\[
  \rho(\epsilon) = \lim_{n \to +\infty} \rho(n,\epsilon) = \P\left( \forall n \in \N, \exists |u|=n : V(u_j) \geq -\epsilon j, j \leq n  \right).
\]
In a first time, we prove that for any $\theta > 0$,  we have
\begin{equation}
  \label{eqn:lem46ghs}
  -\frac{\pi \sigma}{(2\theta)^{1/2}} \leq \liminf_{n \to +\infty} n^{-1/3} \log \rho\left( n, \theta n^{-2/3} \right) \leq \limsup_{n \to +\infty} n^{-1/3} \log \rho\left( n,\theta n^{-2/3} \right) \leq \Phi^{-1}(\theta),
\end{equation}
where $\Phi : \lambda \mapsto \frac{\pi^2 \sigma^2}{2\lambda^2} - \frac{\lambda}{3}$.

Applying Lemma \ref{lem:frontier} with functions $f : t \mapsto -\theta t$ and $g : t \mapsto \lambda (1 -t)^{1/3} - \theta t$ we prove the upper bound of \eqref{eqn:lem46ghs}. Using the fact that an individual staying above $f^{(n)}$ until time $n$ crosses $g^{(n)}$ at some time $k \leq n$, the Markov inequality implies
\begin{align*}
  \limsup_{n \to +\infty} n^{-1/3} \log \rho(n,\theta n^{-2/3})&\leq \limsup_{n \to +\infty} n^{-1/3} \log \E(Y^{(n)}_{f,g})\\
  &\leq -\inf_{t \in [0,1]} g_t + H_t(f,g)\\
  &\leq -\inf_{t \in [0,1]} \lambda (1 -t)^{1/3} - \theta t + \frac{\pi^2 \sigma^2}{2} \int_0^t \frac{ds}{(\lambda (1 -s)^{1/3})^2}\\
  &\leq -\inf_{t \in [0,1]} \lambda - \theta t + 3 \Phi(\lambda) \left[ 1 - (1 - t)^{1/3} \right].
\end{align*}
We observe that $t \mapsto 1 - (1 - t)^{1/3}$ is a convex function on $[0,1]$, with derivative $1/3$ at $t=0$. Thus, for any $\lambda > 0$ such that $\Phi(\lambda)>0$, for all $t \in [0,1]$, $3\Phi(\lambda) \left[ 1 - (1 - t)^{1/3} \right] \geq \Phi(\lambda) t$. We conclude that for any $\lambda > 0$ such that $\Phi(\lambda) \geq \theta > 0$, we have
\[
  \limsup_{n \to +\infty} n^{-1/3} \log \rho(n,\theta n^{-2/3}) \leq -\lambda.
\]
With $\lambda = \Phi^{-1}(\theta)$, we conclude the proof of the upper bound of \eqref{eqn:lem46ghs}. We now observe that for any $\epsilon>0$, we have $\rho(\epsilon) \leq \rho(n,\epsilon)$. Setting $n = \floor{(\theta/\epsilon)^{3/2}}$, for any $\theta > 0$ we have
\[
  \limsup_{\epsilon \to 0} \epsilon^{1/2} \log \rho(\epsilon) \leq \limsup_{\epsilon \to 0} \epsilon^{1/2} \log \rho(n,\epsilon) \leq -\theta^{1/2} \Phi^{-1}(\theta).
\]
We note that $\lim_{\theta \to +\infty} \theta^{1/2} \Phi^{-1}(\theta) = \lim_{\lambda \to 0} \lambda \Phi(\lambda)^{1/2} = \frac{\pi \sigma}{2^{1/2}}$, which concludes the proof of the upper bound in Theorem \ref{thm:ghs}.

To prove the lower bound in \eqref{eqn:lem46ghs}, we apply Corollary \ref{cor:lowboundproba} to functions $f : t \mapsto - \theta t$ and $g : t \mapsto \lambda - \theta t$. We have
\begin{align*}
  \liminf_{n \to +\infty} n^{-1/3} \log \rho(n,\theta n^{-2/3}) &\geq \liminf_{n \to +\infty} n^{-1/3} \log \P\left( Z^{(n)}_{f,g}(f_1,g_1) \geq 1\right)\\
  &\geq -\sup_{t \in [0,1]} \lambda - \theta t + \frac{\pi^2\sigma^2}{2\lambda^2} t. 
\end{align*}
Choosing $\lambda = \frac{\pi \sigma}{(2 \theta)^{1/2}}$, we obtain
\[
  \liminf_{n \to +\infty} n^{-1/3} \log \rho(n,\theta n^{-2/3}) \geq - \frac{\pi \sigma}{(2\theta)^{1/2}},
\]
proving the lower bound of \eqref{eqn:lem46ghs}. To extend this lower bound into the lower bound in Theorem \ref{thm:ghs} needs more care than the upper bound. First, we observe that this equation implies that for any $\theta > 0$,
\[
  \liminf_{n \to +\infty} n^{-1/3} \log \rho(\theta^{3/2} n, n^{-2/3}) \geq - \frac{\pi \sigma}{2^{1/2}}.
\]

By \eqref{eqn:reproduction}, there exist $a>0$ and $P \in \N$ such that $\E\left( \left(\sum_{|u|=1} \ind{V(u) \geq -a}\right)\wedge P \right)>1$. Consequently, there exists $\rho>1$ and a random variable $W$ positive with positive probability such that
\[
  \liminf_{n \to +\infty} \frac{\# \left\{ |u| = n : \forall j \leq n, V(u_j) \geq - aj \right\}}{\rho^n} \geq W \quad \mathrm{a.s.}
\]
We conclude there exists $a>0$, $r>0$ and $\rho>1$ such that
\[
  \inf_{n \in \N} \P\left(\# \left\{ |u| = n : \forall j \leq n, V(u_j) \geq - aj \right\} \geq \rho^n \right) \geq r.
\]
With these notations, we observe that for any $\theta > 0$, $\epsilon>0$, $\delta > 0$ and $n \in \N$, we have
\[
  \P\left( \# \left\{ |u| = (\theta + \delta) n : \forall j \leq n, V(u_j) \geq - \left( \frac{\theta \epsilon + \delta a}{\theta + \delta} \right) j \right\} \geq \rho^{\delta n} \right) \geq r\rho\left(\theta n, \epsilon\right).
\]
Given $\lambda > \frac{\pi \sigma}{2^{1/2}}$ and $\theta >0$, we set $\epsilon>0$ small enough such that
\[\epsilon^{1/2} \log \rho\left(\ceil{2 \theta^2 \epsilon^{-3/2}},\epsilon\right)>-\lambda.\]
We write $\delta = \frac{\theta\epsilon}{a-2\epsilon}$ and $n = \floor{(\theta+\delta) \epsilon^{-3/2}}$, choosing $\epsilon>0$ small enough such that $\delta < \theta$. We have
\[
  \P\left( \# \left\{ |u| = n : \forall j \leq n, V(u_j) \geq - 2 \epsilon j \right\} \geq \rho^{\delta n} \right) \geq re^{-\lambda \epsilon^{-1/2}},
\]
We construct a Galton-Watson process $(G_p(\epsilon), p \geq 0)$ based on the branching random walk $(\T,V)$ such that
\[
  G_p(\epsilon) = \#\left\{ |u|=pn : \forall j \leq pn, V(u_j) \geq - 2 \epsilon j \right\}.
\]
We observe that $G(\epsilon)$ stochastically dominates a Galton-Watson process $\tilde{G}(\epsilon)$, in which individuals make $N_\epsilon = \floor{\rho^{\delta n}}$ children with probability $p_\epsilon = r e^{-\lambda \epsilon^{-1/2}}$ and none with probability $1-p$. As $\epsilon \to 0$ we have
\[
  \lim_{\epsilon \to 0} \epsilon^{1/2} \log (p_\epsilon N_\epsilon) =  -\lambda + \frac{\theta^2 \log \rho}{a},
\]
which is positive choosing some $\theta > 0$ large enough. With this choice of $\theta$, for any $\epsilon>0$ small enough $p_\epsilon N_\epsilon > 2$. Consequently $q_\epsilon$ the probability of survival of $\tilde{G}(\epsilon)$ is positive for any $\epsilon>0$ small enough. Moreover, we have $\rho(2\epsilon) \geq q_\epsilon$.

We introduce $f_\epsilon : s \mapsto \E(s^{\tilde{G}(\epsilon)})$ which is a convex function verifying
\[f_\epsilon(1)=1 \text{ and } f_\epsilon(1 - q_\epsilon)=1-q_\epsilon.\]
For any $h>0$, for any $\epsilon>0$ small enough
\[
  f_\epsilon(1 - hp_\epsilon) = 1 - p_\epsilon + p_\epsilon (1 - hp_\epsilon)^{N_\epsilon} \leq 1 - p_\epsilon + p_\epsilon\exp(-hp_\epsilon N_\epsilon) \leq 1 - p_\epsilon + p_\epsilon e^{-2h}.
\]
Choosing $h>0$ small enough, for any $\epsilon>0$ small enough we have $f_\epsilon(1-hp)< 1 - hp$. This proves that $q_\epsilon>hp_\epsilon$, leading to
\[
  \liminf_{\epsilon \to 0} \epsilon^{1/2} \log \rho(\epsilon) \geq \liminf_{\epsilon \to 0} \epsilon^{1/2} \log p_\epsilon \geq - \lambda.
\]
Letting $\lambda \to - \frac{\pi \sigma}{2^{1/2}}$ concludes the proof.
\end{proof}

\section{Branching random walk with selection}
\label{sec:selection}

In this section, we consider a branching random walk on $\R$ in which at each generation only the rightmost individuals live. Given a positive continuous function $h$, at any time $k \leq n$ only the $\floor{e^{n^{1/3}h_{k/n}}}$ rightmost individuals remain alive. The process is constructed as follows. Let $((\T^p, V^p), p \in \N)$ be an i.i.d. sequence of independent branching random walks, for any $n \in \N$ we write $\calT_{(n)}$ for the disjoint union of $\T^p$ for $p \leq n$, and $V : u \in \calT_{(n)} \mapsto V^p(u)$ if $u \in \T^p$. We rank individuals at a given generation according to their position, from highest to lowest, breaking ties uniformly at random. For any $u \in \calT_{(n)}$, we write $N_{(n)}(u)$ for the ranking of $u$ in the $|u|^\mathrm{th}$ generation.

Let $h$ be a positive continuous function on $[0,1]$, we write $q = \floor{e^{h_0 n^{1/3}}}$ and
\[
  \T^h_{(n)} = \left\{ u \in \calT_{(q)} : |u| \leq n, \forall j \leq |u|, \log N_{(q)}(u_j) \leq n^{1/3} h_{j/n} \right\}.
\]
The process $(\T^h_{(n)},V)$ is a branching random walk with selection of the $e^{n^{1/3}h_{\cdot}}$ rightmost individuals. We write
\[
  M_n^h = \max_{u \in \T^h_{(n)}, |u| = n} V(u) \quad \mathrm{and} \quad m_n^h = \min_{u \in \T^h_{(n)}, |u|=n} V(u).
\]
We study $(\T^h_{(n)},V)$ by comparing it with $q$ independent branching random walks with a killing frontier $f$, choosing $f$ in a way that
\[
  \log \# \left\{ u \in \T^{(n)}_f : |u| = \floor{tn} \right\} \approx n^{1/3} (h_t-h_0).
\]
Using Lemmas \ref{lem:frontier} and \ref{lem:meannumber}, we choose functions $(f,g)$ verifying
\[ \forall t \in [0,1],
  \begin{cases}
    g_t + \frac{\pi^2\sigma^2}{2} \int_0^t \frac{ds}{(g_s - f_s)^2} = h_0\\
    f_t + \frac{\pi^2 \sigma^2}{2} \int_0^t \frac{ds}{(g_s - f_s)^2} = h_0-h_t.
  \end{cases}
\]
which solution is
\begin{equation}
  \label{eqn:fandg}
  f : t \in [0,1] \mapsto h_0 -h_t - \frac{\pi^2 \sigma^2}{2} \int_0^t \frac{ds}{h_s^2} \quad \mathrm{and} \quad g : t \in [0,1] \mapsto h_0 - \frac{\pi^2 \sigma^2}{2} \int_0^t \frac{ds}{h_s^2}.
\end{equation}

To compare branching random walk with selection and branching random walks with killing boundary, we couple them in a fashion preserving a certain partial order, that we describe now. Let $\mu,\nu$ be two Radon measures on $\R$, we write
\[
  \mu \preccurlyeq \nu \iff \forall x \in \R, \mu((x,+\infty)) \leq \nu((x,+\infty)).
\]
The relation $\preccurlyeq$ forms a partial order on the set of Radon measures, that can be used to rank populations, representing an individual by a Dirac mass at its position.

A branching-selection process is defined as follows. Given $\phi : \Z_+ \to \N$ a process adapted to the filtration of $\calT(\phi_0)$, we denote by
\[
  \T^\phi = \left\{ u \in \calT_{(\phi_0)} : \forall j \leq |u|, N_{(\phi_0)}(u_j) \leq \phi_j \right\}.
\]
Let $(x_1, \ldots x_{\phi_0}) \in \R^{\phi_0}$, we write $V : u \in \T^\phi \mapsto x_p + V^p(u)$ if $u \in \T^p$. The process $(\T^\phi,V)$ is a branching-selection process with $\phi(n)$ individuals at generation $n$ and initial positions $(x_1,\ldots x_{\phi_0})$. Note that both $\T^h_{(n)}$ and $\T_f^{(n)}$ can be described as branching-selection processes. We prove there exists a coupling between branching-selection processes preserving partial order $\preccurlyeq$. Note this lemma is essentially an adaptation of \cite[Corollary 2]{BeG10}.
\begin{lemma}
\label{lem:coupling}
Let $\phi$ and $\psi$ be two adapted processes, on the event
\[
  \left\{\sum_{\substack{u \in \T^\phi\\ |u|=0}} \delta_{V(u)} \preccurlyeq \sum_{\substack{u \in \T^\psi\\ |u|=0}} \delta_{V(u)} \quad \mathrm{and} \quad \forall j \leq n, \phi_j \leq \psi_j\right\},
\]
we have $\sum_{u \in \T^\phi, |u|=n} \delta_{V(u)} \preccurlyeq \sum_{u \in \T^\psi, |u|=n} \delta_{V(u)}$.
\end{lemma}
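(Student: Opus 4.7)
The plan is to proceed by induction on the generation $0 \leq k \leq n$, building the coupling step by step. The base case $k = 0$ is exactly the hypothesis. For the inductive step, sort the positions at generation $k$ in decreasing order, writing $a_1 \geq \cdots \geq a_{\phi_k}$ for $\T^\phi$ and $b_1 \geq \cdots \geq b_{\psi_k}$ for $\T^\psi$: together with $\phi_k \leq \psi_k$, the relation $\preccurlyeq$ is equivalent to the pointwise inequalities $a_i \leq b_i$ for every $i \leq \phi_k$, which is the shape that the coupling must preserve.

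The coupling of the reproductions at generation $k+1$ is then natural: draw i.i.d.\ point processes $L_1, \ldots, L_{\psi_k}$ of law $\calL$ and use $L_i$ both for the $i$-th highest individual of $\T^\psi$ (for every $i \leq \psi_k$) and for the $i$-th highest individual of $\T^\phi$ (when $i \leq \phi_k$). Because $a_i \leq b_i$, the translated offspring measure $\sum_{\ell \in L_i} \delta_{a_i+\ell}$ is dominated by $\sum_{\ell \in L_i} \delta_{b_i+\ell}$; summing over $i \leq \phi_k$ and adjoining the offspring of individuals $\phi_k+1, \ldots, \psi_k$ in $\T^\psi$ (which can only augment right-tail counts on the $\T^\psi$ side) yields pre-selection dominance at generation $k+1$.

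It remains to verify that top-selection preserves $\preccurlyeq$: if $A \preccurlyeq B$ are finite atomic measures and $p \leq q$ are positive integers, then restricting $A$ to its top $p$ atoms (call it $A'$) and $B$ to its top $q$ atoms (call it $B'$) keeps the dominance. Fix $x \in \R$. If the $p$-th highest atom of $A$ lies strictly above $x$, then $|A \cap (x,\infty)| \geq p$, so $|B \cap (x,\infty)| \geq p$ by $A \preccurlyeq B$, and since $q \geq p$ the top $q$ atoms of $B$ contain its top $p$, which are all above $x$, giving $|B' \cap (x,\infty)| \geq p = |A' \cap (x,\infty)|$. Otherwise, all atoms of $A$ above $x$ are retained in $A'$, so $|A' \cap (x,\infty)| = |A \cap (x,\infty)| \leq |B \cap (x,\infty)|$; if $|B \cap (x,\infty)| \leq q$ these atoms are also all retained in $B'$, while if $|B \cap (x,\infty)| > q$ then $|B' \cap (x,\infty)| = q \geq p > |A' \cap (x,\infty)|$. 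Applying this with $p = \phi_{k+1}$ and $q = \psi_{k+1}$ closes the induction. The main obstacle is essentially the bookkeeping of this selection lemma; once it is in hand, the rest is forced by the decreasing-order pairing, and the adaptedness of $\phi, \psi$ ensures that the sizes $\phi_{k+1}, \psi_{k+1}$ may be consistently used as selection thresholds across the coupling.
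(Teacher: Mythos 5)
Your proposal is correct and follows essentially the same route as the paper: couple the two processes by attaching the same i.i.d.\ reproduction point processes to individuals matched by rank, use that for sorted atoms the relation $\preccurlyeq$ amounts to pointwise inequalities, and check that both the branching step and the truncation to the top $\phi_{k+1} \leq \psi_{k+1}$ individuals preserve the order. The paper records only the reproduction-step observation and leaves the selection step implicit, whereas you spell it out (correctly, via the $\min(\alpha,p) \leq \min(\beta,q)$ case analysis), so the two arguments coincide in substance.
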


\begin{proof}
The lemma is a direct consequence of the following observation. Given $m \leq n$, $x \in \R^m$ and $y \in \R^n$ such that $\sum_{j=1}^m \delta_{x_j} \preccurlyeq \sum_{j=1}^n \delta_{y_j}$ and $(z^j_i, j \leq n, i \in \N)$, we have
\[
  \sum_{j=1}^m \sum_{i=1}^{+\infty} \delta_{x_j + z^j_i} \preccurlyeq \sum_{j=1}^n \sum_{i=1}^{+\infty} \delta_{y_j + z^j_i}.
\]
Consequently, step $k$ of the branching-selection process preserves order $\preccurlyeq$ if $\phi_k \leq \psi_k$.
\end{proof}

This lemma implies that branching random walks with selection and branching random walk with killing can be coupled in an increasing fashion for the order $\preccurlyeq$, as soon as there are at any time $k \leq n$ more individuals in one process than in the other. The main result of the section is the following estimate on the extremal positions in the branching random walk with selection.
\begin{theorem}
\label{thm:Thbehaviour}
Assuming \eqref{eqn:reproduction}, \eqref{eqn:critical} and \eqref{eqn:integrability}, for any continuous positive function $h$ we have
\[
  \lim_{n \to +\infty} \frac{M^h_n}{n^{1/3}} = h_0 - \frac{\pi^2 \sigma^2}{2} \int_0^1 \frac{ds}{h_s^2} \quad \mathrm{and} \quad \lim_{n \to +\infty} \frac{m^h_n}{n^{1/3}} = h_0-h_1 - \frac{\pi^2 \sigma^2}{2} \int_0^1 \frac{ds}{h_s^2} \quad \mathrm{a.s.}
\]
\end{theorem}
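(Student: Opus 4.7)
The plan is to prove Theorem~\ref{thm:Thbehaviour} by coupling the branching random walk with selection $\T^h_{(n)}$ to $q$ independent branching random walks with a killing frontier, and then applying Theorem~\ref{thm:Tfbehaviour}. The pair $(f,g)$ of \eqref{eqn:fandg} is tuned so that, by Lemma~\ref{lem:meannumber} and its intermediate-time variant via Corollary~\ref{cor:mogulskii_modified}, the expected total size of $q$ independent killed branching random walks $\T^{(n),p}_f$ at time $k=\floor{tn}$ is of order $e^{n^{1/3}h_t}=\psi_k$, matching the selection size of $\T^h_{(n)}$. This matching is the cornerstone of the argument.

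\textbf{Step 1 (lower bound on $M_n^h$, upper bound on $m_n^h$).} Fix $\epsilon>0$ and let $f^+=f+\epsilon$, with $g^+$ the corresponding re-solved upper envelope. Root an independent killed branching random walk $\T^{(n),p}_{f^+}$ at each of the $q$ initial individuals of $\T^h_{(n)}$. A first-moment computation via Lemma~\ref{lem:meannumber} gives $\E[\Phi_k]\leq\psi_k e^{-c\epsilon n^{1/3}}$, where $\Phi_k=\sum_p\#(\T^{(n),p}_{f^+}\cap\{|u|=k\})$ and $c=c(h)>0$; a second-moment bound from Lemma~\ref{lem:secondorder}, together with a union bound on a dyadic grid of times, yields $\Phi_k\leq\psi_k$ for all $k\leq n$ with high probability. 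Lemma~\ref{lem:coupling} then produces a coupling under which $\bigsqcup_p\T^{(n),p}_{f^+}$ is a subtree of $\T^h_{(n)}$, so
\[
  M_n^h\geq\max_{p\leq q}\max_{\substack{u\in\T^{(n),p}_{f^+}\\|u|=n}}V(u),\qquad m_n^h\leq\min_{p\leq q}\min_{\substack{u\in\T^{(n),p}_{f^+}\\|u|=n}}V(u).
\]
Theorem~\ref{thm:Tfbehaviour} applied to a single killed tree (after translating $(f^+,g^+)$ by $-h_0$ to bring the hypothesis $f_0<0$ into force) combined with Corollary~\ref{cor:lowboundproba} amplified over the $q$ independent copies shows that the right-hand sides are $g^+_1 n^{1/3}-o(n^{1/3})$ and $f^+_1 n^{1/3}+o(n^{1/3})$ almost surely. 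Letting $\epsilon\to0$ yields $M_n^h\geq g_1 n^{1/3}-o(n^{1/3})$ and $m_n^h\leq f_1 n^{1/3}+o(n^{1/3})$ almost surely.

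\textbf{Step 2 (upper bound on $M_n^h$, lower bound on $m_n^h$).} These follow from the almost-sure containment $\T^h_{(n)}\subseteq\bigcup_p\T^{(n),p}_{f-\epsilon}$ for every $\epsilon>0$ and $n$ large. Once established, Theorem~\ref{thm:Tfbehaviour} for each killed tree together with the $q$-amplification yields $M_n^h\leq g_1 n^{1/3}+o(n^{1/3})$ and $m_n^h\geq f_1 n^{1/3}-o(n^{1/3})$, and letting $\epsilon\to 0$ concludes. The containment reduces to the statement that no individual of $\T^h_{(n)}$ lies below $(f_{k/n}-\epsilon)n^{1/3}$ at any time $k\leq n$, which in turn follows from a first-moment count: via Lemma~\ref{lem:meannumber} and the identity $f_t+H_t(f,g)=h_0-h_t$, the expected number of individuals in the $q$ independent BRWs that stay above $(f-\epsilon/2)_{\cdot/n}n^{1/3}$ up to time $k$ and land near $(f_{k/n}-\epsilon/2)n^{1/3}$ is exponentially larger than $\psi_k$. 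The concentration from Lemma~\ref{lem:secondorder} together with the inclusion from Step~1 promotes this into an almost-sure statement about $\T^h_{(n)}$ itself, so the $\psi_k$ top individuals are forced above $(f-\epsilon)_{k/n}n^{1/3}$. A Borel--Cantelli argument over a dyadic grid of times makes the statement uniform in $k$.

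\textbf{Main obstacle.} The real difficulty is Step~2: transferring the first-moment count carried out over the full cloud of $q$ BRWs into a statement about the top $\psi_k$ within $\T^h_{(n)}$, whose members are selected from the descendants of the previously selected population rather than from the full cloud. This requires a recursive bookkeeping across generations, bootstrapped by the inclusion obtained in Step~1, and the Mogul'ski\u\i{}-type estimates must be sharp enough that the failure probabilities sum over a dyadic time grid so that Borel--Cantelli closes the argument. A secondary technical nuisance is the boundary degeneracy $f_0=0$ in \eqref{eqn:fandg}, handled throughout by the $\epsilon$-perturbations $f\pm\epsilon$ and the final limit $\epsilon\to0$.
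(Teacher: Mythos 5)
Your overall strategy (couple with killed branching random walks, use the $(f,g)$ of \eqref{eqn:fandg}, then apply the results of Section \ref{sec:killing_critical}) is the paper's strategy, but two of your four bounds rest on steps that do not hold as stated. First, in Step 1 the conclusion $m^h_n \leq \min_p \min_{u} V(u)$ over the killed trees does not follow from Lemma \ref{lem:coupling}: the order $\preccurlyeq$ compares order statistics of the same rank, and since the killed union has strictly fewer particles than the quota $\floor{e^{h_{k/n}n^{1/3}}}$, the minimum of the larger (selected) cloud is not bounded by the minimum of the smaller (killed) cloud. Nor can you upgrade the coupling to the literal subtree containment you invoke: a particle whose path stays above the barrier can be crowded out of the top $\psi_k$ by selected-lineage particles that are high at time $k$ but dipped below the barrier earlier, and your bound $\Phi_k\leq\psi_k$ does not control those. (The paper gets the upper bound on $m^h_n$ the other way round, from the coupling from above of Lemma \ref{lem:couplingfromabove} plus a first-moment count, via Lemma \ref{lem:meannumber}, of particles above $(g^\delta_1-h_1)n^{1/3}$.) In addition, your specific perturbation in Step 1 is broken: since $f_0=0$ in \eqref{eqn:fandg} and the $q$ roots sit at the origin, the raised barrier $f+\epsilon$ starts at $\epsilon n^{1/3}>0$, so the trees $\T^{(n),p}_{f+\epsilon}$ are empty after generation $O(1)$ and the coupling from below yields nothing; the paper instead keeps the barrier at $f-\epsilon$ and reduces the number of trees to $\floor{e^{(h_0-2\epsilon)n^{1/3}}}$ (Lemma \ref{lem:couplingfrombelow}).

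Second, Step 2 hinges on the containment $\T^h_{(n)}\subseteq\bigcup_p\T^{(n),p}_{f-\epsilon}$, i.e. the uniform statement $m^h_k\geq(f_{k/n}-\epsilon)n^{1/3}$ for all $k\leq n$, which is essentially the hardest assertion of the theorem; you flag it as the main obstacle but the sketch offered does not prove it. Counting, over the full unselected cloud of $q$ BRWs, particles that stay above $(f-\epsilon/2)$ does not help, because selection only chooses among descendants of previously selected particles; and the bootstrap from Step 1 cannot close the gap, since the coupling from below only guarantees about $e^{(h_{k/n}-c\epsilon)n^{1/3}}$ high particles in the selected cloud, leaving a deficit of order $e^{h_{k/n}n^{1/3}}$ particles that are completely unconstrained from below, so the minimum is not controlled. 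The paper avoids proving this containment altogether: for the upper bound on $M^h_n$ it establishes cloud domination from above (Lemma \ref{lem:couplingfromabove}), whose real content is a uniform \emph{lower} bound on the population of $\floor{e^{(h_0+2\epsilon)n^{1/3}}}$ killed trees, proved with Corollary \ref{cor:sizepop}, Theorem \ref{thm:fzfhs} and a binomial/large-deviation interpolation between grid times (your proposal never addresses this population lower bound); and for the lower bound on $m^h_n$ it runs an auxiliary branching-selection process over the last $\floor{\zeta n^{1/3}}$ generations that tops the population up to exactly $\floor{e^{h_1 n^{1/3}}}$ particles while losing only $O(\zeta n^{1/3})$ in position, so that the $\preccurlyeq$-comparison of equal-size clouds finally does bound $m^h_n$ from below. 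Some such mechanism for filling the quota is indispensable, and it is the main missing idea in your proposal.
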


\begin{remark}
It is worth noting that choosing $h$ as a constant, Theorem \ref{thm:Thbehaviour} provides information on the Brunet-Derrida's $N$-BRW, on the time scale $\frac{(\log N)^3}{h^3}$. Letting $h \to 0$, we study the asymptotic behaviour of the $N$-BRW on a typical time scale.
\end{remark}

The proof of Theorem \ref{thm:Thbehaviour} is based on the construction of an increasing coupling existing between $(\T^h_{(n)},V)$ and approximatively $e^{h_0 n^{1/3}}$ independent branching random walks with a killing boundary $n^{1/3}f_{./n}$. Using Lemma \ref{lem:coupling}, it is enough to bound the size of the population at any time in the branching random walks with a killing boundary to prove the coupling. In a first time, we bound from below the branching random walk with selection by $e^{(h_0-2\epsilon)n^{1/3}}$ independent branching random walks with a killing boundary.
\begin{lemma}
\label{lem:couplingfrombelow}
We assume that \eqref{eqn:reproduction} and \eqref{eqn:critical} hold. For any positive continuous function $h$ and $\epsilon>0$, there exists a coupling between $(\T^h_{(n)},V)$ and i.i.d. branching random walks $((\T^j,V^j), j \geq 1)$ such that almost surely for any $n \geq 1$ large enough, we have
\begin{equation}
  \label{eqn:couplingfrombelow}
  \forall k \leq n, \sum_{\substack{u \in \T_{(n)}^h\\ |u| = k}} \delta_{V(u)} \succcurlyeq \sum_{j=1}^{e^{(h_0-2\epsilon)n^{1/3}}} \sum_{\substack{u \in \T^j\\ |u|=k}} \ind{V^j(u_i) \geq (f_{i/n}-\epsilon)n^{1/3}, i \leq k} \delta_{V^j(u)}.
\end{equation}
\end{lemma}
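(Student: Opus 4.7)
The plan is to realise both processes on a common probability space using a single i.i.d. sequence $((\T^p,V^p), p \ge 1)$ of branching random walks with law $\calL$, in such a way that the selection process $(\T^h_{(n)},V)$ is built from the first $q_n := \floor{e^{h_0 n^{1/3}}}$ of them while the killed branching random walks $(\T^j,V^j)$ appearing in \eqref{eqn:couplingfrombelow} are the first $N_n := \floor{e^{(h_0-2\epsilon)n^{1/3}}} \le q_n$. Under this coupling, the stochastic domination at generation $k$ reduces to a scalar inequality: writing
\[
  S_k := \sum_{j=1}^{N_n} \#\left\{u \in \T^j : |u|=k,\ V^j(u_i) \ge (f_{i/n}-\epsilon)n^{1/3},\ i \le k\right\},
\]
if $S_k \le \floor{e^{h_{k/n} n^{1/3}}}$ then for every $x \in \R$ the number of killed-BRW survivors above $x$ is bounded both by $S_k$ and by the number of time-$k$ descendants of the first $N_n \le q_n$ initial particles above $x$, hence by $\min(\floor{e^{h_{k/n} n^{1/3}}},\text{descendants above }x)$, which is precisely the number of selected individuals above $x$.

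The core of the proof is then to show $S_k \le \floor{e^{h_{k/n} n^{1/3}}}$ for all $k \le n$ simultaneously, with summable-in-$n$ failure probability. I take $f,g$ as in \eqref{eqn:fandg}, so that $g_t-f_t=h_t>0$, $g_t+H_t(f,g)=h_0$ and $f_t+H_t(f,g)=h_0-h_t$. Writing $\eta_t(\epsilon) := H_t(f,g) - H_t(f-\epsilon,g+\epsilon)$, which tends to $0$ uniformly in $t$ as $\epsilon \to 0$, Lemma \ref{lem:frontier} applied to the strip $[f-\epsilon,g+\epsilon]$ gives
\[
  \limsup_{n \to +\infty} n^{-1/3} \log \E\left[Y^{(n)}_{f-\epsilon,g+\epsilon}\right] \le -h_0 - \epsilon + \sup_t \eta_t(\epsilon).
\]
Multiplying by $N_n$ and applying Markov, the probability that some individual in $\bigcup_{j \le N_n} \T^j$ ever exceeds $(g+\epsilon)_{\cdot/n} n^{1/3}$ is at most $\exp\!\bigl(n^{1/3}(-3\epsilon + \sup_t \eta_t(\epsilon) + o(1))\bigr)$, which decays exponentially provided $\epsilon > 0$ is small enough.

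On the complement of that event, every killed-BRW survivor also stays below $(g+\epsilon)_{\cdot/n} n^{1/3}$, so $S_k$ is bounded by the count of particles in the full strip $[(f-\epsilon)_{\cdot/n}n^{1/3},(g+\epsilon)_{\cdot/n}n^{1/3}]$. The many-to-one lemma together with Corollary \ref{cor:mogulskii}, using $-f_t - H_t(f,g) = h_t - h_0$, yields
\[
  \E\left[S_k\right] \le N_n \exp\!\bigl(n^{1/3}(h_{k/n} - h_0 + \epsilon + \eta_{k/n}(\epsilon) + o(1))\bigr) = \exp\!\bigl(n^{1/3}(h_{k/n} - \epsilon + \eta_{k/n}(\epsilon) + o(1))\bigr).
\]
For $\epsilon$ small enough so that $\sup_t \eta_t(\epsilon) < \epsilon/2$, Markov's inequality and a union bound over $k \in \{1,\dots,n\}$ give $\P(\exists k \le n,\ S_k > \floor{e^{h_{k/n} n^{1/3}}}) \le n e^{-c n^{1/3}}$ for some $c>0$, and Borel--Cantelli provides the almost-sure statement.

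The main technical obstacle is the interplay of the two $\epsilon$-slacks: perturbing the killing boundary by $\epsilon$ and shrinking the initial population by $e^{-2\epsilon n^{1/3}}$ both introduce $O(\epsilon)$ corrections in the exponential rates through $\eta_t(\epsilon)$. The factor $2$ in the exponent $h_0-2\epsilon$ is precisely what absorbs these corrections and leaves a strictly negative net exponent, so that the Markov bounds close for $\epsilon$ small enough; the case of arbitrary $\epsilon>0$ then follows from its small-$\epsilon$ version.
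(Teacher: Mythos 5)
The second half of your argument --- bounding the total population of the killed branching random walks by $e^{h_{k/n}n^{1/3}}$ uniformly in $k\le n$ via Lemma \ref{lem:frontier}, the many-to-one lemma, Mogul'ski\u\i{} estimates, Markov's inequality and Borel--Cantelli --- is essentially the argument of the paper. The genuine gap is in the reduction that precedes it. You deduce the measure domination from the scalar bound $S_k\le\floor{e^{h_{k/n}n^{1/3}}}$ by asserting that the number of selected individuals above $x$ at generation $k$ equals $\min\bigl(\floor{e^{h_{k/n}n^{1/3}}},\ \#\{\text{generation-}k\text{ descendants of the } q_n \text{ roots above } x\}\bigr)$. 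This identity is false for $k\ge 2$: the selection is recursive, so at step $k$ only children of \emph{previously selected} individuals are eligible, and lineages culled at an earlier step may have produced precisely the high descendants you are counting. Worse, under your coupling (both processes read off the same forest) a killed-BRW survivor at generation $j$ need not belong to the selected population at all: it can be out-ranked by individuals from the extra $q_n-N_n$ trees, be removed from $\T^h_{(n)}$, and yet its descendants keep populating the killed BRW at later generations --- so the claimed pathwise domination does not follow from $S_k\le\floor{e^{h_{k/n}n^{1/3}}}$ with this coupling. This is exactly the point the paper handles with Lemma \ref{lem:coupling}: the union of killed BRWs is itself a branching-selection process with \emph{adapted, random} selection sizes (keep the top $\phi_k$ where $\phi_k$ is the number of children above the barrier), and the two branching-selection processes are coupled by attaching the i.i.d. offspring point processes to \emph{ranks} rather than to a fixed forest; order preservation then holds generation by generation as soon as $\phi_k\le\floor{e^{h_{k/n}n^{1/3}}}$ for all $k\le n$, which is the event whose probability you (correctly) control. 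You need this rank-matched coupling, or some substitute for it, to make the reduction valid.

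A secondary issue: working in the strip $[f-\epsilon,g+\epsilon]$ gives exponents $-3\epsilon+\sup_t\eta_t(\epsilon)$ and $-\epsilon+\sup_t\eta_t(\epsilon)$, which are negative only for $\epsilon$ small (depending on $h$; e.g.\ for $h$ a small constant and $\epsilon$ of the same order, $\sup_t\eta_t(\epsilon)$ can exceed $3\epsilon$). Your closing claim that the case of arbitrary $\epsilon>0$ follows from the small-$\epsilon$ case is not justified, because the right-hand side of \eqref{eqn:couplingfrombelow} is not monotone in $\epsilon$: increasing $\epsilon$ reduces the number of trees but lowers the killing barrier, so neither side dominates the other. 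The paper avoids the problem by using the parallel-shifted strip $[f-\epsilon,g-\epsilon]$, whose width stays equal to $h$, so that both exponents come out to exactly $-\epsilon$ for every $\epsilon>0$; replacing your curve $g+\epsilon$ by $g-\epsilon$ fixes this part of your argument.
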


\begin{proof}
Let $n \in \N$ and $\epsilon>0$, we denote by $p = \floor{e^{(h_0-2\epsilon) n^{1/3}}}$ and by $\tilde{\T}^{(n)}_{f-\epsilon}$ the disjoint union of ${\T^j}^{(n)}_{f-\epsilon}$ for $j \leq p$. For $u \in \tilde{\T}^{(n)}_{f-\epsilon}$, we write $V(u) = V^j(u)$ if $u \in \T^j$. By Lemma \ref{lem:coupling}, it is enough to prove that almost surely, for any $n \geq 1$ large enough we have
\[
  \forall k \leq n, \log \# \left\{ u \in \tilde{\T}^{(n)}_{f-\epsilon} : |u|=k \right\} \leq n^{1/3} h_{k/n}.
\]

We first prove that with high probability, no individual in $\tilde{\T}^{(n)}_{f-\epsilon}$ crosses the frontier $(g_{k/n}-\epsilon)n^{1/3}$ at some time $k \leq n$. By Lemma \ref{lem:frontier}, we have
\begin{multline*}
  \limsup_{n \to +\infty} n^{-1/3} \log \P\left( \exists u \in \tilde{\T}^{(n)}_{f-\epsilon} : V(u) \geq (g_{|u|/n} - \epsilon) n^{1/3} \right)\\
  \leq \limsup_{n \to +\infty} n^{-1/3} \log \left(p \P\left( \exists u \in \T^{(n)}_{f-\epsilon} : V(u) \geq (g_{|u|/n} - \epsilon)n^{1/3}\right) \right) \\
  \leq h_0-2\epsilon - \inf_{t \in [0,1]} g_t-\epsilon + \frac{\pi^2 \sigma^2}{2} \int_0^t \frac{ds}{(g_s - f_s)^2} = - \epsilon.
\end{multline*}
Using the Borel-Cantelli lemma, almost surely for any $n \geq 1$ large enough and $u \in \tilde{\T}^{(n)}_{f-\epsilon}$, we have $V(u) \leq (g_{|u|/n} - \epsilon)n^{1/3}$.

By this result, almost surely, for $n \geq 1$ large enough and for $k \leq n$, the size of the $k^\mathrm{th}$ generation in $\tilde{\T}^{(n)}_{f-\epsilon}$ is given by
\[
  Z^{(n)}_k = \sum_{u \in \tilde{T}^{(n)}_{f-\epsilon}} \ind{|u|=k} \ind{V(u_j) \leq (g_{j/n}-\epsilon)n^{1/3}, j \leq k}.
\]
Using the Markov inequality, we have
\[
  \P\left( \exists k \leq n: Z^{(n)}_k \geq e^{n^{1/3}h_{k/n}} \right) \leq \sum_{k=1}^n e^{-n^{1/3} h_{k/n}} \E\left[ Z^{(n)}_k \right].
\]
We now provide an uniform upper bound for $\E(Z^{(n)}_k)$. Applying Lemma \ref{lem:manytoone}, for any $1 \leq k \leq n$ we have
\begin{align*}
  \E\left[ Z^{(n)}_k \right]
  &\leq p \E\left[ e^{-S_k} \ind{S_j \in \left[ (f_{j/n}-\epsilon) n^{1/3}, (g_{j/n}-\epsilon)n^{1/3} \right]} \right]\\
  &\leq p e^{-(f_{k/n}-\epsilon)n^{1/3}} \P\left( S_j \in \left[ (f_{j/n}-\epsilon) n^{1/3}, (g_{j/n}-\epsilon)n^{1/3} \right] , j \leq k\right).
\end{align*}
Let $A \in \N$. For any $a \leq A$ we write $m_a = \floor{na/A}$ and $\underline{f}_{a,A} = \inf_{s \in [a/A,(a+1)/A]} f_s$. For any $k \in (m_a,m_{a+1}]$, applying the Markov property at time $m_a$ and Theorem \ref{thm:mogulskii} we have
\[
  \E\left[ Z^{(n)}_k \right] \leq \exp\left[(h_0-2\epsilon)n^{1/3}-n^{1/3}\left(\underline{f}_{a,A} - \epsilon + \frac{\pi^2 \sigma^2}{2} \int_0^{a/A} \frac{ds}{h_s^2} \right)\right] 
\]
As $h_0 = f_t + h_t + \frac{\pi^2 \sigma^2}{2} \int_0^t \frac{ds}{h_s^2}$, letting $A \to +\infty$ we have
\[
  \limsup_{n \to +\infty} n^{-1/3} \log \P\left( \exists k \leq n : Z^{(n)}_k \geq e^{n^{1/3}h_{k/n}} \right) \leq - \epsilon.
\]
Consequently, applying the Borel-Cantelli lemma again, for any $n \geq 1$ large enough we have
\[
  \forall k \leq n, \log \# \left\{ u \in \tilde{\T}^{(n)}_{f-\epsilon} : |u|=k \right\} \leq n^{1/3} h_{k/n}
\]
which concludes the proof, by Lemma \ref{lem:coupling}.
\end{proof}

Similarly, we prove that the branching random walk with selection is bounded from above by $\floor{e^{(h_0+2\epsilon) n^{1/3}}}$ independent branching random walks with a killing boundary.
\begin{lemma}
\label{lem:couplingfromabove}
We assume \eqref{eqn:reproduction}, \eqref{eqn:critical} and \eqref{eqn:integrability} hold. For any continuous positive function $h$ and $\epsilon>0$, there exists a coupling between $(\T^h_{(n)},V)$ and i.i.d. branching random walks $((\T^j,V^j), j \geq 1)$ such that almost surely for any $n \geq 1$ large enough we have
\begin{equation}
  \label{eqn:couplingfromabove}
  \forall k \leq n, \sum_{\substack{u \in \T_{(n)}^h\\ |u| = k}} \delta_{V(u)} \preccurlyeq \sum_{j=1}^{e^{(h_0+2\epsilon) n^{1/3}}} \sum_{\substack{u \in \T^j\\ |u|=k}} \ind{V^j(u_i) \geq (f_{i/n}-\epsilon)n^{1/3}, i \leq k} \delta_{V^j(u)}.
\end{equation}
\end{lemma}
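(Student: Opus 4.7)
The plan is to mirror the proof of Lemma \ref{lem:couplingfrombelow}, now establishing a stochastic \emph{lower} bound on the killed system. Set $q = \lfloor e^{(h_0 + 2\epsilon) n^{1/3}} \rfloor$ and let $\tilde{\T}^{(n)}_{f-\epsilon}$ denote the disjoint union of $q$ independent copies of $\T^{(n)}_{f-\epsilon}$. Since $q \geq \lfloor e^{h_0 n^{1/3}} \rfloor$ the initial configurations are compatible, and since for a fixed-threshold barrier the particles surviving at each step are exactly the top-ranked ones, the killed system can be viewed as a branching-selection process whose cap at generation $k$ is its own (random) population size. Lemma \ref{lem:coupling} therefore reduces \eqref{eqn:couplingfromabove} to showing that, almost surely for $n$ large enough,
\[
  \forall k \leq n, \quad \#\{ u \in \tilde{\T}^{(n)}_{f-\epsilon} : |u|=k \} \geq \lfloor e^{h_{k/n} n^{1/3}} \rfloor.
\]

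For each $k \leq n$, this lower bound will be obtained with probability $1 - O(e^{-c n^{1/3}})$ via a second-moment argument in the spirit of Corollary \ref{cor:sizepop}. With $g$ as in \eqref{eqn:fandg} and $\delta \in (0, 2\epsilon)$ to be chosen, let $X^{(k)}_j$ count the individuals $u$ at generation $k$ in the $j$-th copy satisfying $V^j(u_i) \in [f_{i/n} n^{1/3}, g_{i/n} n^{1/3}]$ and $\xi(u_i) \leq \delta n^{1/3}$ for every $i \leq k$, and ending close to the lower boundary $f_{k/n} n^{1/3}$. Versions of Lemmas \ref{lem:meannumber} and \ref{lem:secondorder} for counts at intermediate time $k \leq n$, obtained by replacing $H_1(f,g)$ with $H_{k/n}(f,g)$ and the supremum over $[0,1]$ by that over $[0, k/n]$ (using the same spinal-decomposition and Mogul'ski{} chopping arguments already present in Section \ref{sec:lemmas}), then give
\[
  \E[X^{(k)}_j] \geq e^{(h_{k/n} - h_0 - o(1)) n^{1/3}}, \qquad \E[(X^{(k)}_j)^2] \leq e^{(2 h_{k/n} - h_0 + \delta) n^{1/3}},
\]
with the cancellations coming from the two identities $f_t + H_t(f,g) = h_0 - h_t$ and $g_t + H_t(f,g) = h_0$ forced by \eqref{eqn:fandg}. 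The second identity is the crucial one: it makes $\sup_{t \leq k/n}(g_t + H_t(f,g)) = h_0$ for every $k$, so the second-moment bound does not deteriorate as $k$ varies.

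Applying the Chebyshev-type inequality \eqref{eqn:concentration} to the sum of the $q$ independent copies yields
\[
  \P\!\left[\sum_{j=1}^q X^{(k)}_j \leq e^{(h_{k/n} + \epsilon) n^{1/3}}\right] \leq C\, \frac{\E[(X^{(k)}_j)^2]}{q\, \E[X^{(k)}_j]^2} \leq C\, e^{-(2\epsilon - \delta - o(1)) n^{1/3}},
\]
and choosing $\delta = \epsilon/2$ bounds the right-hand side by $C e^{-\epsilon n^{1/3}}$ uniformly in $k \leq n$. A union bound over $k \leq n$ gives total failure probability $\leq C n e^{-\epsilon n^{1/3}}$, which is summable in $n$, so the Borel-Cantelli lemma closes the argument and Lemma \ref{lem:coupling} delivers \eqref{eqn:couplingfromabove}. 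The main technical point is proving the intermediate-time extensions of Lemmas \ref{lem:meannumber} and \ref{lem:secondorder}; but since every argument in Section \ref{sec:lemmas} already uses the single scaling $a_n = n^{1/3}$ regardless of the length of the time interval, these extensions follow line by line from the original proofs.
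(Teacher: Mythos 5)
Your reduction of \eqref{eqn:couplingfromabove} via Lemma \ref{lem:coupling} to the almost sure lower bound $\#\{u \in \tilde{\T}^{(n)}_{f-\epsilon} : |u|=k\} \geq e^{h_{k/n} n^{1/3}}$ for all $k \leq n$ is exactly the paper's reduction, and your first/second moment bookkeeping at a fixed macroscopic time $t = k/n$ (using $f_t + H_t(f,g) = h_0 - h_t$ and $g_t + H_t(f,g) = h_0$) is correct. The gap is the claim that ``intermediate-time extensions'' of Lemmas \ref{lem:meannumber} and \ref{lem:secondorder} hold with error bounds uniform in $k \leq n$, so that a union bound over $k$ closes the argument. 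Two problems. First, for $k \lesssim n^{2/3}$ the Mogul'ski\u\i{} machinery of Section \ref{sec:lemmas} does not apply with $a_n = n^{1/3}$: every proof there needs the number of steps to be large compared with $a_n^2$ (the hypothesis $a_n^2/n \to 0$, the chopping into $K \sim n/(A a_n^2) \to +\infty$ blocks before invoking Donsker), and this fails when the horizon is $k \leq C n^{2/3}$, so the extensions do not ``follow line by line.'' Second, even on the range $k \geq \eta n$, Lemmas \ref{lem:meannumber}, \ref{lem:secondorder} and Corollary \ref{cor:sizepop} are asymptotic statements for a fixed profile, i.e. pointwise in $t = k/n$: they give, for each $t$, a bound valid for $n \geq N(t)$, not a quantitative bound holding simultaneously for all $k \leq n$ at a given $n$, which is what your union bound over $k$ requires. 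Since the population of the killed system is not monotone in $k$, continuity of $h$ and a finite grid alone do not repair this; one must genuinely control every intermediate generation.

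This is precisely where the paper puts its effort: it applies Corollary \ref{cor:sizepop} only at the finitely many times $m_a = \floor{na/A}$, $a \leq A$, obtains the grid bound by Borel--Cantelli, and then interpolates between consecutive grid times with an extra probabilistic ingredient. Namely, by Theorem \ref{thm:fzfhs} (consistent maximal displacement) there exist $r>0$ and $\lambda>0$ such that each individual alive at time $m_a$ has probability at least $r$ of having a descendant staying within $\lambda A^{-1/3} n^{1/3}$ of its position throughout $[m_a, m_{a+1}]$, hence above the killing barrier once $A$ is chosen large; conditionally on $\calF_{m_a}$ the population at every $k \in [m_a, m_{a+1}]$ then stochastically dominates a binomial random variable with parameters $Z^{(n)}_{m_a}$ and $r$, and an elementary large deviation estimate plus Borel--Cantelli yields the uniform-in-$k$ lower bound. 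Your proposal is missing an argument of this type (or, alternatively, a genuinely uniform-in-$k$, non-asymptotic version of the Mogul'ski\u\i{} estimates, which would have to be proved, not cited); as written, the decisive uniformity step is asserted rather than established.
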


\begin{proof}
Let $n \in \N$ and $\epsilon>0$, we denote by $p = \floor{e^{(h_0+2\epsilon) n^{1/3}}}$ and by $\tilde{\T}^{(n)}_{f-\epsilon}$ the disjoint union of ${\T^j}^{(n)}_{f-\epsilon}$ for $j \leq p$. For $u \in \tilde{\T}^{(n)}_{f-\epsilon}$, we write $V(u) = V^j(u)$ if $u \in \T^j$. Similarly to the previous lemma, the key tool is a bound from below of the size of the population at any time in $\tilde{\T}^{(n)}_{f-\epsilon}$. For any $1 \leq k \leq n$, we set
\begin{multline*}
  Z^{(n)}_k = \sum_{u \in \tilde{\T}^{(n)}_{f-\epsilon}} \ind{|u|=k} \ind{V(u_j) \leq (g_{j/n} - \epsilon)n^{1/3}, j \leq k} \quad \mathrm{and}\\
  \tilde{Z}^{(n)}_k = \sum_{u \in \tilde{\T}^{(n)}_{f-\epsilon}} \ind{|u|=k} \ind{V(u) \geq f_1 n^{1/3}} \ind{V(u_j) \leq (g_{j/n} - \epsilon)n^{1/3}, j \leq k}.
\end{multline*}

For any $t \in (0,1)$, applying Corollary \ref{cor:sizepop}, we have
\[
  \limsup_{n \to +\infty} n^{-1/3} \log \P\left[ \tilde{Z}^{(n)}_\floor{nt} \leq e^{(h_t+\epsilon) n^{1/3}} \right] \leq -3\epsilon.
\]
Let $A \in \N$, for $a \leq A$ we set $m_a = \floor{na/A}$. By the Borel-Cantelli lemma, almost surely, for any $n \geq 1$ large enough we have
\[
  \forall a \leq A, \log \tilde{Z}^{(n)}_{m_a} \geq n^{1/3} (h_\frac{a}{A} + \epsilon).
\]

We extend this result into an uniform one. To do so, we notice that Theorem \ref{thm:fzfhs} implies there exists $r>0$ small enough and $\lambda > 0$ large enough such that
\[
  \inf_{n \in \N} \P\left[ \exists |u| = n : \forall k \leq n, V(u_k) \geq - \lambda n^{1/3} \right] > r.
\]
Consequently, every individual alive at time $m_a$ above $f_{a/A}n^{1/3}$ start an independent branching random walk, which has probability at least $r$ to have a descendant at time $m_{a+1}$ which stayed at any time in $k \in [m_a,m_{a+1}]$ above $(f_{a/A}-\lambda A^{-1/3} n^{1/3}$. Choosing $A>0$ large enough, conditionally on $\calF_{m_a}$, $\inf_{k \in [m_a,m_{a+1}]} Z^{(n)}_k$ is stochastically bounded from below by a binomial variable with parameters $Z_{m_a}^{(n)}$ and $r$. We conclude from an easy large deviation estimate and the Borel-Cantelli lemma again, that almost surely for $n \geq 1$ large enough we have
\[
  \forall k \leq n, \log Z^{(n)}_k \geq n^{1/3} h_{k/n}.
\]
Applying Lemma \ref{lem:coupling}, we conclude that
\[
  \forall k \leq n, \sum_{\substack{u \in \T_{(n)}^h \\ |u| = k}} \delta_{V(u)} \preccurlyeq \sum_{\substack{u \in \tilde{\T}^{(n)}_{f-\epsilon}\\ |u|=k}} \delta_{V(u)}.
\]
\end{proof}

Using Lemmas \ref{lem:couplingfrombelow} and \ref{lem:couplingfromabove}, we easily bound the maximal and the minimal displacement in the branching random walk with selection.
\begin{proof}[Proof of Theorem \ref{thm:Thbehaviour}]
The proof is based on the observation that for any $x_1 \geq x_2 \geq \cdots \geq x_p$ and $y_1 \geq y_2 \geq \cdots \geq y_q$, if $\sum_{j=1}^p \delta_{x_j} \preccurlyeq \sum_{j=1}^q \delta_{y_j}$ then $p \leq q$, $x_1 \leq y_1$ and $x_p \leq y_p$.

Let $n \in \N$ and $\epsilon>0$, we denote by $\check{p} = \floor{e^{(h_0-2\epsilon) n^{1/3}}}$ and by $\hat{p} = \floor{e^{(h_0+2\epsilon) n^{1/3}}}$. Given $((\T^j,V^j), j \in \N)$ i.i.d. branching random walks, we set $\check{\T}^{(n)}_{f-\epsilon}$ (respectively $\hat{\T}^{(n)}_{f-\epsilon}$) the disjoint union of ${\T^j}^{(n)}_{f-\epsilon}$ for $j \leq \check{p}$ (resp. $j \leq \hat{p}$). For $u \in \hat{\T}^{(n)}_{f-\epsilon}$, we write $V(u) = V^j(u)$ if $u \in \T^j$. By Lemmas \ref{lem:couplingfrombelow} and \ref{lem:couplingfromabove}, we have
\[
  \max_{u \in \check{\T}^{(n)}_{f-\epsilon}, |u|=n} V(u) \leq M^h_n \leq \max_{u \in \hat{\T}^{(n)}_{f-\epsilon}, |u|=n} V(u).
\]

For any $\delta > -h_0$, we denote by $g^\delta$ the solution of the differential equation
\[
  g^\delta_t + \frac{\pi^2 \sigma^2}{2} \int_0^t \frac{ds}{(g^\delta_s - f_s)^2} = h_0+\delta.
\]
We observe that $g^\delta$ is well-defined on $[0,1]$ for $\delta$ in a neighbourhood of $0$. We notice that $g^0 = g$ and that $\delta \mapsto g^\delta$ is continuous with respect to the uniform norm. Moreover
\begin{align*}
  \P\left( \max_{u \in \hat{\T}^{(n)}_{f-\epsilon}, |u|=n} V(u) \geq g^\delta_1 n^{1/3} \right)
  &\leq \P\left( \exists u \in \hat{\T}^{(n)}_{f-\epsilon} : V(u) \geq g^\delta_{|u|/n} n^{1/3} \right)\\
  &\leq \hat{p} \P\left( \exists |u| \leq n : V(u) \geq g^\delta_{|u|/n} n^{1/3} \right).
\end{align*}
Consequently, using Lemma \ref{lem:frontier}, we have
\[
  \limsup_{n \to +\infty} n^{-1/3} \log \P\left( \max_{u \in \hat{\T}^{(n)}_{f-\epsilon}, |u|=n} V(u) \geq g^\delta_1 n^{1/3} \right)
  \leq h_0 + 2 \epsilon - \inf_{t \in [0,1]} g^\delta_t + \frac{\pi^2 \sigma^2}{2} \int_0^t \frac{ds}{(g^\delta_s-f_s + \epsilon)^2}.
\]
For any $\delta > 0$, for any $\epsilon>0$ small enough we have
\[
  \limsup_{n \to +\infty} n^{-1/3} \log \P\left( M^h_n \geq g^\delta_1 n^{1/3} \right) < 0.
\]
By the Borel-Cantelli lemma, we have $\limsup_{n \to +\infty} \frac{M^h_n}{n^{1/3}} \leq g^\delta_1$ a.s. Letting $\delta \to 0$ concludes the proof of the upper bound of the maximal displacement.

To obtain a lower bound, we notice that
\begin{align*}
  \P\left( M^h_n \leq (g^\delta_1-2\epsilon) n^{1/3} \right)&\leq \P\left( \max_{u \in \check{\T}^{(n)}_{f-\epsilon}, |u|=n} V(u) \leq (g^\delta_1-2\epsilon) n^{1/3} \right)\\
  &\leq \P\left( \max_{|u|=n} V(u) \leq (g^\delta_1-2\epsilon) n^{1/3} \right)^{\check{p}}.
\end{align*}
We only consider individuals that stayed at any time $k \leq n$ between the curves $n^{1/3}(f_{k/n}-\epsilon)$ and $n^{1/3}(g^{-\delta}_{k/n}-\epsilon)$, applying Corollary \ref{cor:lowboundproba}, for any $\delta > 0$ small enough, for any $\epsilon>0$ small enough, we have
\begin{multline*}
  \liminf_{n \to +\infty} n^{-1/3} \log \P\left( \exists |u| = n : V(u) \geq (g^{-\delta}_1-2\epsilon) n^{1/3} \right)\\
  \geq - \sup_{t \in [0,1]} g^{-\delta}_t - \epsilon + \frac{\pi^2 \sigma^2}{2} \int_0^t \frac{ds}{(g^\delta_s - f_s)^2} \geq \epsilon - h_0 + \delta.
\end{multline*}
As a consequence,
\[
  \liminf_{n \to +\infty} n^{-1/3} \log \left(- \log \P\left( M^h_n \leq (g^\delta_1-2\epsilon) n^{1/3} \right) \right) \geq \delta - \epsilon.
\]
For any $\delta > 0$ small enough, for any $\epsilon>0$ small enough, applying the Borel-Cantelli lemma we have
\[
  \liminf_{n \to +\infty} \frac{M^h_n}{n^{1/3}} \geq g^\delta_1 - 2 \epsilon \quad \mathrm{a.s.}
\]
Letting $\epsilon \to 0$ then $\delta \to 0$ concludes the almost sure asymptotic behaviour $M^h_n$.

We now bound $m^h_n$. By Lemma \ref{lem:couplingfromabove}, almost surely for $n \geq 1$ large enough, the $\floor{e^{n^{1/3} h_1}}^\mathrm{th}$ rightmost individual at generation $n$ in $\hat{\T}^{(n)}_{f-\epsilon}$ is above $m^h_n$. Therefore for any $x \in \R$, almost surely for $n \geq 1$ large enough,
\[
  \ind{m^h_n \geq x n^{1/3}} \leq \ind{\# \left\{ u \in \hat{\T}^{(n)}_{f-\epsilon} : |u| =n, V(u) \geq xn^{1/3} \right\} \geq e^{h_1 n^{1/3}}}. 
\]
Let $\delta > 0$. By Lemma \ref{lem:frontier}, we have
\[
  \limsup_{n \to +\infty} n^{-1/3} \log \P\left( \exists u \in \hat{\T}^{(n)}_{f-\epsilon} : V(u) \geq (g^\delta_{k/n}-\epsilon)n^{1/3} \right) \leq h_0 - (h_0 + \delta - \epsilon).
\]
Consequently, for any $\delta > 0$, for any $\epsilon>0$ small enough, almost surely for $n \geq 1$ large enough the population in $\hat{\T}^{(n)}_{f-\epsilon}$ at time $k$ belongs to $I^{(n)}_k$. We write
\[
  Z^{(n)}(x) = \sum_{u \in \hat{\T}^{(n)}_{f-\epsilon}} \ind{|u|=n} \ind{V(u) \geq xn^{1/3}} \ind{V(u_j) \leq (g^\delta_{j/n}-\epsilon)n^{1/3}, j \leq n}.
\]
By Lemma \ref{lem:meannumber}, we have
\begin{align*}
  \limsup_{n \to +\infty} n^{-1/3} \log \E\left[Z^{(n)}(x)\right] &\leq h_0 - \left( x + \frac{\pi^2 \sigma^2}{2} \int_0^t \frac{ds}{(g^\delta_s - f_s)^2} \right)\\
  &\leq g^\delta_1 - \delta -x.
\end{align*}
Using the Markov inequality, for any $\delta > 0$, for any $n \geq 1$ large enough we have $Z^{(n)}(g^\delta_1 - h_1) \leq e^{h_1 n^{1/3}}$, which leads to
\[
  \limsup_{n \to +\infty} \frac{m^h_n}{n^{1/3}} \leq g^\delta_1 - h_1 \quad \mathrm{a.s.}
\]
Letting $\delta \to 0$ concludes the proof of the upper bound of $m^h_n$.

The lower bound is obtained in a similar fashion. For any $\zeta > 0$, we write $k = \floor{\zeta n^{1/3}}$. Almost surely, for $n \geq 1$ large enough we have
\[
  \sum_{\substack{u \in \check{\T}^{(n)}_{f-\epsilon}\\ |u| = n-k}} \delta_{V(u)} \preccurlyeq \sum_{\substack{u \in \T^h_{(n)} \\ |u| = n-k}} \delta_{V(u)}.
\]
This inequality is not enough to obtain a lower bound on $m^h_n$, as there are less than $e^{h_1 n^{1/3}}$ individuals alive in $\check{\T}^{(n)}_{f-\epsilon}$ at generation $n-k$. Therefore, starting from generation $n-k$, we start a modified branching-selection procedure that preserve the order $\preccurlyeq$ and guarantees there are $\floor{e^{h_1 n^{1/3}}}$ individuals alive at generation $n$.

In a first time, we bound from below the size of the population alive at generation $n-k$. We write, for $\delta > 0$ and $\eta > 0$
\[
  X^{(n)} = \sum_{u \in \check{\T}^{(n)}_{f-\epsilon}} \ind{|u| = n-k} \ind{V(u_j) \leq (g^{-\delta}_{j/n}-\epsilon)n^{1/3}, \xi(u_j) \leq e^{\eta n^{1/3}}, j \leq n-k}.
\]
By Lemma \ref{lem:secondorder}, we have
\[
  \liminf_{n \to +\infty} n^{-1/3} \log \E(X^{(n)}) \geq h_0 - 2\epsilon -\left( (f_1-\epsilon) + \frac{\pi^2 \sigma^2}{2} \int_0^1 \frac{ds}{(g^{-\delta}_s - f_s)^2}\right) = \delta - \epsilon + (g^{-\delta}_1 - f_1).
\]
Consequently, using the fact that for $\check{p}$ i.i.d. random variables $(X_j)$, we have
\[
  \P\left( \sum_{j=1}^{\check{p}} X_j \leq \check{p} \E(X_1)/2 \right) \leq \frac{4\E(X^2_1)}{\check{p} \E(X_1)^2},
\]
for any $\epsilon>0$ and $\delta > 0$ small enough enough, Lemma \ref{lem:secondorder} leads to
\[
  \limsup_{n \to +\infty} n^{-1/3} \log \P\left( X^{(n)} \leq e^{((g^{-\delta}_1-f_1) + \delta)n^{1/3}} \right) \leq \eta + h_0 - \delta - \epsilon-(h_0-2\epsilon).
\]
For any $\xi>0$, choosing $\delta > 0$ small enough, and $\epsilon>0$ and $\eta > 0$ small enough, we conclude by the Borel-Cantelli lemma that almost surely, for $n \geq 1$ large enough
\[
  \#\left\{ u \in \check{\T}^{(n)}_{f-\epsilon} : |u| = n- k \right\} \geq \exp\left(n^{1/3} (h_1 - \xi)\right).
\]

In a second time, we observe by \eqref{eqn:reproduction} there exists $a>0$ and $\rho>1$ such that 
\[\E\left( \sum_{|u|=1} \ind{V(u) \geq -a} \right) > \rho.\]
We consider the branching-selection process that starts at time $n-k$ with the population of the $(n-k)^\mathrm{th}$ generation of $\check{\T}^{(n)}$, in which individuals reproduce independently according to the law $\calL$, with the following selection process: an individual is erased if it belongs to generation $n-k+j$ and is below $n^{1/3}f_{(n-k)/n} - j a$, or if it is not one of the $e^{n^{1/3}h_{(n-k+j)/n}}$ rightmost individuals. By Lemma \ref{lem:coupling}, this branching-selection process stays at any time $n-k \leq j \leq n$ below $(\T^h_{(n)},V)$ for the order $\preccurlyeq$. Moreover, by definition, the leftmost individual alive at time $n$ is above $n^{1/3}(f_{(n-k)/n}-\epsilon- a \zeta)$.

We now bound the size of the population in this process. We write $(X_j, j \in \N)$ for a sequence of i.i.d. random variables with the same law as $ \sum_{|u|=1} \ind{V(u) \geq -a}$. By Cramér's theorem, there exists $\lambda > 0$ such that for any $n \in \N$, we have
\[
  \P\left( \sum_{k=1}^n X_j \leq n\rho \right) \leq e^{-\lambda n}.
\]
Consequently, the probability that there exists $j \in [n-k,n]$ such that the size of the population at time $j$ in the branching-selection process is less than $\min\left(\rho^{k+j-n} e^{(h_{(n-k)/n}-\xi)n^{1/3}}, e^{h_{j/n} n^{1/3}}\right)$ decays exponentially fast with $n$. Applying the Borel-Cantelli lemma, for any $\zeta>0$, there exists $\xi>0$ such that almost surely for $n \geq 1$ large enough, the number of individuals alive at generation $n$ in the bounding branching-selection process is $\floor{e^{h_1 n^{1/3}}}$. On this event, $m^h_n$ is greater than the minimal position in this process. We conclude, letting $n$ grows to $+\infty$ then $\epsilon$ and $\zeta$ decrease to $0$ that
\[
  \liminf_{n \to +\infty} \frac{m^h_n}{n^{1/3}} \geq h_0 - h_1 - \frac{\pi^2 \sigma^2}{2}\int_0^1 \frac{ds}{h_s^2} \quad \mathrm{a.s.}
\]
completing the proof of Theorem \ref{thm:Thbehaviour}.
\end{proof}

An application of Theorem \ref{thm:Thbehaviour} leads to Theorem \ref{thm:main}.
\begin{proof}[Proof of Theorem \ref{thm:main}]
Let $a > 0$, we denote by $\phi : n \mapsto \floor{e^{a n^{1/3}}}$ and by $(\T^\phi,V)$ the branching random walk with selection of the $\phi(n)$ rightmost individuals at generation $n$. For $n \in \N$ we write
\[
  M^\phi_n = \max_{ u \in \T^\phi, |u| = n} V(u) \quad \mathrm{and} \quad m^\phi_n = \min_{u \in \T^\phi, |u| = n} V(u).
\]

Let $\epsilon>0$ and $n \in \N$, we set $k = \floor{n\epsilon}$ and $h : t \mapsto a(t+\epsilon)^{1/3}$. We note that by Lemma \ref{lem:coupling}, for any two continuous non-negative functions $h_1 \leq h_2$, and $k \leq n$ we have
\[
  \sum_{\substack{u \in \T^{h_1}_{(n)} \\ |u| = k}} \delta_{V(u)} \preccurlyeq \sum_{\substack{u \in \T^{h_2}_{(n)}\\ |u| =k}} \delta_{V(u)}.
\]
As a consequence, for any $n \in \N$ and $\epsilon>0$, we couple the branching random walk with selection $(\T^\phi,V)$ with two branching random walks with selection $(\T_{(n)}^{h,+},V)$ and $(\T_{(n)}^{h,-},V)$ in a way that
\begin{equation}
  \label{eqn:couplingidiot}
  \sum_{\substack{u \in \T_{(n)}^{h,-}\\ |u| = n-k}} \delta_{V(u) + m^\phi_k} \preccurlyeq \sum_{\substack{u \in \T^\phi\\ |u| = n}} \delta_{V(u)} \preccurlyeq \sum_{\substack{u \in \T_{(n)}^{h,-}\\ |u| = n-k}} \delta_{V(u) + M^\phi_k},
\end{equation}
using the fact that the population at time $k$ in $\T^\phi$ is between $m^\phi_k$ and $M^\phi_k$.

Applying Theorem \ref{thm:Thbehaviour}, we have
\[
  \limsup_{n \to +\infty} \frac{M^\phi_n - M^\phi_k}{n^{1/3}} \leq \limsup_{n \to +\infty} \frac{M^h_{n-k}}{n^{1/3}} \leq a\epsilon^{1/3} - \frac{\pi^2 \sigma^2}{2} \int_0^{1-\epsilon} \frac{ds}{(a(s+\epsilon)^{1/3})^2} \quad \mathrm{a.s.}
\]
as well as
\[
  \liminf_{n \to +\infty} \frac{m^\phi_n - m^\phi_k}{n^{1/3}} \geq \liminf_{n\to +\infty} \frac{m^h_{n-k}}{n^{1/3}} \geq - a - \frac{\pi^2 \sigma^2}{2} \int_0^{1-\epsilon} \frac{ds}{(a(s+\epsilon)^{1/3})^2} \quad \mathrm{a.s.}
\]
As $\lim_{\epsilon \to 0} \int_0^{1-\epsilon} \frac{ds}{(a(s+\epsilon)^{1/3})^2} = \frac{3}{a^2}$, for any $\delta > 0$, for any $\epsilon>0$ small enough we have
\[
  \limsup_{n \to +\infty} \frac{M^\phi_n - M^\phi_\floor{\epsilon n}}{n^{1/3}} \leq - \frac{3\pi^2 \sigma^2}{2a^2} + \delta  \quad \mathrm{a.s.}
\]
We set $p = \floor{- \frac{\log n}{\log \epsilon}}$, and observe that
\begin{align*}
  \frac{M^\phi_n}{n^{1/3}} &= \frac{1}{n^{1/3}} \sum_{j=0}^{p-2} \left(M^\phi_\floor{\epsilon^j n} - M^\phi_{\floor{\epsilon^{j+1} n}} \right) + \frac{M^\phi_\floor{\epsilon^{p-1} n}}{n^{1/3}}\\
  &\leq \sum_{j=0}^{p-2} \epsilon^{j/3} \frac{M^\phi_\floor{\epsilon^j n} - M^\phi_\floor{\epsilon^{j+1} n}}{(\epsilon^j n)^{1/3}} + \frac{\sup_{j \leq \epsilon^{-2}} M^\phi_j}{n^{1/3}}.
\end{align*}
Using a straightforward adaptation of the Cesàro lemma, we obtain
\[
  \limsup_{n \to +\infty} \frac{M^\phi_n}{n^{1/3}} \leq \frac{- \frac{3\pi^2 \sigma^2}{2a^2} + \delta}{1 - \epsilon^{1/3}}  \quad \mathrm{a.s.}
\]
Letting $\epsilon \to 0$ then $\delta \to 0$ we have
\begin{equation}
  \label{eqn:ubub}
  \limsup_{n \to +\infty} \frac{M^\phi_n}{n^{1/3}} \leq - \frac{3\pi^2 \sigma^2}{2 a^2} \quad \mathrm{a.s.}
\end{equation}

Similarly, for any $\delta > 0$, for any $\epsilon>0$ small enough we have
\[
  \liminf_{n \to +\infty} \frac{m^\phi_n - m^\phi_\floor{\epsilon n}}{n^{1/3}} \geq -a - \frac{3\pi^2 \sigma^2}{2a^2} - \delta  \quad \mathrm{a.s.}
\]
Setting $p = \floor{- \frac{\log n}{\log \epsilon}}$ and observing that
\[
  \frac{m^\phi_n}{n^{1/3}} \geq \sum_{j=0}^{p-2} \epsilon^{j/3} \frac{m^\phi_\floor{\epsilon^j n} - m^\phi_\floor{\epsilon^{j+1} n}}{(\epsilon^j n)^{1/3}} + \frac{\inf_{j \leq \epsilon^{-2}} m^\phi_j}{n^{1/3}},
\]
we use again the Cesàro lemma to obtain, letting $\epsilon$ then $\delta$ decrease to 0,
\begin{equation}
  \label{eqn:lblb}
  \liminf_{n \to +\infty} \frac{m^\phi_n}{n^{1/3}} \geq - a - \frac{3\pi^2 \sigma^2}{2 a^2} \quad \mathrm{a.s.}
\end{equation}

To obtain the other bounds, we observe that \eqref{eqn:couplingidiot} also leads to
\[
  \liminf_{n \to +\infty} \frac{M^\phi_n}{n^{1/3}} \geq \liminf_{n \to +\infty} \frac{M^h_{n-k}+ m^\phi_k}{n^{1/3}} \geq - \frac{\pi^2 \sigma^2}{2a^2} \int_0^{1-\epsilon} \frac{ds}{(s + \epsilon)^{2/3}} -\left(a + \frac{3\pi^2 \sigma^2}{2 a^2} \right) \epsilon^{1/3} \quad \mathrm{a.s.}
\]
by Theorem \ref{thm:Thbehaviour} and \eqref{eqn:lblb}. Letting $\epsilon \to 0$ we have
\[
  \liminf_{n \to +\infty} \frac{M^\phi_n}{n^{1/3}} \geq - \frac{3\pi^2 \sigma^2}{2a^2} \quad \mathrm{a.s.}
\]
Similarly, we have
\[
  \limsup_{n \to +\infty} \frac{m^\phi_n}{n^{1/3}} \leq \limsup_{n \to +\infty} \frac{m^h_{n-k}+ M^\phi_k}{n^{1/3}} \leq -a - \frac{\pi^2 \sigma^2}{2a^2} \int_0^{1-\epsilon} \frac{ds}{(s + \epsilon)^{2/3}} \quad \mathrm{a.s.}
\]
using Theorem \ref{thm:Tfbehaviour} and \eqref{eqn:ubub}. We let $\epsilon \to 0$ to obtain
\[
  \limsup_{n \to +\infty} \frac{m^\phi_n}{n^{1/3}} \leq - a - \frac{3\pi^2 \sigma^2}{2 a^2} \quad \mathrm{a.s.}
\]
\end{proof}

The careful reader will notice that, for almost any $a \in \R$ there exist $\bar{a} \neq a$ such that
\[
  a + \frac{3\pi^2 \sigma^2}{2a^2} = \bar{a} + \frac{3 \pi^2 \sigma^2}{2 \bar{a}^2}.
\]
With these notation, both the branching random walk with selection of the $e^{a n^{1/3}}$ rightmost individuals at generation $n$ and the branching random walk with selection of the $e^{\bar{a}n^{1/3}}$ rightmost ones are coupled, between times $\epsilon n$ and $n$ with branching random walks with the same killing barrier
\[
  f : t \in [\epsilon, 1] \mapsto  \left(a + \frac{3\pi^2 \sigma^2}{2a^2}\right) t^{1/3},
\]
the difference between the processes being the number of individuals initially alive in the processes, respectively $e^{a (\epsilon n)^{1/3}}$ and $e^{\bar{a} (\epsilon n)^{1/3}}$.

\paragraph*{Acknowledgements.} I would like to thank Zhan Shi for having started me on this topic and for his constant help, as well as Shi Wanlin for pointing out typos in the previous versions.

\nocite{*}
\bibliographystyle{plain}\def\cprime{$'$}

\end{document}